\pgfplotsset{compat=1.18}
\crefname{equation}{}{} 
\crefname{enumi}{}{} 
\definecolor{mblue}{rgb}{         0    0.4470    0.7410}
\definecolor{mred}{rgb}{    0.8500    0.3250    0.0980}  
\definecolor{myellow}{rgb}{    0.9290    0.6940    0.1250}
\theoremstyle{plain}
\newtheorem{lemma}{Lemma}[section]
\newtheorem{proposition}[lemma]{Proposition}
\newtheorem{theorem}[lemma]{Theorem}
\newtheorem*{idea*}{Idea}
\theoremstyle{definition}
\newtheorem{definition}[lemma]{Definition}
\theoremstyle{remark}
\newtheorem{remark}[lemma]{Remark}
\numberwithin{equation}{section}
\newcommand{\loc}{\mathrm{loc}}
\newcommand{\R}{\mathbb{R}}
\newcommand{\N}{\mathbb{N}}
\newcommand{\eps}{{\varepsilon}}
\DeclareMathOperator{\supp}{supp}
\DeclareMathOperator{\dive}{div}
\DeclareMathOperator*{\esssup}{ess\,\sup}
\newcommand{\wstarlim}{\overset{\ast}\rightharpoonup}
\newcommand{\wlim}{\rightharpoonup}
\newcommand{\pt}{\partial_t}
\newcommand{\abs}[1]{\left\lvert#1\right\rvert}
\newcommand{\norm}[1]{\left\lVert#1\right\rVert}
\newcommand\skal[1]{\left\langle#1\right\rangle}
\renewcommand{\div}{\operatorname{div}}
\newcommand{\dd}{\,\mathrm{d}}
\newcommand{\ie}{{\itshape i.e.}}
\newcounter{eq}[section]
\title[Inhomogeneous 2D Navier--Stokes equations without vacuum]{Inhomogeneous 2D Navier--Stokes equations: Existence, uniqueness, stability, continuity in time and energy conservation of weak solutions}
\date{}
\author[S. \v{S}kondri\'c]{Stefan \v{S}kondri\'c}
\address[S. \v{S}kondri\'c]{Chair of Analysis, Department of
Mathematics, Friedrich-Alexander-Universität Erlangen-Nürnberg, Cauerstraße 11, 91058 Erlangen, Germany.}
\email{stefan.skondric@fau.de}
\begin{document}

\begin{abstract}
We present a novel and direct proof of the existence and uniqueness of weak solutions of the inhomogeneous incompressible Navier--Stokes equations without vacuum.  The analysis we employ to prove the strong convergence of the approximating sequence, which is based on the relative energy method, reveals how to conclude the stability and uniqueness of weak solutions. To the best of our knowledge, these global-in-time stability estimates are completely new. Furthermore, for the first time, we establish energy conservation for weak solutions. 
\end{abstract}

\maketitle

\section{Introduction}\label{sec:section1} 

\subsection{Notion of solution and main result}

In this paper, we consider the \emph{inhomogeneous incompressible Navier--Stokes equations}
\begin{align}\label{eq:ins}
\begin{cases}
\pt (\rho u) +\div (\rho u \otimes u)-\nu \Delta u+\nabla P=0, &  (t,x) \in (0,\infty) \times \R^d, \\
\pt \rho +\div (\rho u )=0, & (t,x) \in (0,\infty) \times \R^d, \\
\operatorname{div} u=0, & (t,x) \in (0,\infty) \times \R^d, \\
\rho(0,x) = \rho_0(x), & x \in \R^d,\\
u(0,x) = u_0(x), & x \in \R^d.
\end{cases}
\end{align} 
System \cref{eq:ins} describes the evolution of an incompressible fluid with variable \emph{density} $\rho \colon [0,\infty) \times \R^d \to [0,\infty)$, \emph{velocity} $u \colon [0,\infty) \times \R^d \to \R^d$ and \emph{pressure} $P \colon [0,\infty) \times \R^d \to \R$. The constant $\nu>0$ denotes the viscosity of the fluid. 

Recall that a sufficiently smooth and integrable solution $(\rho,u,P)$ of \cref{eq:ins}, with initial data $(\rho_0,u_0)$, satisfies the energy equality, \ie, for every $t>0$, we have 
\begin{align}\label{eq:energyeq}
    \frac{1}{2} \int_{\R^d} \rho(t) |u(t)|^2 \dd x + \nu \int_0^t \int_{\R^d} |\nabla u(s)|^2 \dd x \dd s = \frac{1}{2} \int_{\R^d} \rho_0|u_0|^2 \dd x.
\end{align}

In this paper, we work in two spatial dimensions, \ie, $d=2,$ and we make the following assumptions on the initial condition
\begin{equation}\label{ass:data1}
\begin{aligned} 
    & 0 < c_0 \leq \rho_0(x) \leq C_0 < \infty \quad \text{for a.e.~$x \in \R^2$}, \quad u_0 \in  L^2_\sigma(\R^2).
\end{aligned}
\end{equation}
Here and in the following, for $p \in [1,\infty)$, $L^p_\sigma(\R^2)$ denotes the space of $p$-integrable weakly divergence-free vector fields. This choice of initial data allows us to study the solvability of \cref{eq:ins} at a critical level of regularity and to make sense of the energy equality \eqref{eq:energyeq} since \eqref{ass:data1} ensures that $\sqrt{\rho_0} u_0 \in L^2(\R^2)$. 
Based on these observations, we define weak solutions as follows.

\begin{definition}[Leray--Hopf weak solution]\label{def:lerayhopfsol}
We say that a pair $(\rho,u)$ is a \textit{Leray--Hopf} weak solution of \cref{eq:ins} with initial data $(\rho_0,u_0)$ 
satisfying \cref{ass:data1} if
    \begin{itemize}
        \item[(i)] The solution satisfies
        \begin{align*}
           & \sqrt{\rho} u \in L^\infty((0,\infty);\,L^2(\R^2)),
            \quad \rho \in L^\infty((0,\infty)\times \R^2),
            \\ &  u \in L^2_{\loc}([0,\infty)\times\R^2)  \quad \text{and} \quad \nabla u \in L^2((0,\infty) \times \R^2); 
        \end{align*}
        \item[(ii)] The pair $(\rho,u)$ is a distributional solution of \cref{eq:ins}:
        \smallbreak
        \begin{itemize}
            \item[$\bullet$] For every  $\varphi \in C_c^\infty([0,\infty) \times \R^2; \R)$,
        \begin{align*}
             \int_0^\infty \int_{\R^2} \rho \partial_s \varphi \dd x \dd s
            +  \int_0^\infty \int_{\R^2} \rho u \cdot \nabla \varphi \dd x \dd s 
            =- \int_{\R^2} \rho_0 \varphi(0) \dd x;
        \end{align*}
        \item[$\bullet$] For every $\varphi \in C_{c,\sigma}^\infty([0,\infty) \times \R^2;\R^2)$,
        \begin{align*}
            \int_0^\infty \int_{\R^2} &\rho \partial_s \varphi \cdot u+
            \rho u \otimes u : \nabla \varphi \dd x \dd s  
            - \nu \int_0^\infty \int_{\R^2} \nabla u : \nabla \varphi \dd x \dd s = -\int_{\R^2} \rho_0 u_0 \cdot \varphi(0)\dd x;
        \end{align*}
        \item[$\bullet$]For every  $\varphi \in C_c^\infty([0,\infty) \times\R^2;\R)$,
        \begin{equation*}
            \int_0^\infty\int_{\R^2} u\cdot \nabla \varphi \dd x\dd s =0;
        \end{equation*}
  \end{itemize}
         \item[(iii)] The energy inequality holds: for every $t>0,$
        \begin{align*}
           \frac{1}{2} \int_{\R^2} \rho(t) |u(t)|^2 \dd x + \nu \int_0^t \int_{\R^2} |\nabla u(s)|^2 \dd x \dd s \leq 
           \frac{1}{2} \int_{\R^2} \rho_0|u_0|^2 \dd x.
        \end{align*}
    \end{itemize}
\end{definition}

\begin{remark}\label{rmk:novacuum}
    In \cite{CrinBaratSkondricViolini2024}, it was shown that the bounds in \eqref{ass:data1} are preserved over time, \ie, for every $t \in (0,\infty),$ we have that 
    \begin{equation*}
        0 < c_0 \leq \rho(t,x) \leq C_0 < \infty \quad \text{for a.e.~$x \in \R^2$}.
    \end{equation*}
\end{remark}

In the case of a constant density, \ie, when $\rho \equiv 1,$ the existence, uniqueness and regularity has been known for a long time, see \cite{Leray1934,Ladyzhenskaya1959,LionsProdi1959}. A prototypical result summarizing this then reads as.

\begin{theorem}[Existence and uniqueness for the two-dimensional homogeneous Navier-Stokes system]\label{thm:globalwellposedness2d}
    For every $u_0 \in L^2_\sigma(\R^2)$, the two-dimensional homogeneous Navier-Stokes equations admit unique global-in-time Leray--Hopf solution $u$ which satisfies the energy equality and we have
\begin{align}\label{eq:smoothness2d}
        u \in C([0,\infty);L^2_\sigma(\R^2)) \cap C^\infty((0,\infty) \times \R^2).
    \end{align}
\end{theorem}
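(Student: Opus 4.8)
The plan is to run the classical three–step program for the two–dimensional homogeneous system: construct a solution by a compactness argument, upgrade the energy inequality to an equality by exploiting the extra integrability available in dimension two, and then obtain uniqueness and smoothness from energy estimates combined with a parabolic bootstrap. For existence I would mollify the datum $u_0\in L^2_\sigma(\R^2)$ and run a Galerkin (or Friedrichs spectral–truncation) scheme for the divergence–free system; the a priori energy identity gives the approximations $u_N$ bounded in $L^\infty((0,\infty);L^2_\sigma(\R^2))\cap L^2((0,\infty);\dot H^1(\R^2))$ uniformly in $N$. Using the equation to bound $\pt u_N$ in a negative–order space and invoking the Aubin--Lions--Simon lemma on bounded space–time cylinders yields strong $L^2_{\loc}$ convergence of a subsequence, which is enough to pass to the limit in the quadratic term $u_N\otimes u_N$; weak lower semicontinuity of the norms then gives the energy inequality, so the limit is a Leray--Hopf solution.

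The next point is the energy equality. In $d=2$ the Ladyzhenskaya inequality $\norm{f}_{L^4(\R^2)}^2\lesssim\norm{f}_{L^2(\R^2)}\norm{\nabla f}_{L^2(\R^2)}$ shows that every Leray--Hopf solution lies in $L^4((0,T)\times\R^2)$, so the convective term makes sense tested against functions in $L^2_t\dot H^1_x$; in particular, after a standard mollification–in–time (Lions--Magenes type) argument $u$ itself becomes an admissible test function, and the commutator terms produced by the regularization vanish in the limit because $u\in L^4_{t,x}$. This is exactly the borderline case in which the Leray--Hopf class is strong enough to be energy conserving, and carrying out the regularization carefully yields \eqref{eq:energyeq} for every $t>0$.

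For uniqueness let $u_1,u_2$ be two Leray--Hopf solutions with the same datum and set $w=u_1-u_2$. Testing the difference of the equations with $w$, using $\div u_i=0$ to kill two of the trilinear terms, leaves only $\int_{\R^2}(w\cdot\nabla)u_2\cdot w\dx$, which by Ladyzhenskaya and Young is controlled by $\tfrac{\nu}{2}\norm{\nabla w}_{L^2}^2+C\nu^{-1}\norm{\nabla u_2}_{L^2}^2\norm{w}_{L^2}^2$. Absorbing the gradient term and applying Grönwall with the integrable weight $t\mapsto\norm{\nabla u_2(t)}_{L^2}^2\in L^1(0,\infty)$ forces $w\equiv0$.

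Finally, smoothness. Testing the equation with $-\Delta u$, noting that the pressure term drops since $\int\nabla P\cdot\Delta u\,\dx=-\int P\,\Delta(\div u)\,\dx=0$, and estimating $\abs{\int(u\cdot\nabla)u\cdot\Delta u\,\dx}$ again by Ladyzhenskaya yields, after absorption, a differential inequality of the form
\[
\tfrac{d}{dt}\norm{\nabla u}_{L^2}^2+\nu\norm{\Delta u}_{L^2}^2\le C\norm{\nabla u}_{L^2}^4 ,
\]
and since $\norm{\nabla u(\cdot)}_{L^2}^2$ already belongs to $L^1(0,\infty)$, one obtains $u\in L^\infty_{\loc}((0,\infty);H^1)\cap L^2_{\loc}((0,\infty);H^2)$, the pressure being recovered from $-\Delta P=\div\div(u\otimes u)$. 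From this starting point one bootstraps: viewing the equations as a Stokes system with right–hand side $-(u\cdot\nabla)u$ of successively higher regularity, parabolic maximal regularity (equivalently, analyticity of the Stokes semigroup on $\R^2$) upgrades $u$ to $H^k_{\loc}((0,\infty)\times\R^2)$ for every $k$, and Sobolev embedding gives \eqref{eq:smoothness2d}. The main obstacle I anticipate is precisely this bootstrap — one must carry the positive–time weights through each step so that all estimates hold on all of $(0,\infty)$ and not merely on a short interval; by comparison the existence and uniqueness steps are routine once the two–dimensional interpolation inequality is in hand.
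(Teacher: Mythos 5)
Your proposal is correct and follows exactly the route the paper intends for this statement: the theorem is quoted as classical background (with references to Leray, Ladyzhenskaya and Lions--Prodi) rather than proved, and your outline --- Galerkin compactness for existence, the two-dimensional $L^4_{t,x}$ borderline to upgrade the energy inequality to an equality, Ladyzhenskaya plus Gr\"onwall for uniqueness (which is precisely the computation the paper itself sketches for the homogeneous case in its introduction), and the $\dot H^1$ estimate with a Stokes-regularity bootstrap for smoothness --- is the standard argument those references rely on. At the level of detail appropriate for this classical statement I see no gap.
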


In the case of variable densities, the existence results of Leray--Hopf solutions, in the absence of vacuum, date back to \cite{Kazhikhov1974} and, in the presence of vacuum, to \cite{Simon1990}. A wide variety of existence results can be found in \cite{LionsVol11996}. Many possible configurations of vacuum, unbounded densities and even the case of variable viscosity were covered. Nevertheless, the uniqueness of Leray--Hopf solutions, sometimes referred to as Lions' problem, was open for many years which was eventually resolved in \cite{HaoShaoWeiZhang2024} in the absence of vacuum.
\smallbreak
In this paper we give a new proof an analogue of \cref{thm:globalwellposedness2d} for the system \cref{eq:ins} provided that the initial data $(\rho_0,u_0)$ satisfy \cref{ass:data1}. Before we can state our main result, we need the following definition.

\begin{definition}[Immediately strong solutions]\label{def:Immstrongsol}
 We say that a Leray--Hopf solution \( (\rho, u) \) with initial data \( (\rho_0, u_0) \) satisfying \cref{ass:data1} is an \textit{immediately strong solution} to \cref{eq:ins} if the following quantities are finite:
\begin{align}\label{def:IS}
\begin{aligned}
    & A^0_0(\rho,u) \coloneqq \hspace{-0.3cm} && \esssup_{s \in (0, \infty)} \int_{\R^2} \rho |u|^2 \dd x + \int_0^\infty \int_{\R^2} |\nabla u|^2 \dd x \dd s\\
    & A^0_1(\rho, u) \coloneqq \hspace{-0.3cm} && \esssup_{s \in (0, \infty)} s \int_{\R^2} |\nabla u|^2 \dd x + \int_0^\infty s \int_{\R^2} \left(\rho|\partial_s u|^2 + \rho |\dot{u}|^2 + |\nabla^2 u|^2 + \abs{\nabla P}^2 \right) \dd x \dd s,\\
    & A^0_2(\rho,u) \coloneqq \hspace{-0.3cm} && \esssup_{s \in (0, \infty)} s^2 \int_{\R^2}  \left(\rho|\partial_s u|^2 + \rho|\dot{u}|^2 + |\nabla^2 u|^2 + \abs{\nabla P}^2\right) \dd x \\
    & \hspace{-0.2cm} && + \int_0^\infty s^2 \int_{\R^2}  \left( |\partial_s \nabla u|^2 + |\nabla \dot{u}| \right) \dd x \dd s,\\
    &  A^0_3(\rho,u) \coloneqq \hspace{-0.3cm}  && \esssup_{s \in (0, \infty)} s^3 \int_{\R^2}  |\nabla \dot{u}|^2 \dd x + \int_0^\infty s^3\int_{\R^2} \left( |\nabla^2 \dot{u}|^2 + |\nabla \dot{P}|^2 + \rho |\ddot{u}|^2  \dd x \right) \dd s.
    \end{aligned}
\end{align}
\end{definition}

Here and in the following, we use the notation $\dot{w} \coloneqq D_v w \coloneqq \partial_t w + (v \cdot \nabla) w$ for the material derivative of vector field or scalar field $w$ with respect to a vector field $v.$ Since we only consider the case $w=\rho$ and $v=u$ or $u=v=w,$ we only write $\dot{\rho}$ and $\dot{u}$ instead of $D_u \rho$ and $D_u u$, respectively. 

Let $E \coloneqq L^\infty(0,\infty;L^2(\R^2)) \cap L^2(0,\infty; \dot{H}^1(\R^2))$ be equipped with the norm
\begin{align*}
    \norm{u}_E^2 \coloneqq \sup_{t \in (0,\infty)} \norm{u(t)}^2_2 + \nu \int_0^\infty \norm{\nabla u(s)}^2_2 \dd s, \quad u \in E.
\end{align*}
Furthermore, we introduce the following energy space
\begin{align}
    Z \coloneqq \{ (\rho,u) \in L^\infty \times E : A^0_0(\rho,u), A^0_1(\rho,u), A^0_2(\rho,u)\text{ are finite}\},
\end{align}
and the corresponding norm\footnote{Observe that the quantity $A_3$ does not play any role in the definition of $Z$ or $\norm{\cdot}_Z$, so all the techniques we use in the following could be adopted to models where only quantities like $A^0_0,\dots,A^0_2$ are bounded.}
\begin{align}
    \norm{(\rho,u)}_Z \coloneqq \sup_{i=0,\dots,2} A^0_i(\rho,u)^{1/2}.
\end{align}

Immediately strong solutions are quite natural objects and for each pair $(\rho_0, u_0)$, satisfying \cref{ass:data1}, there exists at least one immediately strong solution $(\rho, u),$ see \cite[Theorem 1.1]{Danchin2024}. The time-weighted functionals for solutions of \eqref{eq:ins} where first introduced by Paicu, Zhang and Zhang in \cite{PaicuZhangZhang2013}. They originally used the following functionals:
\begin{align*}
    & A^\eta_1(t, \rho, u) \coloneqq \sigma(t)^{1-\eta} \int_{\R^2} |\nabla u|^2 \dd x + \int_0^t \sigma(s)^{1-\eta} \int_{\R^2} \left(\rho |\dot{u}|^2 + |\nabla^2 u|^2 + \abs{\nabla P}^2 \right) \dd x \dd s,\\
    & A^\eta_2(t, \rho,u) \coloneqq \sigma(t)^{2-\eta} \int_{\R^2}  \left(\rho|\dot{u}|^2 + |\nabla^2 u|^2 + \abs{\nabla P}^2\right) \dd x + \int_0^t \sigma(s)^{2-\eta} \int_{\R^2}  \left(|\nabla \dot{u}|^2 \right) \dd x \dd s,
\end{align*}
where $\eta \in (0,1]$ and $\sigma(t) \coloneqq \min \{t,1\}$, and showed that, for every $u_0 \in H^\eta(\R^2)$,
\begin{align*}
     \esssup_{t \in (0, \infty)} A^\eta_1(t, \rho, u), A^\eta_1(t, \rho, u) \lesssim \norm{u_0}_{H^\eta(\R^2)}.
\end{align*}

In contrast to the case when the density is constant, there is not reason to believe that the velocity of an immediately becomes smooth in time. In fact, this might be wrong if the density is only in $L^\infty(\R^2).$ The reason why it is possible to get higher order estimates as in \cref{def:Immstrongsol} is that $\dot{\rho} = \partial_t \rho + u \cdot \nabla \rho = 0$ which allows to differentiate \cref{eq:ins} with respect to the material derivative. Now we can state the main result.

\begin{theorem}\label{thm:mainthmrefined}
    Let $(\rho_0,u_0)$ satisfy \cref{ass:data1}. There exists a Leray--Hopf solution $(\rho,u)$ with respect to the initial data $(\rho_0,u_0)$ which satisfies the following properties:
    \begin{enumerate}[label=(\roman*)]
        \item \label{it:imstrong} The solution $(\rho,u)$ is immediately strong in sense of \cref{def:Immstrongsol};
        \item \label{it:continzero} We have $\rho \in C_{w^*}([0,\infty);L^\infty(\R^2))$ and $\rho u, \sqrt{\rho} u, u \in C([0,\infty);L^2(\R^2))$;
        \item \label{it:energyequality} The solution $(\rho,u)$ satisfies the energy equality;
        \item  \label{it:stability} Given a Leray--Hopf solution $(\varrho,v)$ with respect to the initial data $(\rho_0,v_0)$, with $v_0 \in L^2_\sigma(\R^2)$, then there exists a $C>0$, with
        \begin{align*}
            C=C(\nu, c_0, C_0,\norm{u_0}_2),
        \end{align*}
        such that 
        \begin{align}\label{eq:stabilityintro}
            \sup_{t \in (0,\infty)} \norm{(u-v)(t)}_2^2 + \nu \int_0^\infty \norm{\nabla (u-v)(t)}_2^2 \dd t \leq C \norm{u_0-v_0}_2^2.
        \end{align}
    \end{enumerate}
    In particular, if $u_0=v_0$, then $\rho=\varrho$ and $u=v$ almost everywhere on $(0,\infty) \times \R^2.$ Thus, the solution $(\rho,u)$ is unique in the class of Leray--Hopf solutions.
\end{theorem}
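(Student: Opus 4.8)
The plan is to prove the theorem in three stages: first construct a weak solution by approximation, then establish the stability estimate \eqref{eq:stabilityintro} for two Leray--Hopf solutions, and finally harvest uniqueness and the remaining qualitative properties as corollaries. For the existence part, I would regularize the initial data by mollifying $u_0$ and $\rho_0$ (keeping the bounds $c_0 \le \rho_0^\eps \le C_0$), invoke a known global strong-solution theory — e.g.\ the results of \cite{DanchinWang2023,Danchin2024} or the construction underlying \cref{def:Immstrongsol} — to get smooth approximants $(\rho^\eps, u^\eps, P^\eps)$ that satisfy the energy equality \eqref{eq:energyeq} and, crucially, the parabolic smoothing bound \eqref{eq:L2intimelip}, i.e.\ $\int_0^t s \norm{\nabla u^\eps(s)}_\infty^2 \dd s < \infty$ with constants depending only on $\nu, c_0, C_0, \norm{u_0}_2$. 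From the energy equality one extracts uniform bounds on $\sqrt{\rho^\eps} u^\eps$ in $L^\infty_t L^2_x$, on $\nabla u^\eps$ in $L^2_{t,x}$, and on $\rho^\eps$ in $L^\infty_{t,x}$; these give weak-$\ast$ and weak limits $(\rho, u)$ along a subsequence. The heart of the existence argument is upgrading this to strong convergence of $u^\eps$ in $L^2_{\loc}$, which I would do \emph{not} by Aubin--Lions compactness alone (the density term is delicate) but by running the relative-energy/stability argument between two approximants $(\rho^\eps, u^\eps)$ and $(\rho^{\eps'}, u^{\eps'})$ — this is the ``analysis that reveals how to conclude stability'' advertised in the abstract.

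For the stability estimate itself, which is also the engine of uniqueness, I would follow the scheme sketched in the introduction: write the equation \eqref{eq:diffmomentuminhomogintro} for $\delta u = u_1 - u_2$, test against $\delta u$ to obtain the relative energy identity \eqref{eq:energydiffinhomog}, and control the bad term \eqref{eq:badterm} $\int_0^t\int_{\R^2} \delta\rho\, \dot u_2 \cdot \delta u$ using the transport representation \eqref{eq:formulatransintro} for $\delta\rho$ together with the $W^{-1,4}$-type product estimate \eqref{eq:productestimateintro}. The key point — following \cite{HaoShaoWeiZhang2024} — is that one should place the density difference in $W^{-1,4}$ and use the refined smoothing bound \eqref{eq:L2intimelip} on the \emph{regular} solution $u_2$ (taken to be the one with the extra regularity, i.e.\ an approximant, or the immediately-strong solution once it is constructed) rather than the non-integrable $L^1$-in-time Lipschitz bound \eqref{eq:L1intimelip}. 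Concretely, I expect the chain: $\norm{\delta\rho(t)}_{W^{-1,4}} \lesssim \int_0^t \norm{(\rho_1 \delta u)(s)}_4 \dd s \lesssim \int_0^t \norm{\delta u(s)}_2^{1/2}\norm{\nabla\delta u(s)}_2^{1/2}\dd s$ via Ladyzhenskaya, then pairing against $\nabla(\dot u_2 \cdot \delta u)$ and absorbing half a gradient of $\delta u$, with the remaining factor $s\norm{\nabla u_2}_\infty^2$ (plus $\norm{\sqrt{\rho_1}\dot u_2}_2^2$, controlled by the energy of $\dot u_2$) playing the role of the $L^1$ Grönwall weight $f(s)$. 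Grönwall then yields $\sup_t \norm{\sqrt{\rho_1}\delta u}_2^2 + \nu\int_0^\infty \norm{\nabla\delta u}_2^2 \lesssim \norm{\delta u(0)}_2^2$, and since $\rho_1 \ge c_0$ this gives \eqref{eq:stabilityintro} with $C$ depending only on $\nu, c_0, C_0, \norm{u_0}_2$.

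With stability in hand the rest falls out quickly. Applying \eqref{eq:stabilityintro} to the pair $(\rho^\eps, u^\eps), (\rho^{\eps'}, u^{\eps'})$ shows $(u^\eps)$ is Cauchy in $L^\infty_t L^2_x \cap L^2_t \dot H^1_x$, hence converges strongly; passing to the limit in the weak formulation (the strong convergence of $u^\eps$ and the weak-$\ast$ convergence of $\rho^\eps$ suffice to pass to the limit in $\rho^\eps u^\eps \otimes u^\eps$ after noting $\rho^\eps u^\eps \to \rho u$ strongly in $L^2_{\loc}$) produces a Leray--Hopf solution $(\rho, u)$, which inherits \eqref{eq:L2intimelip} from the uniform approximant bounds and so is immediately strong, giving \ref{it:imstrong}. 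The time-continuity statements in \ref{it:continzero} follow from the equations: $\rho \in C_{w^*}$ from the transport equation and the $L^\infty$ bound, and $\rho u, u \in C_t L^2$ from the momentum equation plus strong $L^2$ convergence and the standard trick of combining weak continuity with continuity of the norm (the energy equality). The energy equality \ref{it:energyequality} passes to the limit because the energy \emph{equality} for each $u^\eps$ plus strong convergence forces no energy loss — the dissipation term converges by lower semicontinuity on one side and the convergence of initial energies pins it from the other. Finally uniqueness, and hence \ref{it:stability} for an arbitrary second Leray--Hopf solution $(\varrho, v)$ with data $(\rho_0, v_0)$, follows by applying the stability argument with $u_2$ = the immediately-strong solution just constructed (which has \eqref{eq:L2intimelip}) and $u_1$ = the given weak solution; taking $v_0 = u_0$ gives $\delta u \equiv 0$. \textbf{The main obstacle} I anticipate is making the relative-energy computation between $u_1$ (only a Leray--Hopf solution, no extra regularity) and the regular solution $u_2$ rigorous: the identity \eqref{eq:energydiffinhomog} and the representation \eqref{eq:formulatransintro} are only formal for a weak $u_1$, so one needs a careful mollification/approximation argument — testing the weak formulation of $u_1$ against a regularized $\delta u$, controlling commutators in the transport equation for $\delta\rho$ à la DiPerna--Lions, and verifying that all error terms vanish using only the Leray--Hopf regularity of $u_1$ and the strong bounds on $u_2$. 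This is where the transport theory recalled in \cref{sec:section3} and the precise form of the constant $C(t) = \sup_{\tau,s}\norm{DX(s,X^{-1}(\tau))DX^{-1}(\tau)}_\infty$ must be handled with care.
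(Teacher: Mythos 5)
Your proposal is correct and follows essentially the same route as the paper: mollify the data, use the known strong-solution theory (\cref{prop:PZZ}) to get approximants with uniform immediately-strong bounds, prove they are Cauchy in $E$ via the relative energy method combined with the $W^{-1,4}$-stability estimate built on the bound $\int_0^t s\norm{\nabla u_2}_\infty^2\,\dd s$ and the flow estimate of \cref{lem:flowestimatenonlip}, and then reuse the same stability inequality with one factor replaced by an arbitrary Leray--Hopf solution to get uniqueness and \cref{eq:stabilityintro}. The one obstacle you flag --- justifying the relative energy inequality when one solution is only Leray--Hopf --- is resolved in the paper exactly as you anticipate, by a mollification/commutator argument, which it imports as \cite[Lemma 4.1]{CrinBaratSkondricViolini2024} together with the careful treatment of the $\eps\to 0$ limit in \cref{prop:wminus4stab1}.
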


In order to prove \cref{thm:mainthmrefined}, it suffices to construct, for every $(\rho_0, u_0)$ satisfying \cref{ass:data1}, a solution which satisfies \eqref{it:imstrong}, \eqref{it:continzero}, \eqref{eq:energyeq} and \eqref{it:stability}. Indeed, once \eqref{it:stability} is established, the uniqueness part of \cref{thm:mainthmrefined} follows immediately. 

Furthermore, the uniqueness part is in fact a \emph{weak-strong uniqueness} result since we can take immediately strong solutions as strong solutions and Leray--Hopf solutions as the class of weak solutions. \cref{thm:mainthmrefined} shows that immediately strong solutions are unique among a much larger class of solutions, namely in the class of Leray--Hopf solutions.

In \cite{HaoShaoWeiZhang2024}, the authors, based on new and groundbreaking techniques, showed the uniqueness of weak solutions of \cref{eq:ins} within a certain class which is slightly smaller than the class of Leray--Hopf solutions. More precisely, they showed the uniqueness of immediately strong solutions, see \cref{def:Immstrongsol}. In the end, it turns out that both classes coincide, see \cref{thm:mainthmrefined}.

Some parts of the proof of \cref{thm:mainthmrefined} are strongly inspired by the result in \cite{HaoShaoWeiZhang2024}, in particular, the techniques used to estimate certain nonlinear terms. However, our approach is very different from the approach in \cite{HaoShaoWeiZhang2024}. We shall mainly focus on the construction of weak solutions. We give a completely new proof of the existence of weak solutions based on approximation with more regular solution emanating from more regular data. We show that the approximating sequence converges strongly to a weak solution. This is the standard procedure to show existence in \cref{thm:globalwellposedness2d}, but in the case of variable densities, at least to the best of our knowledge, this is completely new.

\subsection{Literature overview}

Over the last years, huge progress has been made concerning the uniqueness of strong solutions of \cref{eq:ins}. A typical feature of strong solutions, leading to their uniqueness, is an $L^1$-in-time Lipschitz bound on the velocity $u$:
\begin{align*}
    \nabla u \in L^1(0,\infty;L^\infty(\R^2)).
\end{align*}
 One of the first global and local well-posedness results was proved in \cite{LadyzhenskayaSolonnikov1975}. Given a smooth and bounded domain $\Omega \subset \R^d,$ $d=2,3,$ the authors proved it provided that
\begin{align*}
    \rho_0 \in C^1(\Omega), \quad 0<c_0 \leq \rho_0 \leq C_0, \quad u_0 \in W^{2-2/p,p}(\Omega),\quad p>d.
\end{align*}
A similar result was obtained in \cite{Desjardins1997} by relaxing the assumptions on the initial data. Then, many well-posedness results for \cref{eq:ins} were shown in the so-called critical Besov spaces, \ie, for $p>1$, let 
\begin{align*}
    \rho_0 \in \dot{B}^{d/p}_{p,1}(\R^d) \quad \text{and} \quad u_0 \in \dot{B}^{d/p-1}_{p,1}(\R^d).
\end{align*}
For $p=2,$ the local and global existence and uniqueness of strong solutions under certain condition was shown in \cite{Danchin2003,Danchin2004}. This was later extended in \cite{Abidi2007} to the case $1<p<2d.$ In \cite{DanchinMucha2012}, the authors extended this result to initial densities lying in the multiplier space $\mathcal{M}(\dot{B}^{d/p-1}_{p,1}(\R^d))$. In all the aforementioned results, there is either a smallness condition on the velocity or the initial density is supposed to be sufficiently close to a constant. In \cite{AbidiGui2021}, the authors managed to remove all those restrictions in the case that
\begin{align*}
    \rho_0 \in \dot{B}^{\eps}_{2/\eps,1}(\R^2),\quad u_0 \in \dot{B}^{0}_{2,1}(\R^2).
\end{align*}
See \cite{Xu2022} for a similar result in 3D. Later, in \cite{AbidiGuiZhang2024}, the existence of solutions was extended to
\begin{align*}
    \rho_0 \in \dot{B}^{2/\lambda}_{\lambda,\infty}(\R^2) \cap L^\infty(\R^2),\quad u_0 \in \dot{B}^{2/p-1}_{p,1}(\R^2)\cap L^2(\R^2),\quad \frac{1}{2} < \frac{1}{p} + \frac{1}{\lambda} \leq 1.
\end{align*}
The uniqueness in this framework was proved in \cite{CrinBaratDeNittiSkondricViolini2024}.

A lot of progress was also achieved in the case when the density is only bounded without any further regularity assumptions. One of the first results was shown by Danchin and Mucha in \cite{DanchinMucha2013}. This was later extended in \cite{PaicuZhangZhang2013} to the case $d=2$ and in \cite{ChenZhangZhao2016} to the case $d=3.$ There it is shown that for any positive $\rho_0\in L^\infty(\R^d)$ and $u_0 \in H^s(\R^d)$, $s>d/2-1,$ there is a unique strong solution, under the smallness of the velocity for $d=3$. In the case $d=3,$ the author of \cite{Zhang2020} managed to prove the global existence of a strong solution under the critical assumptions $\rho_0 \in L^\infty(\R^3)$ and if $u_0$ is suitably small in $\dot{B}^{1/2}_{2,1}(\R^3)$. In \cite{DanchinWang2023}, the global existence and uniqueness of strong solutions of \cref{eq:ins} is shown under the assumptions 
\begin{align*}
    \norm{\rho_0-1}_\infty \ll 1, \quad u_0 \in \dot{B}^{d/p-1}_{p,1}(\R^d)\cap L^2(\R^d),\quad p \in (1,d),
\end{align*}
where $d=2,3$, and if $d=3$ a smallness condition on the velocity is necessary. Furthermore, the authors also showed the uniqueness of the solution constructed in \cite{Zhang2020}. In \cite{Danchin2024}, a result analogue to \cite{Zhang2020} is proved in the case $d=2$. Let us also mention that the solutions constructed in \cite{Zhang2020} and \cite{Danchin2024} have a unique $C^1$ flow.  

There has also been a lot of progress pertaining to the global well-posedness of \cref{eq:ins} in the presence of vacuum. In \cite{DanchinMucha2019}, the authors showed the local and global well-posedness of strong solutions provided that
\begin{align*}
    0 \leq \rho_0 \leq C_0, \quad u_0 \in H^1(\Omega). 
\end{align*}
Here, $\Omega$ denotes either a bounded $C^2$-domain or the d-dimensional torus and $d=2,3.$ This was extended in \cite{PrangeTan2023} to the case $\Omega = \R^d.$ More precisely, the authors studied the existence and uniqueness of solutions under certain assumptions on the initial vacuum and it was assumed that $u_0 \in H^1(\R^d).$ One of the scenarios which was analyzed in \cite{PrangeTan2023} is the so-called density patch problem. Here it is assumed that the initial density $\rho_0$ is the indicator function of a sufficiently smooth domain $D.$ In \cite{HaoShaoWeiZhang2}, the density patch problem was solved under more general assumptions on $D$ and, in \cite{HaoShaoWeiZhang2024}, the same authors proved the existence of weak solutions in $\R^2$ under the very general assumptions
\begin{align*}
    0 \leq \rho_0 \leq C_0, \quad u_0 \in L^2(\R^2). 
\end{align*}
Uniqueness was shown if $\rho_0$ is bounded away from $0.$ In \cite{HaoShaoWeiZhang2024}, the authors also proved a Fujita-Kato type result in $\R^3,$ more precisely, they showed the global well-posedness of \cref{eq:ins} under the assumption
\begin{align*}
    0 \leq \rho_0 \leq C_0, \quad u_0 \in \dot{H}^{1/2}(\R^3), \quad \norm{u_0}_{\dot{H}^{1/2}} \leq \eps_0. 
\end{align*}
See also \cite{AdogboMuchaSzlenk2025} for a global well-posedness result with unbounded initial density.

\subsection{Weak vs. strong uniqueness}

As we mentioned, for a long time, the uniqueness was only known for strong solutions. The main reason for this is that most of the proofs of uniqueness rely on an additional Lipschitz bound of the velocity. However, one can hardly expect a Lipschitz control of the velocity of Leray--Hopf solution. Indeed, as pointed out by Danchin in \cite{Danchin2024}, this may even fail for the heat equation. To demonstrate this, let $v$ be solution of the heat equation with respect to a function $v_0$, then it can be shown that
\begin{align*}
    \nabla v \in L^1(0,\infty;L^\infty(\R^2)) \quad \Longleftrightarrow \quad v_0 \in \dot{B}^{-1}_{\infty,1}(\R^2).
\end{align*}
However, the space $L^2(\R^2)$ is not embedded into the homogeneous Besov space $\dot{B}^{-1}_{\infty,1}(\R^2).$ 

Let us now discuss how the uniqueness can be achieved in the case of variable densities. To this end, let $(\rho_1,u_1,P_1)$ and $(\rho_2,u_2,P_2)$ denote again smooth two Leray--Hopf solutions of \cref{eq:ins} with respect to an initial datum $(\rho_0,u_0)$ satisfying \cref{eq:ins}. We discuss two different methods.

\subsubsection{Lagrangian coordinates} 

We start with the so-called transition to \textit{Lagrangian coordinates}. The main idea behind this method is a transition to a new coordinate system which is time-dependent and moves according to the flow associated with the velocity. More precisely, let $X_i \colon [0,\infty) \times \R^2 \to \R^2$ be the flow associated to $u_i$, and define, for every $(t,x) \in [0,\infty) \times \R^2$,
\begin{align*}
    \overline{\rho}_i(t,x) = \rho_i(t,X_i(t,x)), \quad \overline{u}_i(t,x) = u_i(t,X_i(t,x)), \quad \overline{P}_i(t,x) = P_i(t,X_i(t,x)).
\end{align*}
The triplet $(\overline{\rho}_i,\overline{u}_i,\overline{P}_i)$ will satisfy a new system. The crucial observation is that $\rho_i$ is constant along characteristic curves $t \mapsto X_i(t,x)$. Therefore, we have, for every $(t,x) \in [0,\infty) \times \R^2$,
\begin{align*}
    \overline{\rho}_i(t,x) = \rho_i(t,X_i(t,x)) = \rho(0,X^{-1}_i(t,X(t,x))) = \rho_i(0,x)=\rho_0(x).
\end{align*}
Thus, in the new formulation, the density is independent of time which is an advantage when studying uniqueness, see \cite{DanchinMucha2012,PaicuZhangZhang2013,DanchinMucha2019,AbidiGui2021,AbidiGuiZhang2024,Danchin2024} for more details. Let us mention that, when trying to prove that $(\rho_1,u_1,P_1)=(\rho_2,u_2,P_2)$, this approach requires a Lipschitz estimate on both velocities $u_1$ and $u_2$ in order to ensure that the systems for $(\rho_1,u_1,P_1)$ and $(\rho_2,u_2,P_2)$ are equivalent to the transformed systems for $(\overline{\rho}_1,\overline{u}_1,\overline{P}_1)$ and $(\overline{\rho}_2,\overline{u}_2,\overline{P}_2)$. Therefore, it will lead only to uniqueness results in classes of strong solutions where a Lipschitz estimate on the velocity is known to hold true.

\subsubsection{Relative energy method and $W^{-1,4}$-stability}

Another method which can be used to show uniqueness is the relative energy method combined with $W^{-1,4}$-stability estimates\footnote{In \cite{DanchinWang2023}, the authors rely on $H^{-1}$ stability estimates and we refer to their paper for details.}. The relative energy method has a wide range of applications in fluid dynamics, see \cite{Wiedemann2018} for various examples. 
\smallbreak
\textbf{The homogeneous case: $\rho\equiv 1$:} Let us start by recalling how the uniqueness of Leray--Hopf solutions of \cref{eq:ins} is shown when the density is constant in order to understand where the main problems lie when trying to establish the uniqueness of Leray--Hopf solutions in the case of variable densities.

For simplicity, assume that $u_0$ is smooth and let 
\begin{align*}
    u_1,u_2 \in L^\infty(0,\infty;L^2(\R^2)) \cap L^2(0,\infty;\dot{H}^1(\R^2))
\end{align*}
denote two smooth Leray--Hopf solutions with pressure $P_1$ and $P_2$ with respect to an initial datum $u_0$. Denoting $\delta u = u_1 - u_2$ and $\delta P = P_1 - P_2$, one obtains
\begin{align}\label{eq:diffmomentumhomogintro}
    \pt \delta u + (u_1 \cdot \nabla) \delta u + \nabla \delta P - \nu \Delta u = - (\delta u \cdot \nabla) u_2. 
\end{align}
Multiplying \cref{eq:diffmomentumhomogintro} with $\delta u$ and integrating over $(0,t) \times \R^2$, we obtain, for every $t>0$,
\begin{align}\label{eq:energydiffhomog}
    \frac{1}{2} \norm{\delta u(t)}^2_2 + \nu \int_0^t \norm{\nabla \delta u(s)}^2_2 \dd s = - \int_0^t \int_{\R^2} \delta u \otimes \delta u : \nabla u_2 \dd x \dd s.
\end{align}
Employing Ladyzhenskaya's famous inequality:  for $f \in H^1(\R^2)$ and $f \in L^4(\R^2)$, 
\begin{align*}
    \norm{f}_4 \leq C \norm{f}_2^{1/2} \norm{\nabla f}_2^{1/2},
\end{align*}
we can estimate, for every $t>0$, the term on the right-hand side as
\begin{align*}
    - \int_0^t \int_{\R^2} \delta u \otimes \delta u : \nabla u_2 \dd x \dd s \leq C \int_0^t \norm{\nabla u_2}_2^2 \norm{\delta u}_2^2  \dd s + \frac{\nu}{2} \int_0^t \norm{\nabla \delta u}_2^2 \dd s.
\end{align*}
Inserting this into \cref{eq:energydiffhomog}, yields, for every $t>0,$ 
\begin{align*}
    \norm{\delta u(t)}^2_2 + \nu \int_0^t \norm{\nabla \delta u(s)}^2_2 \dd s
    & \leq C \int_0^t \norm{\nabla u_2}_2^2 \left( \norm{\delta u(s)}_2^2 + \nu \int_0^s \norm{\nabla \delta u(\tau)}^2_2 \dd \tau \right) \dd s.
\end{align*}
Finally, with Grönwall's inequality, we deduce that, for every $t>0$, 
\begin{align*}
    \norm{\delta u(t)}^2_2 + \nu \int_0^t \norm{\nabla \delta u(s)}^2_2 \dd s = 0.
\end{align*}
This implies that $u_1=u_2.$
\smallbreak
\textbf{The inhomogeneous case $\rho\not\equiv 1$}:  Let us turn to the case of variable densities. Again, we denote by $(\rho_1,u_1,P_1)$ and $(\rho_2,u_2,P_2)$ two smooth solutions of \cref{eq:ins} with respect to some smooth initial data $(\rho_0,u_0).$ We obtain
\begin{align}\label{eq:diffmomentuminhomogintro}
    \rho_1 \pt \delta u + \rho_1 (u_1 \cdot \nabla) \delta u + \nabla \delta P - \nu \Delta u = - \rho_1 (\delta u \cdot \nabla) u_2 - \delta \rho \dot{u}_2,
\end{align}
where we used the notations
\begin{align*}
    \delta \rho = \rho_1 - \rho_2, \quad \delta u = u_1 - u_2, \quad \delta P = P_1 - P_2, \quad \dot{u}_2 = \pt u_2 + (u_2 \cdot \nabla) u_2.
\end{align*}
Multiplying \cref{eq:diffmomentuminhomogintro} by $\delta u$ and integrating over $(0,t) \times \R^2$ yields
\begin{align}\label{eq:energydiffinhomog}
    \frac{1}{2}\norm{\sqrt{\rho_1(t)} \delta u(t)}_2^2 + \nu \int_0^t \norm{\nabla \delta u}^2_2 \dd s  &= -\int_0^t \int_{\R^2} \delta \rho \dot{ u_2} \cdot \delta u -\int_0^t \int_{\R^2}  \rho_1 \delta u \otimes \delta u : \nabla u_2.
\end{align}
Compared to \cref{eq:energydiffhomog}, the right-hand side of \cref{eq:energydiffinhomog} contains the additional term  
\begin{align}\label{eq:badterm}
    \int_0^t \int_{\R^2} \delta \rho \dot{ u_2} \cdot \delta u
\end{align}
which is the main obstacle to show uniqueness of Leray--Hopf solutions with the relative energy method. The main difficulty with \cref{eq:badterm} is that, at first glance, it is not quadratic in $\delta u$, which is problematic to apply Grönwall's inequality. To overcome this issue, it is necessary to derive stability estimates for $\delta \rho$ in a well-chosen norm. Recall that $\delta \rho$ satisfies
\begin{align}\label{eq:diffconvofmass}
    \partial_t \delta \rho + \div(\delta \rho u_2) = \div(\rho_1 \delta u) = \nabla \rho_1 \delta u, \quad \delta \rho(0) = 0.  
\end{align}
Let $X$ denote the flow associated to $u_2$. Then, we have, at least formally due to the solution formula for transport equation, see \cite[Proposition 2.3]{AmbrosioCrippa2014}, for every $t \geq 0,$
\begin{align}\label{eq:formulatransintro}
    \delta \rho(t,X(t,x)) = - \int_0^t \dive\left((\rho_1 \delta u)(\tau,X(\tau,x) \right) \dd \tau.
\end{align}
Based on the solution formula \cref{eq:formulatransintro} and the fact that $X$ is a measure-preserving diffeomorphism, for every $s>0$ and every $\phi \in \dot{W}^{1,4/3}(\R^2)$,
\begin{align}\label{eq:W-14stablipschitz}
\begin{aligned}
    \abs{\skal{\delta \rho(s), \phi}} \leq C \int_0^s \norm{(\rho_1 \delta u)(\tau)}_4 \dd \tau \norm{\nabla \phi}_{4/3}.
\end{aligned}
\end{align}
By inserting $\phi = \dot{u}_2(s) \cdot \delta u(s)$ into \cref{eq:W-14stablipschitz} and integrating from $0$ to $t$, we obtain that
\begin{align}\label{eq:productestimateintro}
\begin{aligned}
    - \int_0^t \int_{\R^d} \delta \rho \dot{u}_2 \cdot \delta u 
    \leq & C \int_0^t \int_0^s \norm{(\rho_1 \delta u)(\tau)}_4 \dd \tau \norm{\nabla(\dot{u}_2(s) \cdot \delta u(s))}_{4/3} \dd s\\
    \leq & C \int_0^t \sup_{\tau \in (0,s)} \norm{\sqrt{\rho_1(\tau)}\delta u(\tau)}^{1/2}_2 \int_0^s \norm{\nabla \delta u(\tau)}_2^{1/2} \dd \tau \norm{\nabla(\dot{u}_2(s) \cdot \delta u(s))}_{4/3} \dd s.
\end{aligned}
\end{align}
See for instance the proof of \cite[Proposition 4.2]{CrinBaratSkondricViolini2024} for more details. Using the product rule in Sobolev spaces, Hölder's inequality, the Gagliardo--Nirenberg inequality and that $\rho_1$ is bounded away from $0$, we obtain that, for every $s>0$, formally
\begin{align*}
    \norm{\nabla(\dot{u}_2(s) \cdot \delta u(s))}_{\frac{4}{3}} \leq & \norm{\nabla \dot{u}_2(s) \cdot \delta u(s) + \nabla \delta u(s) \cdot \dot{u}_2(s)}_{\frac{4}{3}} \leq \norm{\nabla \dot{u}_2(s)}_2 \norm{\delta u(s)}_4 + \norm{\nabla \delta u(s)}_2 \norm{\dot{u}_2(s)}_4\\
    \lesssim & \norm{\nabla \dot{u}_2(s)}_2 \|\sqrt{\rho_1(s)} u(s)\|_2^{\frac{1}{2}}\norm{\nabla \delta u(s)}_2^{\frac{1}{2}} + \norm{\nabla \delta u(s)}_2 \|\sqrt{\rho_2(s)}\dot{u}_2(s)\|_2^{\frac{1}{2}} \norm{\nabla \dot{u}_2(s)}_2^{\frac{1}{2}}.
\end{align*}
By inserting the latter into \eqref{eq:productestimateintro}, we deduce that, for every $t \in (0,\infty)$,
\begin{equation}\label{eq:productestimateintroquadratic}
    \begin{split}
        - \int_0^t \int_{\R^d} \delta \rho \dot{u}_2 \cdot \delta u 
        \leq C \int_0^t & \sup_{\tau \in (0,s)} \norm{\sqrt{\rho_1(\tau)}\delta u(\tau)}^{\frac{1}{2}}_2 \bigg (\norm{\nabla \dot{u}_2(s)}_2 \norm{\sqrt{\rho_1(s)} u(s)}_2^{\frac{1}{2}}\norm{\nabla \delta u(s)}_2^{\frac{1}{2}} \\
        & + \norm{\nabla \delta u(s)}_2 \norm{\sqrt{\rho_2(s)}\dot{u}_2(s)}_2^{\frac{1}{2}} \norm{\nabla \dot{u}_2(s)}_2^{\frac{1}{2}} \bigg ) \int_0^s \norm{\nabla \delta u(\tau)}_2^{1/2} \dd s.
    \end{split}
\end{equation}
In \eqref{eq:productestimateintroquadratic}, we recover a quadratic structure in $\delta u$ or in $\sqrt{\rho_1} \delta u$. From here, one can now find an $f \in L^1_\loc([0,\infty))$ such that, for every $t>0$, 
\begin{align*}
    - \int_0^t \int_{\R^d} \delta \rho \dot{u}_2 \cdot \delta u 
    \leq \int_0^t f(s) \norm{\sqrt{\rho_1} \delta u(t)}_2^2 \dd s + 
    \frac{\nu}{4} \int_0^t \norm{\nabla \delta u(t)}_2^2.
\end{align*}
Then the uniqueness follows from a Grönwall argument. Let us investigate the constant $C$ in more details. In \cite{CrinBaratSkondricViolini2024}, it was shown that, for every $t>0$, 
\begin{align*}
    C(t) = \sup_{\tau,s \in (0,t)}\norm{DX(s,X^{-1}(\tau)) DX^{-1}(\tau)}_\infty.
\end{align*}
Thus, one way to ensure that $C$ is bounded for finite times is to ask that, for every $t>0$, 
\begin{align}\label{eq:L1intimelip}
    \int_0^t \norm{\nabla u(s)}_\infty \dd s < \infty.
\end{align}
Therefore, this approach requires also a Lipschitz bound on the velocity of at least one of the two solutions. However, the approach described is not optimal as the paper \cite{HaoShaoWeiZhang2024} suggests and the above methodology must be refined to derive the desired uniqueness. The authors observed that it is possible to replace the $L^1$-in-time Lipschitz bound in \cref{eq:L1intimelip} by the following bound
\begin{align}\label{eq:L2intimelip}
    \int_0^t s \norm{\nabla u(s)}_\infty^2 \dd s < \infty
\end{align}
in order to estimate the term \cref{eq:badterm}. Let us mention that the term \cref{eq:badterm} was estimated by means of a duality argument rather than by means of a $\dot{W}^{-1,4}$-stability estimate such as in \cref{eq:W-14stablipschitz}. It is known that for each pair of initial data $(\rho_0,u_0)$ satisfying \cref{ass:data1} there exists at least one solution $(\rho,u)$ satisfying \cref{eq:L2intimelip}. In the next section, we discuss a strategy how the approach from \cite{HaoShaoWeiZhang2024} can be used to show a bound like in \cref{eq:W-14stablipschitz} by only using \cref{eq:L2intimelip}.

\subsection{Outline of the paper}
The paper is organized as follows. In \cref{sec:section2}, we give a brief overview of our strategy and discuss the main difficulties which have to be overcome to prove \cref{thm:mainthmrefined}. In \cref{sec:section3}, we recall some theory related to the transport equation and in \cref{sec:section4} we prove \cref{thm:mainthmrefined}.

\section{Strategy of proof}\label{sec:section2}

Let us discuss main steps of how to prove \cref{thm:mainthmrefined}.

\subsection{Improved $W^{-1,4}$-stability}

Let $(\rho_1,u_1,P_1)$ be a Leray--Hopf solution and let $(\rho_2,u_2,P_2)$ an immediately strong solution with respect to initial data $(\rho_0,u_0)$ satisfying \cref{ass:data1}. As discussed in the previous section, we cannot expect any kind of Lipschitz bound on $ u_2.$ However, from \cref{def:Immstrongsol}, it can be easily deduced that
\begin{align}\label{eq:replacelip}
    K_0 \coloneqq \int_0^\infty s \norm{\nabla u_2(s)}^2_\infty \dd s \leq C \norm{(\rho,u)}_Z.
\end{align}
This follows from the fact that, for every $t>0$,
\begin{align}\label{eq:replacelipt}
    t \norm{\nabla u_2(t)}^2_\infty \leq t \norm{\nabla u_2(t)}^{1/2}_2 \norm{\dot{u}_2(t)}_2 \norm{\nabla \dot{u}_2(t)}^{1/2}_2.
\end{align}
For a detailed discussion of \cref{eq:replacelipt} and why the right-hand side of \cref{eq:replacelip} is $L^1$-in-time, see the proof of \cref{lem:flowestimatenonlip}.

Thus, $u_2$ enjoys some Lipschitz regularity away from $0$ and, since $\sup_{t \in (0,\infty)} \sqrt{t} \norm{u(t)}_\infty < \infty$ as a consequence of \cref{def:Immstrongsol}, we can define the generalized flow
\begin{align}\label{eq:ODEflowintro}
    \begin{cases}
        \partial_t X(t,s,x) = u(t,X(t,s,x)),\quad t,s>0\\
        X(s,s,x) = x.
    \end{cases}
\end{align}
Then the mapping $X \colon (0,\infty)^2 \times \R^2 \to \R^2$ is well-defined since, for every $\eps > 0$,
\begin{align*}
    \nabla u \in L^1_\loc([\eps,\infty);L^\infty(\R^2)),
\end{align*}
Employing again the solution formula for solutions of the transport equation, see \cite[Prop. 2.3]{AmbrosioCrippa2014} and also \cref{prop:propflowII} \cref{it:solucontieq} below, we get for every $s>\eps>0,$
\begin{align}\label{eq:formulatwoflow}
    \delta \rho(s,x) = \delta \rho(\eps,X(\eps,s,x)) - \int_\eps^s \div(\rho_1 \delta u)(\tau,X(\tau,s,x)).
\end{align}
As we show in \cref{prop:wminus4stab1}, from \cref{eq:formulatwoflow}, we can deduce, for every $\phi \in \dot{W}^{1,4/3}(\R^2)$,
\begin{align}\label{eq:Wminus14estiphiintro}
\begin{aligned}
    \abs{\skal{\delta \rho(s,\cdot),\phi}} \leq \sup_{\tau \in (0,s)} \norm{\sqrt{\rho_1}\delta u}^{\frac{1}{2}}_2 \norm{\nabla \phi}_{\frac{4}{3}} \int_0^s \exp\left( K_0^{1/2} \abs{\ln(s/\tau)}^{1/2}\right) \norm{\nabla \delta u}_2^{\frac{1}{2}}\dd \tau.
\end{aligned}
\end{align}
Large parts of the proof of the estimate \cref{eq:Wminus14estiphiintro} are very similar to the proof of \cref{eq:W-14stablipschitz}. However, taking the limit $\eps \to 0$ is quite delicate and the rigorous justification requires some, in particular, showing that
\begin{align*}
    \delta \rho(\eps,X(\eps,s,x)) \underset{\eps \to 0}{\longrightarrow} 0.
\end{align*}
suitable sense. Then, by inserting $\phi = \dot{u}_2(s) \cdot \delta u(s)$ in \cref{eq:Wminus14estiphiintro}, we can estimate \cref{eq:badterm}, for every $t>0$, as
\begin{align}\label{eq:productestimateintrorefined}
    \begin{aligned}
    - \int_0^t \int_{\R^2} \delta \rho(s) \dot{u}_2(s) \cdot \delta u(s) \dd s \lesssim & \int_0^t \sup_{\tau \in (0,s)} \norm{\delta u}^{1/2}_2 \cdot s \norm{\nabla (\dot{u}_2(s) \cdot \delta u(s))}_{4/3}\\ 
    & \times \frac{1}{s} \int_0^s \exp\left(K_0^{1/2} \abs{\ln(s/\tau)}^{1/2}\right) \norm{\nabla \delta u}_2^{1/2}\dd \tau \dd s.
    \end{aligned}
\end{align}
Compared to \cref{eq:productestimateintro}, \cref{eq:productestimateintrorefined} contains the factor
\begin{align*}
    \exp\left(K_0^{1/2} \abs{\ln(s/\tau)}^{1/2}\right).
\end{align*}
Recall that in \cref{eq:productestimateintro} only a constant depending on the Lipschitz of $u_2$ appeared. It was first observed in \cite{HaoShaoWeiZhang2024} how the Grönwall argument can be concluded with this additional factor and, thus, how the Lipschitz bound on $u_2$ can be replaced by $K_0$, and as we mentioned $K_0$ is bounded for immediately strong solutions. This observation is the cornerstone to establish the uniqueness in the class of Leray--Hopf solutions.

\subsection{Uniqueness via relative energy}

After all these preliminaries, a procedure to prove uniqueness of Leray--Hopf solution could be the following: First, show the identity, for every $t>\eps>0$,
\begin{align}\label{eq:energydiffinhomog3}
    \frac{1}{2}\norm{\sqrt{\rho_1(t)} \delta u(t)}_2^2 + \nu \int_0^t \norm{\nabla \delta u}^2_2 \dd s  \leq \frac{1}{2}\norm{\sqrt{\rho_1(\eps)} \delta u(\eps)}_2^2 - \int_\eps^t \int_{\R^2} \delta \rho \dot{ u_2} \cdot \delta u - \rho_1 \delta u \otimes \delta u : \nabla u_2.
\end{align}
Note that it is way more convenient to derive \cref{eq:energydiffinhomog3} away from $0$ since the term $\dot{u}_2$ is very singular in $0.$ Second, use the inequality \cref{eq:Wminus14estiphiintro} to deduce that there is an $f \in L^1([0,\infty))$ such that, for every $t> \eps >0$,
\begin{align*}
    - \int_\eps^t \int_{\R^2} \delta \rho \dot{ u_2} \cdot \delta u - \rho_1 \delta u \otimes \delta u : \nabla u_2  \leq \int_0^t f(s) \norm{\delta u(s)}_2^2 \dd s \lesssim \int_0^t f(s) \norm{\sqrt{\rho_1(s)} \delta u(s)}_2^2 \dd s.
\end{align*}
Note that, we used that, for every $t \geq 0,$ $\rho_1(t) \geq c_0 >0$. Finally, Grönwall's inequality implies that, for every $t>0$, $\norm{\delta u(t)}_2^2=0$, which implies uniqueness of Leray--Hopf solutions.

However, let us mention that it is far from obvious to show that
\begin{align}\label{eq:epsto0}
    \norm{\delta u(\eps)}_2^2 \underset{\eps \to 0}{\longrightarrow} 0,
\end{align}
and therefore to conclude that
\begin{align*}
    \frac{1}{2}\norm{\delta u(t)}_2^2 + \nu \int_0^t \norm{\nabla \delta u}^2_2 \dd s  \leq \norm{\delta u(\eps)}_2^2 - \int_0^t \int_{\R^2} \delta \rho \dot{ u_2} \cdot \delta u -\int_0^t \int_{\R^2}  \rho_1 \delta u \otimes \delta u : \nabla u_2,
\end{align*}
which is necessary to perform a Grönwall argument and to obtain $\delta u \equiv 0$ and $\delta \rho \equiv 0$.

At this point, one could use \cite[Thm. 2.2]{LionsVol11996} which guarantees that \cref{eq:epsto0}. Nevertheless, in the next section, we would like to discuss another strategy to show uniqueness. We shall rather focus on how the relative energy method can be used to show the existence of solutions than showing the uniqueness of solutions directly. The idea is to show that existence can be obtained by approximation with more regular solutions with respect to the energy norm. The uniqueness and stability, but also the energy equality and the continuity in time, are consequences of the stability estimates needed to show convergence of the approximating sequence. Although the existence and uniqueness of (immediately strong) solutions is known from the literature, \cite{HaoShaoWeiZhang2024}, it is worth mentioning that our arguments allow to give a very direct and global construction of weak solutions. Our proof makes no use of any abstract Aubin-Lions-type arguments to get compactness. Instead our construction is very explicit which makes it very natural.

\subsection{Strategy to prove \cref{thm:mainthmrefined}}

Let us now describe the strategy how to prove \cref{thm:mainthmrefined} based on the improved $W^{-1,4}$-estimates. Given $(\rho_0,u_0)$ satisfying \cref{ass:data1}, define, for every $n \in \N$, $u^n_0 \coloneqq \varphi_n * u_0$. Here and in the following $(\varphi_n)_{n \in \N}$ denotes the standard mollification sequence. More precisely, let $\varphi \colon \R^2 \to [0,\infty)$ be a smooth, radial and compactly supported function such that 
\begin{align*}
    \int_{\R^2} \varphi(x) \dd x = 1
\end{align*}
and define, for every $n \in \N$ and every $x \in \R^2$, 
\begin{align*}
    \varphi_n(x) \coloneqq \frac{1}{n^2} \varphi(x/n).
\end{align*}
For every $n \in \N,$ let $(\rho_n,u_n)$ be the unique solution of \cref{eq:ins} with respect to the regularized data $(\rho_0,u^n_0).$ 

\begin{proposition}\label{prop:smoothcauchy}[Existence of immediately strong solutions refined] 
Let $(\rho_0,u_0^n)$ and $(\rho_n,u_n)$ be as above. Then there exists a constant $C>0$, independent of $n,m \in \N$, such that 
\begin{align}\label{eq:esticauchyprop}
    \norm{u_n-u_m}_E^2 = \sup_{t \in (0,\infty)} \norm{u_n(t) - u_m(t)}^2_2 + \nu \int_0^\infty \norm{\nabla(u_n-u_m)}^2_2 \dd s \leq C^2 \norm{u^0_n-u^0_m}_2^2. 
\end{align}
In particular, $(u_n)_{n \in \N}$ is a Cauchy sequence in the space $E.$ Thus, there exists a $u \in E$ such that $\norm{u_n-u}_E \to 0$ and there is a $\rho \in C_{w^*}([0,\infty);L^\infty(\R^2))$ such that, for every $t>0,$ 
\begin{align*}
    \rho_n(t) \wstarlim \rho(t) \quad \textrm{ in } L^\infty(\R^2).
\end{align*}
Furthermore, $(\rho,u)$ is an immediately strong solution of \eqref{eq:ins} which satisfies the energy equality and
\begin{align*}
    \rho u, \sqrt{\rho} u, u \in C([0,\infty);L^2(\R^2)).
\end{align*}
\end{proposition}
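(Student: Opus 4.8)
The proof proposal is to establish the Cauchy estimate \eqref{eq:esticauchyprop} by running the relative energy / $W^{-1,4}$-stability argument between two approximate solutions $(\rho_n,u_n)$ and $(\rho_m,u_m)$, both of which are smooth and hence genuinely immediately strong, so that all the formal manipulations sketched in \cref{sec:section2} are rigorously justified. The key point is that \emph{both} solutions are smooth, so using one as a test function in the weak formulation of the other is legitimate, the flow \eqref{eq:ODEflowintro} is classically well-defined, and the transport formula \eqref{eq:formulatwoflow} holds in the strong sense; therefore the obstruction \eqref{eq:discontizero} does not arise and the clean identity \eqref{eq:energydiffinhomog3} (with $\eps \to 0$) holds. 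One then closes the Grönwall loop, obtaining the bound with a constant $C$ depending only on $\nu, c_0, C_0$ and $\norm{u_0}_2$ — crucially uniform in $n,m$ because the relevant quantities $A_0^0(\rho_n,u_n)$, and the ``Lipschitz-away-from-zero'' quantity $K_0$ from \eqref{eq:replacelip}, are bounded \emph{uniformly in $n$}: this uniformity comes from the a priori estimates for immediately strong solutions in terms of $\norm{u^0_n}_2 \le \norm{u_0}_2$ and the density bounds $c_0,C_0$, which are preserved under mollification. Once \eqref{eq:esticauchyprop} is in hand, completeness of $E$ gives the limit $u$; the uniform $L^\infty$ bound on $\rho_n$ plus weak-$*$ compactness (Banach–Alaoglu, applied slicewise in $t$ together with an Aubin–Lions / equicontinuity argument for the mass equation) gives $\rho \in C_{w^*}([0,\infty);L^\infty)$ with $\rho_n(t) \wstarlim \rho(t)$.

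\textbf{Key steps, in order.} First, record the uniform a priori bounds: $A_0^0(\rho_n,u_n) \le \norm{\sqrt{\rho_0}u^0_n}_2^2 \le C_0 \norm{u_0}_2^2$, and the higher bounds $A_1^0(\rho_n,u_n), A_2^0(\rho_n,u_n) \le C(\nu,c_0,C_0,\norm{u_0}_2)$, hence $K_0 = \int_0^\infty s\norm{\nabla u_m(s)}_\infty^2\dd s \le C$ uniformly in $m$, via the smoothing estimates that underlie \cite{Danchin2024}. Second, derive the difference momentum equation \eqref{eq:diffmomentuminhomogintro} for $\delta u = u_n - u_m$, test with $\delta u$, and obtain the relative energy identity \eqref{eq:energydiffinhomog3} — here legitimate as an equality since everything is smooth and $\delta u(0) = u^0_n - u^0_m$ is the genuine initial datum, giving the extra term $\frac12\norm{\sqrt{\rho_0}(u^0_n-u^0_m)}_2^2 \le \frac{C_0}{2}\norm{u^0_n-u^0_m}_2^2$ on the right. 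Third, bound the two nonlinear terms: the term $\int_0^t\int \rho_1 \delta u \otimes \delta u : \nabla u_m$ is handled by Ladyzhenskaya's inequality exactly as in the homogeneous case, absorbing $\tfrac{\nu}{4}\int_0^t\norm{\nabla\delta u}_2^2$ and leaving $\int_0^t f_1(s)\norm{\sqrt{\rho_1}\delta u(s)}_2^2\dd s$ with $f_1 = C\norm{\nabla u_m}_2^2 \in L^1$; the bad term \eqref{eq:badterm} is estimated using \eqref{eq:Wminus14estiphiintro}–\eqref{eq:productestimateintrorefined}, via \cref{prop:wminus4stab1}, producing another absorbable gradient term plus $\int_0^t f_2(s)\norm{\delta u(s)}_2^2\dd s$ with $f_2 \in L^1([0,\infty))$ depending only on $K_0, A_1^0, A_2^0$ — hence uniformly on $n,m$. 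Fourth, use $\rho_1 = \rho_n \ge c_0$ to replace $\norm{\sqrt{\rho_1}\delta u}_2^2$ by $c_0\norm{\delta u}_2^2$ and apply Grönwall to $t \mapsto \norm{\delta u(t)}_2^2 + \nu\int_0^t\norm{\nabla\delta u}_2^2$, yielding \eqref{eq:esticauchyprop} with $C^2 = \tfrac{C_0}{c_0}\exp\big(\tfrac{1}{c_0}\int_0^\infty(f_1+f_2)\big)$. Finally, pass to the limit: $u_n \to u$ in $E$, extract $\rho$, and verify that $(\rho,u)$ satisfies \cref{def:lerayhopfsol} (weak formulation and energy inequality by lower semicontinuity), is immediately strong (the $A_i^0$ bounds pass to the limit by weak lower semicontinuity), and that $\rho u, \sqrt{\rho}u, u \in C([0,\infty);L^2)$ — the last via the strong $E$-convergence combined with the known continuity for the smooth approximants and equicontinuity estimates at $t=0$ using $\sqrt{\rho_0}u^0_n \to \sqrt{\rho_0}u_0$ in $L^2$.

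\textbf{The main obstacle.} The heart of the matter, and the step I expect to be hardest, is showing that the $L^1$-in-time multiplier $f_2$ arising from the bad term \eqref{eq:badterm} is bounded \emph{uniformly in $n,m$} and genuinely lies in $L^1([0,\infty))$ all the way to $t=\infty$. This requires carefully tracking the $s$-weights: the factor $s\norm{\nabla(\dot u_m(s)\cdot\delta u(s))}_{4/3}$ in \eqref{eq:productestimateintrorefined} must be controlled by $s\norm{\nabla\dot u_m}_{?}\,\norm{\delta u}_{?} + s\norm{\dot u_m}_{?}\norm{\nabla\delta u}_{?}$ and then matched against the weighted quantities in $A_1^0, A_2^0$ and the exponential kernel $\exp(K_0^{1/2}|\ln(s/\tau)|^{1/2})$ from \eqref{eq:Wminus14estiphiintro}; one must check that the resulting time-integral converges, using that $\exp(K_0^{1/2}|\ln(s/\tau)|^{1/2})$ grows slower than any power and that the weighted norms of $\dot u_m, \nabla\dot u_m$ are globally integrable against the correct powers of $s$. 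A secondary delicate point is the large-time behaviour: one needs the $A_i^0$-type bounds globally in time (not merely locally), which is where the $2$D structure and the dissipation $\nabla u \in L^2((0,\infty)\times\R^2)$ are essential, and one must confirm the constant $C$ does not blow up as $t\to\infty$. The $t\to 0$ endpoint is comparatively routine here precisely because the approximants are smooth.
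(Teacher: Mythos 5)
Your overall strategy is the paper's own: relative energy between the approximants, the $W^{-1,4}$ estimate of \cref{prop:wminus4stab1} with the kernel $\exp(C|\ln(s/\tau)|^{1/2})$, uniformity in $n,m$ via $\norm{u^n_0}_2\le\norm{u_0}_2$ and the monotone bound of \cref{prop:PZZ}, Grönwall, and a limit passage. Still, three points in your write-up are genuine gaps or misstatements. (1) The premise that \emph{both} solutions are smooth is not available: only the initial \emph{velocity} is mollified, the density stays merely in $L^\infty$, so $(\rho_n,u_n)$ are just the solutions of \cref{prop:PZZ} (immediately strong, with $u_n\in C([0,\infty);L^2(\R^2))$). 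The energy-difference inequality \cref{eq:energydiffnm} is therefore not obtained by ``testing one smooth solution against the other'' but by invoking \cite[Lemma 4.1]{CrinBaratSkondricViolini2024}, which needs extra regularity of only \emph{one} of the two solutions; the paper exploits exactly this asymmetry later, replacing $(\rho_m,u_m)$ by an arbitrary Leray--Hopf solution to get uniqueness and \cref{eq:stabilityintro}. Your symmetric argument suffices for this proposition, but the justification you give for the identity is not the one that actually works, and the $\eps\to0$ issue inside \cref{prop:wminus4stab1} (the term $\skal{\delta\rho(\eps),\phi(X(s,\eps))}$) still has to be handled via the backward-transport continuity, not by smoothness.

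(2) Slicewise Banach--Alaoglu plus Aubin--Lions yields only subsequential weak-$*$ limits of $\rho_n(t)$; to get convergence of the \emph{full} sequence for every $t$ you must either prove, as the paper does, that $\skal{\rho_n(t)-\rho_m(t),\phi}$ is Cauchy by reusing \cref{eq:Wminus14estiphi}, or supplement your compactness argument with uniqueness of the limit (DiPerna--Lions for the limiting transport equation); as written this step is missing. (3) The claimed route to $\rho u,\sqrt{\rho}u\in C([0,\infty);L^2(\R^2))$ does not work: since $\rho_n\to\rho$ only weak-$*$, the products $\rho_n u_n$ do not converge in $L^\infty_t L^2_x$, so ``strong $E$-convergence plus continuity of the approximants plus equicontinuity at $t=0$'' proves nothing at interior times. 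The paper needs the renormalized weak-$*$ time continuity of $\rho^\alpha$ and $\rho^{2\alpha}$ (\cref{lem:weakcontibeta}) combined with the product lemma \cref{lem:weaktimesstrong}~\cref{it:contiproduct}. Finally, the step you flag as the main obstacle — integrating the kernel against $\norm{\nabla\delta u}_2^{1/2}$ with the Young exponents $(4,4,2)$ — is precisely \cref{lem:Lpinequality} (Minkowski's inequality plus integrability of $\tau^{-1/p}\exp(C|\ln\tau|^{1/2})$ on $(0,1)$, \cref{lem:intfC}); you identify the right mechanism but leave it unproved, and it is the quantitative heart of the uniform-in-$(n,m)$ constant.
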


As we will see, the method which we use to prove \cref{prop:smoothcauchy} will also reveal how to deduce uniqueness of Leray--Hopf solutions. The starting point of the proof of \cref{prop:smoothcauchy} is the following proposition which is a consequence of \cite[Theorem 1.1]{PaicuZhangZhang2013}, \cite[Theorem 1.6]{CrinBaratSkondricViolini2024}, \cite[Lemma 2.9]{CrinBaratDeNittiSkondricViolini2024} and \cite[Section 2]{Danchin2024}.

\begin{proposition}\label{prop:PZZ}
    Assume that $(\rho_0,u_0)$ satisfies \cref{ass:data1} and that additionally $u_0 \in H^1(\R^2).$ Then there exists a unique Leray--Hopf solution $(\rho,u) \in E$ which satisfies the energy equality with
    \begin{align*}
        \rho \in C_{w^*}([0,\infty);L^\infty(\R^2)), \quad u \in C([0,\infty);L^2(\R^2)).
    \end{align*}
    Furthermore, we have, for every $t \geq 0$ and almost every $x \in \R^2$,
    \begin{align*}
        c_0 \leq \rho(t,x) \leq C_0,
    \end{align*}    
    and there exist a monotonically increasing function $f = f(\cdot,c_0,C_0,\nu) \colon [0,\infty) \to [0,\infty)$ such that
    \begin{align*}
        \norm{(\rho,u)}_E \leq f(\norm{u_0}_2).
    \end{align*}
\end{proposition}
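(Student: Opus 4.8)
The plan is to assemble \cref{prop:PZZ} from the four cited results, the only genuinely computational ingredient being the quantitative energy bound; everything else is a verification that the hypotheses match and that the notion of solution produced by those works coincides with our \cref{def:lerayhopfsol}. First I would record existence, uniqueness and regularity: since $u_0 \in H^1(\R^2) \hookrightarrow H^s(\R^2)$ for every $s \in (0,1]$ and $\rho_0$ satisfies the non-vacuum bounds of \cref{ass:data1}, the hypotheses of \cite[Theorem 1.1]{PaicuZhangZhang2013} are met, which provides a unique global solution $(\rho,u)$ of \cref{eq:ins}. Invoking the parabolic smoothing recorded in \cite[Lemma 2.9]{CrinBaratDeNittiSkondricViolini2024} and \cite[Section 2]{Danchin2024}, this solution is in fact immediately strong in the sense of \cref{def:Immstrongsol}; in particular it carries enough regularity to be tested against the functions appearing in \cref{def:lerayhopfsol}, so it is a Leray--Hopf solution with $u \in E$.

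Next I would establish the density bounds and the continuity. The velocity of an immediately strong solution satisfies $\nabla u \in L^1_\loc([\eps,\infty);L^\infty(\R^2))$ for every $\eps > 0$ (this is precisely the finiteness of $K_0$ in \cref{eq:replacelip}), so the associated flow is well defined on $(0,\infty)$ and $\rho$ is merely $\rho_0$ transported along it; hence the pointwise bounds propagate, yielding $c_0 \leq \rho(t,x) \leq C_0$ for every $t \geq 0$ and a.e.\ $x \in \R^2$. This is the no-vacuum propagation of \cref{rmk:novacuum}, i.e.\ \cite[Theorem 1.6]{CrinBaratSkondricViolini2024}. The weak-$\ast$ continuity $\rho \in C_{w^*}([0,\infty);L^\infty(\R^2))$ and the strong continuity $u \in C([0,\infty);L^2(\R^2))$ are then furnished by \cite[Lemma 2.9]{CrinBaratDeNittiSkondricViolini2024}.

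The energy bound is the single step I would carry out by hand. The energy inequality in \cref{def:lerayhopfsol} gives, for every $t>0$,
\begin{align*}
    \frac{1}{2} \int_{\R^2} \rho(t) |u(t)|^2 \dd x + \nu \int_0^t \int_{\R^2} |\nabla u|^2 \dd x \dd s \leq \frac{1}{2} \int_{\R^2} \rho_0 |u_0|^2 \dd x.
\end{align*}
Using the lower bound $\rho(t) \geq c_0$ on the left and the upper bound $\rho_0 \leq C_0$ on the right, this becomes
\begin{align*}
    \frac{c_0}{2} \norm{u(t)}_2^2 + \nu \int_0^t \norm{\nabla u}_2^2 \dd s \leq \frac{C_0}{2} \norm{u_0}_2^2.
\end{align*}
Taking the supremum over $t \in (0,\infty)$ and adding the two resulting estimates gives $\norm{u}_E^2 \leq C_0\bigl(c_0^{-1} + \tfrac{1}{2}\bigr)\norm{u_0}_2^2$, so one may take $f(r) \coloneqq \bigl(C_0(c_0^{-1} + \tfrac{1}{2})\bigr)^{1/2} r$, which is monotonically increasing and depends only on $c_0,C_0,\nu$ and $\norm{u_0}_2$ (the $\nu$ factors built into $\norm{\cdot}_E$ cancel, so the bound in fact does not worsen in $\nu$).

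I expect the main obstacle to be not analytical but a matter of bookkeeping: reconciling the differing notions of solution and regularity across \cite{PaicuZhangZhang2013,CrinBaratSkondricViolini2024,CrinBaratDeNittiSkondricViolini2024,Danchin2024} and checking that the object they produce is precisely a Leray--Hopf solution in our sense, with uniqueness understood in that class. The delicate point is that the uniqueness in \cite[Theorem 1.1]{PaicuZhangZhang2013} is a priori stated within a more regular class, so one must verify---using the immediately strong regularity from the first step together with the continuity from the second---that it upgrades to uniqueness among all Leray--Hopf solutions emanating from $(\rho_0,u_0)$ with $u_0 \in H^1(\R^2)$.
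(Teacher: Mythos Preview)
Your argument is sound for the statement as literally written, but it diverges from the paper on the one substantive point: what the function $f$ is meant to control. The notation $\norm{(\rho,u)}_E$ in the proposition is evidently a typo for $\norm{(\rho,u)}_Z$ (note that $E$ is a space for $u$ alone, so $(\rho,u)\in E$ is ill-typed); the paper's own discussion immediately after the statement builds $f$ by bounding $A^0_1,A^0_2,A^0_3$, and later the proposition is invoked as $\norm{(\rho_n,u_n)}_Z\le f(\norm{u_0}_2)$ in the proof of \cref{prop:smoothcauchy}. Your linear $f(r)=\bigl(C_0(c_0^{-1}+\tfrac12)\bigr)^{1/2}r$ correctly bounds the energy norm, but the paper instead chains Ladyzhenskaya and the estimates of \cite[Section 2]{Danchin2024} to obtain $A^0_1\le M_1\norm{u_0}_2^2\exp(M_1\norm{u_0}_2^4)$, then $A^0_2\le M_2\exp(M_2 A_1^2)$ and $A^0_3\le M_3\exp(M_3 A_2^2)$, giving a tower of exponentials in $\norm{u_0}_2$. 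This is the $f$ actually needed downstream; your bound would not suffice at the point where $s^2\norm{\nabla\dot u_n}_2^2$ and $\norm{(\rho_n,u_n)}_Z$ appear in the relative-energy argument.

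Two smaller attribution mismatches: the paper cites \cite[Theorem 1.6]{CrinBaratSkondricViolini2024} for \emph{uniqueness among Leray--Hopf solutions} (not for no-vacuum propagation, which is \cite[Theorem 1.8]{CrinBaratSkondricViolini2024} via \cref{rmk:novacuum}), and \cite[Lemma 2.9]{CrinBaratDeNittiSkondricViolini2024} for the \emph{energy equality} rather than continuity. This resolves the concern you flag at the end about upgrading uniqueness from the regular class of \cite{PaicuZhangZhang2013} to the full Leray--Hopf class: the paper simply quotes the weak--strong uniqueness theorem of \cite{CrinBaratSkondricViolini2024} rather than arguing it from scratch.
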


The existence of such solutions of \cref{eq:ins} follows \cite[Theorem 1.1]{PaicuZhangZhang2013}, the uniqueness among Leray--Hopf solutions is a consequence of \cite[Theorem 1.6]{CrinBaratSkondricViolini2024} and the energy equality was demonstrated in \cite[Lemma 2.9]{CrinBaratDeNittiSkondricViolini2024}. 

Let us discuss some properties of the function $f$ in \cref{prop:PZZ}. Since $(\rho,u)$ satisfies the energy equality and since $\rho$ is bounded away from $0,$ we get, by Ladyzhenskaya's inequality,
\begin{align*}
    \int_0^\infty \norm{u(s)}_4^4 \dd s \leq C \sup_{t \in [0,\infty)} \cdot \int_0^\infty \norm{\nabla u(s)}^2_2 \dd s \leq \frac{C C_0}{\nu c_0} \norm{u_0}^2_2 \eqqcolon M_0 \norm{u_0}^4_2
\end{align*}
The constant $C>0$ is independent of $c_0,C_0,\nu$ and $\norm{u_0}_2.$ As demonstrated in \cite{Danchin2024}, we have
\begin{align*}
    A_1(\rho,u) & \leq \norm{\sqrt{\rho_0} u_0}^2_2 \exp \left( \int_0^\infty \norm{u(s)}_4^4 \dd s \right)\\
    & \leq \norm{\sqrt{\rho_0} u_0}^2_2 \exp \left( C^2_0 M_0 \norm{u_0}^4_2 \right) \leq M_0 \norm{u_0}^2_2 \exp \left( M_1 \norm{u_0}^4_2 \right),
\end{align*}
where $M_1 \coloneqq \max \{ C^2_0,C^2_0 M_0\}.$ Moreover, it was shown in \cite[Section 2]{Danchin2024} that there are constants $M_2,M_3>0$, independent of $\norm{u_0}_2$, depending only on $c_0,C_0,\nu$, such that 
\begin{align*}
    A_2(\rho,u) \leq M_2 \exp \left( M_2 A_1(\rho,u)^2 \right) \quad \text{ and } \quad A_3(\rho,u) \leq M_3 \exp \left( M_3 A_2(\rho,u)^2 \right).
\end{align*}
From here it is easy to find an explicit representation of the desired function $f.$

Let $(\rho_n,u_n)$ denote the solution of \cref{eq:ins} from \cref{prop:PZZ} with respect to the initial data $(\rho_0,u^n_0)$. Our strategy to prove \cref{thm:mainthmrefined} is to show that $(u_n)_{n \in \N}$ is a Cauchy sequence in the space $E.$ This is based on the relative energy method which we use to derive stability estimates.

Writing, for every $n,m \in \N$, $\delta^m_n \rho \coloneqq \rho_m - \rho_n$ and $\delta^m_n u \coloneqq u_m - u_n$, we have
\begin{align}\label{eq:energydiff}
    \begin{aligned}
    \frac{1}{2} \norm{\sqrt{\rho_m(t)} \delta^m_n u(t)}_2^2 + \nu \int_0^t \norm{\nabla \delta^m_n u}^2_2 \dd s  \leq & - \int_0^t \int_{\R^2} \delta^m_{n} \rho \dot{u}_n \cdot \delta^m_n u +\rho_m \delta^m_n u \otimes \delta^m_n u : \nabla u_n\\
    & + \frac{1}{2}\norm{\sqrt{\rho_0} \delta^m_n u(0)}_2^2.
    \end{aligned}
\end{align}
Due to the higher regularity of $(\rho_n,u_n)$, the inequality \cref{eq:energydiff} holds true even if we replace $(\rho_m,u_m)$ by any Leray--Hopf solution. This is a consequence of \cite[Lemma 4.1]{CrinBaratSkondricViolini2024}.

Next, for every $t>0$ and every $m,n \in \N,$ we define
\begin{align}\label{eq:estimatefnmintro}
    f^m_n(t) \coloneqq \norm{\delta^m_n u(t)}_2^2 + \int_0^t \norm{\nabla \delta^m_n u}^2_2.
\end{align}
With some adaptation of the techniques that were developed in \cite{HaoShaoWeiZhang2024}, we show that, for every $n,m \in \N$, there exist a $C=C(n,m)>0$ and a $g_n \in L^1(0,\infty),$ with $g_n \geq 0$, such that, for every $t > 0$,
\begin{align*}
    f^m_n(t) \leq  C(n,m) \norm{\sqrt{\rho_0} \delta^m_n u(0)}_2^2 + \int_0^t g_n(s) f^m_n(s) \dd s.
\end{align*}
By Grönwall's inequality, we have, for every $t>0$,
\begin{align}
    \sup_{t \in (0,\infty)} f^m_n(t) \leq C(n,m) \norm{\sqrt{\rho_0} \delta^m_n u(0)}_2^2 \exp \left(\int_0^\infty g_n(s) \dd s \right).
\end{align}
One of the key steps to conclude that $(u_n)_{n \in \N}$ is a Cauchy sequence is to show that $C(n,m)$ and $g_n$ can be bounded only by quantities appearing in the definition of immediately strong solutions. This was first observed in \cite{HaoShaoWeiZhang2024}. More precisely, we show that there is a continuous, monotonically increasing function $h \colon [0,\infty) \to [0,\infty)$ such that, for every $n,m \in \N$,
\begin{align*}
    C(n,m) , \int_0^\infty g_n(s) \dd s \leq  h \left( \norm{(\rho_n, u_n)}_E \right). 
\end{align*}
Since the function $f=f(\cdot,c_0,C_0,\nu)$ appearing in \cref{prop:PZZ} is monotonically increasing and since, for every $n \in \N$,
\begin{align*}
    \norm{u^n_0}_2 = \norm{u^n_0 * \varphi_n}_2 \leq \norm{u_0}_2,
\end{align*}
we deduce that, for every $n,m \in \N$, 
\begin{align*}
    C(n,m) , \int_0^\infty g_n(s) \dd s \leq  h(f(\norm{u_0}_2)).
\end{align*}
Thus, we get that there is a constant $C>0$ such that, for every $n,m \in \N$,
\begin{align}\label{eq:esticauchyconcl}
    \sup_{t \in (0,\infty)} \norm{u_n(t)-u_m(t)}^2_2 + \int_0^\infty \norm{\nabla (u_n(s)-u_m(s))}^2_2 \dd s \leq C \norm{u^n_0-u^m_0}^2_2.
\end{align}
We conclude that $(u_n)_{n \in \N}$ is a Cauchy sequence in the space $E$ and, thus that there exists a $u \in E$ such that
\begin{align}\label{eq:convunintro}
    \norm{u_n - u}_E \underset{n \to \infty}{\longrightarrow} 0.
\end{align}
This enables us to conclude that there is a $\rho \in C_{w^*}([0,\infty);L^\infty(\R^2))$ such that, for every $t>0$,
\begin{align}\label{eq:convrhonintro}
    \rho_n(t) \underset{n \to \infty}{\wlim} \rho(t) \quad \text{ in } \quad L^\infty(\R^2).
\end{align}
The convergences in \cref{eq:convunintro} and \cref{eq:convrhonintro} suffice to show that $(\rho,u)$ is an immediately strong solution of \cref{eq:ins} which satisfies the energy equality and all other regularity conditions in \cref{thm:mainthmrefined}.

The advantage of the relative energy method, which we use to derive \cref{eq:esticauchyprop}, is that \cref{eq:esticauchyprop} remains true when we replace $(\rho_m,u_m)$ by any Leray--Hopf solution $(\varrho,v)$ with respect to some initial data $(\rho_0,v_0),$ where $v_0 \in L^2_\sigma(\R^2).$ We obtain, for every $n \in \N$,
\begin{align*}
    \sup_{t \in (0,\infty)} \norm{u_n(t)-v(t)}^2_2 + \nu \int_0^\infty \norm{\nabla(u_n-v)}^2_2 \dd s \leq C \norm{u^n_0-v_0}^2_2.
\end{align*}
Letting $n \to \infty,$ we deduce the uniqueness of Leray--Hopf solutions of \cref{eq:ins} in the absence of vacuum.

\section{Some transport theory}\label{sec:section3}

\subsection{Weak and renormalized solution of the continuity equation}

In this section, we discuss some properties of bounded solutions of the continuity equation. In the following, let $I=[0,\infty)$ or $I=[0,T],$ with $0<T<\infty$, and assume that, for a $q \in [1,\infty]$,
\begin{align}\label{ass:velocity}
    u \in L^1_\loc(I;W^{1,q}_\loc(\R^2)),\quad \frac{u}{1+|x|} \in L^1_\loc (I;L^1(\R^2) + L^\infty(\R^2)), \quad \div u = 0.
\end{align}
Given a $\rho_0 \in L^\infty(\R^2),$ then we consider the continuity equation on $I \times \R^2$
\begin{align}\label{eq:conteq}
    \begin{cases}
        \partial_t \rho + \div(\rho u) =0,\\
        \rho(0) = \rho_0.
    \end{cases}
\end{align}
Due to the incompressibility condition $\div u = 0,$ the first equation in \cref{eq:conteq} is (formally) equivalent to
\begin{align*}
    \partial_t \rho + u \cdot \nabla \rho =0,
\end{align*}
which allows us to use theory about the transport equation.

\begin{definition}[Weak and renormalized solution]\label{def:weakrensol}
    We say that $\rho \in L^\infty(I \times \R^2)$ is a weak of \cref{eq:conteq} if for every $\varphi \in C^\infty_c(I \times \R^2)$,
    \begin{align*}
        \int_I \int_{\R^2} \rho \partial_s \varphi \dd x \dd s +  \int_I \int_{\R^2} \rho u \cdot \nabla \varphi \dd x \dd s = - \int_{\R^2} \rho_0 \varphi(0) \dd x.
    \end{align*}
    We say that $\rho$ is a renormalized solution of \cref{eq:conteq} if, for every $\beta \in C^1(\R),$ $\beta(\rho)$ is a weak solution of \cref{eq:conteq} with initial value $\beta(\rho_0)$.
\end{definition}

\begin{proposition}\label{prop:wellposedtrans}
    Assume that $u$ satisfies \cref{ass:velocity} with $q \in [1,\infty)$. Then, for every $\rho_0 \in L^\infty(\R^2)$ there is a unique renormalized solution $\rho \in L^\infty(I \times \R^2)$, and we can modify $\rho$ on negligible set of times so that
    \begin{align*}
        \rho \in C_{w^*}(I;L^\infty(\R^2)).
    \end{align*}
\end{proposition}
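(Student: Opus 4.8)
\medskip

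The plan is to reduce everything to the classical DiPerna--Lions--Ambrosio theory for the transport equation, which applies precisely under assumptions of the type \cref{ass:velocity}. First I would record that since $\div u = 0$, the continuity equation \cref{eq:conteq} and the transport equation $\partial_t\rho + u\cdot\nabla\rho = 0$ have the same weak solutions in $L^\infty$; this is a routine integration-by-parts once one checks that the product $\rho u$ is locally integrable, which follows from $\rho\in L^\infty$ together with $u/(1+|x|)\in L^1_\loc(I;L^1+L^\infty)$ and $\rho_0\in L^\infty$. Then I would invoke the well-posedness theorem of DiPerna--Lions (as extended by Ambrosio), in the form stated in \cite{AmbrosioCrippa2014}: a vector field $u$ with $u\in L^1_\loc(I;W^{1,q}_\loc)$, $\div u \in L^\infty$ (here $\div u = 0$), and the growth bound $u/(1+|x|)\in L^1_\loc(I;L^1+L^\infty)$ has the renormalization property, and consequently for every $\rho_0\in L^\infty(\R^2)$ there exists a unique renormalized (equivalently, in this setting, weak and bounded) solution $\rho\in L^\infty(I\times\R^2)$, with $\|\rho(t,\cdot)\|_\infty \le \|\rho_0\|_\infty$ for a.e.\ $t$. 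The growth assumption is exactly what guarantees no mass escapes to infinity in finite time, so the uniqueness is global on $I$.

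\medskip

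For the time-continuity statement, the standard argument is as follows. For any fixed test function $\varphi\in C_c^\infty(\R^2)$, the map
\begin{align*}
    t \mapsto \int_{\R^2} \rho(t,x)\,\varphi(x)\dx
\end{align*}
has a distributional derivative in $t$ equal to $\int_{\R^2}\rho u\cdot\nabla\varphi\dx$, which lies in $L^1_\loc(I)$ because $\nabla\varphi$ is bounded with compact support and $\rho u \in L^1_\loc(I;L^1_\loc)$; hence this map coincides a.e.\ with an absolutely continuous, in particular continuous, function of $t$. Since $C_c^\infty(\R^2)$ is dense in $L^1(\R^2)$ and $\sup_t\|\rho(t)\|_\infty<\infty$, an $\eps/3$-approximation argument upgrades this to: $t\mapsto \int \rho(t)\psi$ is continuous for every $\psi\in L^1(\R^2)$, after modifying $\rho$ on a negligible set of times. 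That is precisely the assertion $\rho\in C_{w^*}(I;L^\infty(\R^2))$.

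\medskip

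The only genuinely delicate point is checking that the hypotheses on $u$ in \cref{ass:velocity} match the hypotheses of the renormalization theorem one quotes; this is where the condition $u/(1+|x|)\in L^1_\loc(I;L^1+L^\infty)$ and the local Sobolev regularity $W^{1,q}_\loc$ with $q\ge 1$ are used, and one must be slightly careful that $q=1$ is still admissible in the version of the theory being cited (it is, in Ambrosio's formulation for $BV$ and a fortiori for $W^{1,1}_\loc$ fields with bounded divergence). Everything else is soft functional analysis. I would therefore structure the write-up as: (i) equivalence of the two equations via $\div u=0$; (ii) citation of existence and uniqueness of the renormalized/weak bounded solution from \cite{AmbrosioCrippa2014}; (iii) the weak-$*$ continuity in time by the density argument above.
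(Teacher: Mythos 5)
Your proposal is correct and follows essentially the same route as the paper: the paper simply cites DiPerna--Lions (Cor.~II.1 and II.2 of \cite{DiPernaLions1989}) for existence and uniqueness of the bounded renormalized solution, and cites \cite[Remark 2.2.2]{Crippa2008} for the weak-$*$ continuity in time, which is exactly the density/$\eps$-argument you spell out. Writing out that argument explicitly (and noting the $q=1$ case is covered by the $W^{1,1}_\loc$/BV theory) is fine and adds nothing problematic.
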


\begin{proof}
    For the existence and uniqueness of weak solutions which are additionally renormalized, see \cite[Cor. II.1 and Cor. II.2]{DiPernaLions1989}. For an argument which shows that weak (renormalized) solutions $\rho \in L^\infty(I \times \R^2)$ of \cref{eq:conteq} can be modified so that 
    \begin{align*}
        \rho \in C_{w^*}(I;L^\infty(\R^2)),
    \end{align*}
    see \cite[Remark 2.2.2]{Crippa2008}.
\end{proof}

\begin{proposition}
    Let $(\rho,u)$ be a Leray--Hopf solution with respect to initial data $(\rho_0,u_0)$ satisfying \cref{ass:data1}. Then $\rho$ is a renormalized solution of \cref{eq:conteq}.
\end{proposition}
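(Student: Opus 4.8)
The plan is to reduce the claim to \cref{prop:wellposedtrans} together with the uniqueness of bounded weak solutions of the continuity equation. First, I would observe that $\rho$ is already a bounded weak solution of \cref{eq:conteq} with initial datum $\rho_0$ in the sense of \cref{def:weakrensol}: indeed $\rho\in L^\infty((0,\infty)\times\R^2)$ by \cref{def:lerayhopfsol}\,(i), the appearing integrals are well defined since $\rho u\in L^1_{\loc}([0,\infty)\times\R^2)$ (because $\rho\in L^\infty$ and $u\in L^2_{\loc}$), and the required weak identity against all $\varphi\in C_c^\infty([0,\infty)\times\R^2)$ is precisely the first bullet of \cref{def:lerayhopfsol}\,(ii).

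Second, I would verify that the velocity $u$ of a Leray--Hopf solution satisfies \cref{ass:velocity} with $q=2$. The incompressibility $\div u=0$ is the third bullet of \cref{def:lerayhopfsol}\,(ii), and from $u\in L^2_{\loc}([0,\infty)\times\R^2)$ together with $\nabla u\in L^2((0,\infty)\times\R^2)$, Cauchy--Schwarz in time gives $\int_0^T\norm{u(t)}_{W^{1,2}(B_R)}\dd t<\infty$ for all $T,R>0$, hence $u\in L^1_{\loc}(I;W^{1,2}_{\loc}(\R^2))$. The only delicate point is the weighted growth condition $\tfrac{u}{1+|x|}\in L^1_{\loc}(I;L^1(\R^2)+L^\infty(\R^2))$; here I would establish the pointwise-in-time bound
\[
\norm{\tfrac{u(t)}{1+|x|}}_{L^1(\R^2)+L^\infty(\R^2)}\le C\big(\norm{u(t)}_{L^2(B_2)}+\norm{\nabla u(t)}_{L^2(\R^2)}\big)\qquad\text{for a.e.\ }t,
\]
and then integrate in $t$ over $[0,T]$ using Cauchy--Schwarz once more (both right-hand side terms being finite for a.e.\ $t$ with an $L^1(0,T)$ majorant). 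For the pointwise bound I would split $\R^2=B_1\cup\bigcup_{k\ge0}A_k$ with dyadic annuli $A_k=\{2^k\le|x|<2^{k+1}\}$: on $B_1$ one simply uses $\tfrac{|u(t)|}{1+|x|}\le|u(t)|\in L^1(B_1)$; on each $A_k$, denoting by $c_k$ the average of $u(t)$ over $A_k$, a scale-invariant Poincaré inequality gives $2^{-k}\norm{u(t)-c_k}_{L^2(A_k)}\le C\norm{\nabla u(t)}_{L^2(A_k)}$, so that $\sum_{k\ge0}2^{-k}(u(t)-c_k)\mathbf{1}_{A_k}$ is an $L^2(\{|x|\ge1\})$ function with norm $\le C\norm{\nabla u(t)}_{L^2(\R^2)}$, while a telescoping argument across overlapping annuli yields $|c_k|\le C(\norm{u(t)}_{L^2(B_2)}+k^{1/2}\norm{\nabla u(t)}_{L^2(\R^2)})$, whence $\sum_{k\ge0}2^{-k}|c_k|\mathbf{1}_{A_k}$ is bounded and lies in $L^\infty(\R^2)$. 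Combining these pieces with the elementary inclusion $L^2(\R^2)\subset L^1(\R^2)+L^\infty(\R^2)$ finishes the pointwise estimate. (Alternatively, the no-vacuum propagation recorded in \cref{rmk:novacuum} yields $u\in L^\infty((0,\infty);L^2(\R^2))$ outright, after which the weighted condition is immediate from $L^2\subset L^1+L^\infty$; I would nonetheless prefer the self-contained route above, since \cref{rmk:novacuum} is itself derived from renormalization.)

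Third, I would conclude. Since $u$ satisfies \cref{ass:velocity} and $\rho_0\in L^\infty(\R^2)$, \cref{prop:wellposedtrans} and the DiPerna--Lions results invoked in its proof show that \cref{eq:conteq} admits a unique weak solution in $L^\infty(I\times\R^2)$ and that this solution is renormalized. By the first step, $\rho$ is such a weak solution, hence it coincides with the renormalized one; in particular $\rho$ is a renormalized solution of \cref{eq:conteq}. The main obstacle is the second step, specifically extracting the weighted integrability of $u$ from the rather weak a priori information available for a Leray--Hopf solution ($u$ only locally square-integrable in space, with the gradient globally in $L^2$), which is what forces the dyadic Poincaré argument; the remaining steps are either routine bookkeeping or a direct appeal to \cref{prop:wellposedtrans}.
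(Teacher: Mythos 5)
Your proposal is correct and follows the same overall route as the paper: check that the velocity of a Leray--Hopf solution satisfies \cref{ass:velocity} (with $q=2$) and then invoke \cref{prop:wellposedtrans}, i.e.\ the DiPerna--Lions theory, noting that $\rho$ is already a bounded distributional solution of \cref{eq:conteq} by \cref{def:lerayhopfsol}. The only real divergence is in the one delicate step: the paper simply cites \cite[Lemma A.1]{CrinBaratSkondricViolini2024} for the growth condition $\tfrac{u}{1+|x|}\in L^1_\loc(I;L^1(\R^2)+L^\infty(\R^2))$, whereas you prove it from scratch via a dyadic-annuli decomposition, a scale-invariant Poincar\'e inequality on each annulus and a telescoping bound on the averages $c_k$; this argument is sound (the deviation part lands in $L^2\subset L^1+L^\infty$ with norm controlled by $\norm{\nabla u(t)}_2$, the piecewise-constant part is in $L^\infty$ since $2^{-k}k^{1/2}$ is bounded, and both majorants are in $L^1(0,T)$ by Cauchy--Schwarz in time), so your proof is self-contained where the paper's is not. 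You also make explicit two points the paper leaves implicit: that $\rho$ is a bounded weak solution by the first bullet of \cref{def:lerayhopfsol}\,(ii), and that the passage from ``there exists a unique renormalized solution'' to ``$\rho$ itself is renormalized'' uses uniqueness of bounded weak solutions (or equivalently that every bounded weak solution is renormalized), which is exactly what the cited DiPerna--Lions corollaries provide; your remark about avoiding \cref{rmk:novacuum} to prevent circularity is also well taken, since that remark ultimately rests on renormalization.
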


\begin{proof}
    Let $(\rho,u)$ be a Leray--Hopf solution of \cref{eq:ins}. We have to verify that $u$ satisfies \cref{ass:velocity}. From \cref{def:lerayhopfsol}, it follows
    \begin{align*}
        u \in L^1_\loc(I;W^{1,2}_\loc(\R^2)).
    \end{align*}
    In \cite[Lemma A.1]{CrinBaratSkondricViolini2024}, it was shown that
    \begin{align*}
        \frac{u}{1+|x|} \in L^1_\loc(I;L^1(\R^2)+L^\infty(\R^2)).
    \end{align*}
    Thus, all conditions from \cref{ass:velocity} are satisfied and the statement follows from \cref{prop:wellposedtrans}.
\end{proof}
From now on, we assume that, for every solution $\rho$ of \cref{eq:conteq},
\begin{align*}
    \rho \in C_{w^*}(I;L^\infty(\R^2)).
\end{align*}
We need the following lemma which is an extension of \cref{prop:wellposedtrans}, as we need to replace $\beta \in C^1(\R)$ by $\beta \in C^1((0,\infty))$.

\begin{lemma}\label{lem:weakcontibeta}
    Let $(\rho,u)$ be a Leray--Hopf solution with respect to initial data $(\rho_0,u_0)$ satisfying \cref{ass:data1}. Then, for every $\beta \in C^1((0, \infty))$, every $\phi \in C^\infty_c(\R^2)$ and every $t>0,$ we have
    \begin{align*}
        \int_{\R^2} \beta(\rho(t)) \phi \dd x - \int_{\R^2} \beta(\rho_0) \phi \dd x = \int_0^t \int_{\R^2} \beta(\rho) u \cdot \nabla \phi \dd x \dd s
    \end{align*}
    In particular, for every $\beta \in C^1((0,\infty)),$
    \begin{align*}
        \beta(\rho) \in C_{w^*}([0,\infty);L^\infty(\R^2)).
    \end{align*}
\end{lemma}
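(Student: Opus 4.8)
The plan is to reduce the statement to the renormalization property of \cref{prop:wellposedtrans}, which is available for $\beta\in C^1(\R)$, by exploiting that the density never reaches the boundary of $(0,\infty)$. Indeed, by \cref{rmk:novacuum} we have, for every $t\ge 0$, $c_0\le\rho(t,x)\le C_0$ for a.e.\ $x\in\R^2$, so $\rho$ takes values in the compact interval $[c_0,C_0]\subset(0,\infty)$ and only $\beta|_{[c_0,C_0]}$ is ever seen. I would therefore first pick $\tilde\beta\in C^1(\R)$ with $\tilde\beta\equiv\beta$ on $[c_0,C_0]$ (e.g.\ $\tilde\beta=\beta\circ\psi$ for some $\psi\in C^\infty(\R)$ equal to the identity on $[c_0,C_0]$ and valued in $[c_0/2,2C_0]$). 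Then $\tilde\beta(\rho)=\beta(\rho)$ and $\tilde\beta(\rho_0)=\beta(\rho_0)$ as elements of $L^\infty$, and since $\rho$ is a renormalized solution of \cref{eq:conteq}, $\tilde\beta(\rho)$ is a weak solution of \cref{eq:conteq} with initial datum $\beta(\rho_0)$; by \cref{prop:wellposedtrans} it admits a representative $w\in C_{w^*}([0,\infty);L^\infty(\R^2))$ with $w(0)=\beta(\rho_0)$ and $w(t)=\tilde\beta(\rho(t))$ for a.e.\ $t\ge 0$.

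The next step is to upgrade the distributional formulation of $w$ to the integrated one: testing the weak formulation against functions of the form $\phi(x)\,\eta_\eps(s)$ with $\eta_\eps$ a Lipschitz approximation of $\mathbf{1}_{[0,t]}$ (admissible after a routine mollification in time) and letting $\eps\to0$, the time-derivative term produces $-\int_{\R^2}w(t)\phi\dd x$ by weak-$*$ continuity of $w$, while the flux term converges to $\int_0^t\int_{\R^2}\beta(\rho)\,u\cdot\nabla\phi\dd x\dd s$ (using $\tilde\beta(\rho)=\beta(\rho)$, boundedness of $\beta(\rho)$ and $u\in L^2_\loc$). This yields, for every $t>0$,
\[
\int_{\R^2}w(t)\phi\dd x-\int_{\R^2}\beta(\rho_0)\phi\dd x=\int_0^t\int_{\R^2}\beta(\rho)\,u\cdot\nabla\phi\dd x\dd s .
\]

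It then remains to identify $w(t)$ with the literal composition $\beta(\rho(t))$ for every $t$, $\rho$ being the $C_{w^*}$-representative; this is the one genuinely non-formal point, because nonlinear composition need not commute with weak-$*$ limits. I would obtain it from the strong continuity $\rho\in C([0,\infty);L^2_\loc(\R^2))$: along any sequence $t_k\to t_0$ one then has $\rho(t_k)\to\rho(t_0)$ in $L^1_\loc$, hence a.e.\ up to a subsequence, hence $\tilde\beta(\rho(t_k))\to\tilde\beta(\rho(t_0))$ a.e.\ locally by continuity of $\tilde\beta$, and by boundedness $\tilde\beta(\rho(t_k))\wstarlim\tilde\beta(\rho(t_0))$; thus $t\mapsto\tilde\beta(\rho(t))$ belongs to $C_{w^*}([0,\infty);L^\infty(\R^2))$ and, coinciding a.e.\ in $t$ with $w$, it equals $w$ everywhere. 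Since $\rho(t)$ is valued in $[c_0,C_0]$ we have $\tilde\beta(\rho(t))=\beta(\rho(t))=w(t)$, which turns the displayed identity into the claim and simultaneously gives $\beta(\rho)=w\in C_{w^*}([0,\infty);L^\infty(\R^2))$. The required strong continuity $\rho\in C([0,\infty);L^2_\loc)$ is a standard fact from transport theory: it follows from the Lagrangian representation $\rho(t,\cdot)=\rho_0\circ X(t,\cdot)^{-1}$ of the renormalized solution, with $X$ the measure-preserving regular Lagrangian flow of $u$ (\cf \cite[Prop.~2.3]{AmbrosioCrippa2014}), together with $L^1_\loc$-continuity of $t\mapsto X(t,\cdot)$ and a density argument in $\rho_0$; alternatively, one can renormalize \cref{eq:conteq} with $r\mapsto r^2$ and combine weak-$*$ continuity of $\rho$ and of $\rho^2$ with lower semicontinuity of the $L^2_\loc$-norm. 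The step I expect to be the real obstacle is precisely this trace identification: everything else is the standard reduction plus the usual passage from the distributional to the pointwise-in-time formulation.
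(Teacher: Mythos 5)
Your overall architecture is sound and genuinely different from the paper's. The paper does not pass through the abstract renormalization statement at all: it mollifies $\rho$ \emph{in space only}, shows $\rho_\gamma=\rho*\varphi_\gamma$ is continuous on $[0,T]\times\R^2$, justifies the chain rule for $\beta(\rho_\gamma)$ by a further $C^1$-approximation, and passes to the limit $\gamma\to0$ with the DiPerna--Lions commutator $R_\gamma=\div(\rho_\gamma u-(\rho u)_\gamma)\to0$. Because the mollification is only in space, the a.e.\ pointwise convergence $\rho_\gamma(t,\cdot)\to\rho(t,\cdot)$ holds at \emph{every fixed} $t$ for the chosen $C_{w^*}$-representative, so the identity is obtained for every $t$ directly and the representative-identification problem never arises. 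Your route (extend $\beta|_{[c_0,C_0]}$ to $\tilde\beta\in C^1(\R)$ using \cref{rmk:novacuum}, invoke the renormalization property behind \cref{prop:wellposedtrans}, upgrade to the time-integrated form, then identify the trace) is softer and shorter, but it shifts all the difficulty into exactly the point you flag: showing $w(t)=\beta(\rho(t))$ for \emph{every} $t$, which, as you note, requires some strong-in-time information on $\rho$ that the paper never needs at this stage.

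Concerning your two justifications of $\rho\in C([0,\infty);L^2_{\loc}(\R^2))$: the second one does not work as stated. From \cref{prop:wellposedtrans} applied to $r\mapsto r^2$ you only get \emph{some} $v\in C_{w^*}([0,\infty);L^\infty(\R^2))$ with $v(t)=\rho(t)^2$ for a.e.\ $t$; at an exceptional time $t_0$, weak-$*$ convergence $\rho(t_k)\wstarlim\rho(t_0)$ together with $\rho(t_k)^2=v(t_k)\wstarlim v(t_0)$ and lower semicontinuity yields only $\rho(t_0)^2\le v(t_0)$ a.e., not equality, so strong continuity does not follow -- identifying $v(t_0)$ with $\rho(t_0)^2$ is precisely an instance of the trace problem you are trying to solve (and note that the paper's \cref{lem:contieqcontiL2} obtains $\rho,\rho^2\in C_{w^*}$ \emph{from} the present lemma, so leaning on it here would be circular). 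The inequality can be upgraded to an equality, e.g.\ by restarting the continuity equation at $t_0$ with datum $\rho(t_0)$ and using uniqueness plus renormalization of that restarted problem, but that extra argument has to be supplied. Your first justification (Lagrangian representation $\rho(t)=\rho_0\circ X(t,\cdot)^{-1}$, strong $L^p_{\loc}$-continuity of the Lagrangian solution, then matching with the $C_{w^*}$-representative at every $t$ since both agree a.e.\ and are weak-$*$ continuous) is correct, but be aware that it cannot be based on the flows constructed in \cref{prop:propflowII}, whose hypotheses \cref{ass:velocityflowtwo} a Leray--Hopf velocity does not satisfy; you must invoke the DiPerna--Lions/Ambrosio theory of regular Lagrangian flows for divergence-free $L^1_tW^{1,2}_{\loc}$ fields with the growth condition (or, equivalently, cite the strong time-continuity statements of \cite{DiPernaLions1989} directly). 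With that external input made explicit, your proof is complete; without it, the trace identification remains the open gap.
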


For the proof of \cref{lem:weakcontibeta}, see \cref{sec:appendixB}. We finish this section with the following statement.

\begin{lemma}\label{lem:contieqcontiL2}
    Suppose that $u$ satisfies \cref{ass:velocity} and assume that
    \begin{align*}
        \rho_0 \in L^2(\R^2) \cap L^\infty(\R^2).
    \end{align*}
    Let $\rho \in C_{w^*}([0,T];L^\infty(\R^2))$ be the unique solution of \cref{eq:conteq} with respect to the data $\rho_0$ and $u.$ Then we have that 
    \begin{align*}
        \rho \in C([0,T];L^2(\R^2)).
    \end{align*}
\end{lemma}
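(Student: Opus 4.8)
The plan is to combine three ingredients: the weak-$*$ continuity already guaranteed by \cref{prop:wellposedtrans}, conservation of the $L^2$-norm along the flow (which is the "renormalized" estimate applied to $\beta(s)=s^2$), and the standard fact that weak convergence together with norm convergence implies strong convergence in a Hilbert space. Concretely, I would first note that since $\rho_0 \in L^2(\R^2) \cap L^\infty(\R^2)$, the renormalization theory of DiPerna–Lions applies with the admissible nonlinearity $\beta(s) = s^2$ (after the usual truncation argument, since $\rho$ is bounded this causes no trouble), giving that $\rho^2$ is a weak solution of the continuity equation with datum $\rho_0^2 \in L^1(\R^2)$. Because $\div u = 0$, testing the equation for $\rho^2$ against a cutoff that increases to $1$ and using the growth control $\tfrac{u}{1+|x|} \in L^1_\loc(I;L^1+L^\infty)$ from \cref{ass:velocity} to kill the boundary term, one obtains the conservation law
\begin{align*}
    \int_{\R^2} \rho(t)^2 \dd x = \int_{\R^2} \rho_0^2 \dd x \qquad \text{for every } t \in [0,T].
\end{align*}
In particular $t \mapsto \norm{\rho(t)}_2$ is constant, hence continuous.

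Next I would upgrade weak-$*$ continuity in $L^\infty$ to weak continuity in $L^2$. Given $t_n \to t$ in $[0,T]$, the family $\{\rho(t_n)\}$ is bounded in $L^2 \cap L^\infty$ by the conservation law and the maximum principle, so every subsequence has a further subsequence converging weakly in $L^2$ to some limit $\eta$; but \cref{prop:wellposedtrans} already tells us $\rho(t_n) \wstarlim \rho(t)$ in $L^\infty$, and testing against $C^\infty_c(\R^2)$ functions (which are dense in $L^2$ and the bound is uniform) forces $\eta = \rho(t)$. Hence $\rho(t_n) \wlim \rho(t)$ in $L^2$ along the full sequence. Combining this with the norm convergence $\norm{\rho(t_n)}_2 = \norm{\rho_0}_2 = \norm{\rho(t)}_2$ and the Radon–Riesz property of the Hilbert space $L^2(\R^2)$, we conclude $\rho(t_n) \to \rho(t)$ strongly in $L^2$. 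Since $t_n \to t$ was arbitrary, $\rho \in C([0,T];L^2(\R^2))$.

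The main obstacle is the rigorous justification of the $L^2$-conservation identity: one must verify that $\beta(s)=s^2$ is an admissible renormalizer for a merely $L^\infty$ (not $L^2$) velocity field in the DiPerna–Lions framework — here one uses that $\rho$ is bounded, so only the behaviour of $\beta$ on a compact interval matters and a cutoff of $\beta$ reduces to the $C^1_c$ case of \cref{def:weakrensol} — and, more delicately, that one can actually pass from the renormalized weak formulation (which involves test functions in $C^\infty_c(I\times\R^2)$) to the global-in-space identity above. This requires choosing spatial cutoffs $\chi_R(x) = \chi(x/R)$, writing the weak formulation of $\rho^2$ against $\psi(s)\chi_R(x)$, and controlling the term $\int \rho^2 u \cdot \nabla \chi_R$ as $R \to \infty$; this is exactly where \cref{ass:velocity} enters, via $\norm{u/(1+|x|)}_{L^1(I;L^1+L^\infty)}$ and the fact that $\nabla\chi_R$ is supported in $\{|x|\sim R\}$ with $|\nabla\chi_R| \lesssim 1/R$, so the integrand is dominated by an $L^1$ function times $\rho^2 \in L^\infty_t L^1_x$, which tends to $0$ by dominated convergence. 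Everything else is soft functional analysis.
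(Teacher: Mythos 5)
Your route is essentially the paper's: renormalize with $\beta(s)=s^2$ (admissible since $\rho$ is bounded, so only a compact range of $\beta$ matters), control spatial infinity through the growth condition in \cref{ass:velocity}, obtain continuity of $t\mapsto\norm{\rho(t)}_2$ (in your case even constancy) together with weak $L^2$-continuity in time deduced from the weak-$*$ continuity in $L^\infty$, and conclude by the Radon--Riesz property of the Hilbert space $L^2$. The paper compresses the middle step into a citation of the truncation argument of \cite[Cor.~II.1]{DiPernaLions1989} and then invokes ``elementary arguments'' for exactly your final step, so the comparison is mainly that you spell out what the paper delegates.

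There is, however, one genuine gap in the step you yourself single out as the delicate one: the justification of the $L^2$-identity is circular as written. To kill the cutoff term $\int_0^t\int_{\R^2}\rho^2\,u\cdot\nabla\chi_R\dd x\dd s$ you dominate it by ``an $L^1$ function times $\rho^2\in L^\infty_t L^1_x$''. But the uniform bound $\sup_{t\in[0,T]}\norm{\rho(t)}_2<\infty$ is not among the hypotheses of the lemma --- $\rho$ is only assumed to be the unique solution in $C_{w^*}([0,T];L^\infty(\R^2))$ --- and it is precisely what the conservation law is supposed to deliver. Concretely, after splitting $u/(1+\abs{x})=u_1+u_2\in L^1_tL^1_x+L^1_tL^\infty_x$, the $u_1$-part is indeed handled by $\norm{\rho}_\infty^2$ alone, but the $u_2$-part is bounded by $\int_0^t\norm{u_2(s)}_\infty\int_{\{\abs{x}\sim R\}}\rho(s)^2\dd x\dd s$, and without an a priori $L^2$ bound there is no integrable dominating function, so dominated convergence cannot be invoked. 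Two standard repairs: (i) construct, via the DiPerna--Lions theory, a solution with data $\rho_0\in L^2\cap L^\infty$ lying in $C([0,T];L^2)\cap L^\infty$ and identify it with $\rho$ through the uniqueness of bounded renormalized solutions in \cref{prop:wellposedtrans}; or (ii) keep your cutoff computation but first extract the a priori bound from it, e.g.\ by iterating the inequality $\phi_R(t)\le\norm{\rho_0}_2^2+C+C\int_0^t g(s)\,\phi_{2R}(s)\dd s$, where $\phi_R(t)=\int\rho(t)^2\chi_R\dd x$ and $g(s)=\norm{u_2(s)}_\infty\in L^1(0,T)$: after $n$ iterations the remainder carries a factor $(C\norm{g}_{L^1})^n/n!$ against the crude bound $\phi_{2^nR}\lesssim 4^nR^2\norm{\rho}_\infty^2$, which yields $\sup_R\phi_R(t)<\infty$. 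With that bound in hand, your conservation identity, the upgrade to weak $L^2$-continuity, and the Radon--Riesz conclusion all go through as you wrote them.
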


\begin{proof}
    As in the proof of the previous lemma, one can show that
    \begin{align*}
        \rho, \rho^2 \in C_{w^*}([0,T];L^\infty(\R^2)).
    \end{align*}
    Using the same truncation argument as in the proof of \cite[Cor. II.1]{DiPernaLions1989}, we get that, for every $t \in [0,T]$, 
    \begin{align*}
        \rho \in C_w([0,T];L^2(\R^2)) \quad \text{ and } \quad \norm{\rho(t)}_2 \in C(I).
    \end{align*}
    Now, it follows by elementary arguments that $\rho \in C([0,T];L^2(\R^2)).$
\end{proof}

\subsection{The flow associated to non-Lipschitz vector fields}

In this section, we study the flow associated to a vector field $u$ satisfying the following assumptions
\begin{align}\label{ass:velocityflowtwo}
    u \in L^1_\loc(0,\infty;H^1(\R^2) \cap W^{1,4}(\R^2)), \quad u \in C_b([\eps,\infty) \times \R^2), \quad u \in L^2 ([\eps,\infty); W^{2,4}(\R^2)).
\end{align}

\begin{proposition}\label{prop:propflowII}
    Assume that $u$ satisfies \cref{ass:velocityflowtwo}. Then there exists a unique mapping $X \in C((0,\infty)^2 \times \R^2)$ such that the following properties hold.
    \begin{enumerate}[label=(\roman*)]
        \item \label{it:ODEflowII} For every $s \in (0,\infty)$ and every $x \in \R^2$, $X(\cdot,s,x) \in C^1((0,\infty))$ and $X(\cdot,s,x)$ satisfies the ODE
        \begin{align}\label{eq:ODEflowII}
            \begin{cases}
                \partial_t X(t,s,x) = u(t,X(t,s,x)),\quad t>0\\
                X(s,s,x) = x.
            \end{cases}
        \end{align}
        \item \label{it:flowC1diffeoII} For every $t,s>0$, $X(t,s.\cdot)$ is $C^1$-diffeomorphism and $X^{-1}(t,s,\cdot)=X(s,t,\cdot)$. Moreover, for every $t,s>0$, we have that $J(t,s) = \det (\nabla_x X(t,s,x)) \equiv 1.$
        \item \label{it:invflowtransportII} For every $t \in (0,\infty),$ the mapping $X(t,\cdot,\cdot) \colon (0,\infty) \times \R^2 \to \R^2$ is in the class $C^1((0,\infty) \times \R^2)$ and 
        \begin{align*}
            \partial_s X(t,s,x) + (u(s,x) \cdot \nabla) X(t,s,x) = 0.
        \end{align*}
        \item \label{it:LinftyboundflowII} For every $t>s>0,$ we have
        \begin{align}\label{eq:LinftyboundflowII}
            \norm{DX(t,s,\cdot)}_\infty \leq \exp \left( \int_s^t \norm{\nabla u(z)}_\infty \dd z \right)
        \end{align}
        \item \label{it:solucontieq} Let $f \in L^2_\loc([0,\infty);L^2(\R^2)) \cap L^\infty((0,\infty) \times \R^2)$. Assume that $\rho \in C_{w^*}([0,\infty); L^\infty(\R^2))$ satisfies, in the sense of distributions,
        \begin{align*}
            \partial_t \rho + \div(\rho u) = f,
        \end{align*}
        Then we have, for every $s,\tau>0$ and almost every $x \in \R^2$,
        \begin{align}\label{eq:solucontieq}
            \rho(s,x) = \rho(\tau,X(\tau,s,x)) + \int_\tau^s f(z,X(z,s,x)) \dd z.
        \end{align}
    \end{enumerate}
\end{proposition}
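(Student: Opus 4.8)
The plan is to treat this as a standard Cauchy--Lipschitz / ODE-flow construction, exploiting that under \cref{ass:velocityflowtwo} the vector field $u(s,\cdot)$ is, for each fixed $s>0$, globally Lipschitz and bounded on $\R^2$ (since $u\in C_b([\eps,\infty)\times\R^2)$ for every $\eps>0$ and $\nabla u\in L^2([\eps,\infty);W^{2,4}(\R^2))\hookrightarrow L^2([\eps,\infty);L^\infty(\R^2))$ by Sobolev embedding in dimension two). The key point is that the ODE in \eqref{eq:ODEflowII} has a \emph{frozen} right-hand side: the vector field appearing is $x\mapsto u(s,x)$ with $s$ fixed, \emph{not} $x\mapsto u(t,x)$. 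Hence, for each fixed $s$, \eqref{eq:ODEflowII} is an autonomous ODE in $t$ with a Lipschitz and sublinear (indeed bounded) right-hand side, so global existence, uniqueness, and $C^1$-dependence on $t$ follow from the classical Picard--Lindel\"of theorem; this gives \cref{it:ODEflowII}. Joint continuity of $X$ in $(t,s,x)$ and the $C^1$-diffeomorphism property in $x$ (item \cref{it:flowC1diffeoII}) follow from differentiating the integral equation $X(t,s,x)=x+\int_s^t u(s,X(r,s,x))\dr$ in $x$ and solving the resulting linear variational ODE for $DX$; the inverse relation $X^{-1}(t,s,\cdot)=X(s,t,\cdot)$ is immediate from uniqueness, since $X(s,t,\cdot)$ solves the same ODE (same frozen field $u(s,\cdot)$) with the roles of initial and terminal time swapped. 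Because $\div u(s,\cdot)=0$ for a.e.\ $s$, Liouville's formula gives $\partial_t J(t,s)=(\div u)(s,X(t,s,x))\,J(t,s)=0$, and $J(s,s)=1$, so $J\equiv 1$.

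For \cref{it:invflowtransportII}, I would differentiate the identity $X(t,s,X(s,t,y))=y$ — or, more directly, fix $t$ and note that $\tilde X(s,y):=X(t,s,y)$ should satisfy a transport equation in $s$. The cleanest route: from $\partial_t X(t,s,x)=u(s,X(t,s,x))$ one sees by the inverse-flow identity that $s\mapsto X(t,s,x)$ is the solution at "time" $s$ of a backward problem; differentiating $X(t,s,X(s,s_0,x))=X(t,s_0,x)$ in $s$ and using the chain rule together with \eqref{eq:ODEflowII} yields $\partial_s X(t,s,\cdot)+(u(s,\cdot)\cdot\nabla)X(t,s,\cdot)=0$. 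Regularity in $(s,x)$ here requires a little care: one needs $u$ continuous in time, which holds on $[\eps,\infty)\times\R^2$, so the identity holds on $(0,\infty)\times\R^2$ by exhausting with $\eps\downarrow 0$. The bound \cref{it:LinftyboundflowII} is Gr\"onwall applied to the variational equation $\partial_t DX(t,s,x)=\nabla u(s,X(t,s,x))\,DX(t,s,x)$ with $DX(s,s,x)=\mathrm{Id}$, giving $\norm{DX(t,s,\cdot)}_\infty\le\exp(\int_s^t\norm{\nabla u(z)}_\infty\dz)$ once one observes that $\nabla u(z)$ is evaluated along the flow but the sup-norm bound is uniform in $x$.

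The substantive step — and the one I expect to be the main obstacle — is \cref{it:solucontieq}, the representation formula for solutions of the inhomogeneous continuity equation along the flow. The strategy is the method of characteristics made rigorous: formally, $\frac{d}{dz}\big[\rho(z,X(z,s,x))\big]=(\partial_t\rho+u\cdot\nabla\rho)(z,X(z,s,x))=f(z,X(z,s,x))$, and integrating from $\tau$ to $s$ gives \eqref{eq:solucontieq}. To justify this for $\rho\in C_{w^*}([0,\infty);L^\infty)$ merely a distributional solution, one proceeds by duality/regularization: test the equation $\partial_t\rho+\div(\rho u)=f$ against $\varphi(z,y):=\psi(z)\,\eta(X(s,z,y))$ for $\psi\in C_c^\infty$ and $\eta\in C_c^\infty(\R^2)$, using item \cref{it:invflowtransportII} (the transport identity for $z\mapsto X(s,z,y)$) to see that $\partial_z\varphi+u\cdot\nabla\varphi=\psi'(z)\eta(X(s,z,y))$, together with $\div u=0$ and $J\equiv 1$ (volume preservation) to push forward; after a change of variables $y\mapsto X(z,s,x)$ this produces exactly the weak form of $\frac{d}{dz}\int\rho(z,X(z,s,x))\eta\,dx=\int f(z,X(z,s,x))\eta\,dx$, and since $\eta$ is arbitrary and both sides are continuous in $z$ (using $\rho\in C_{w^*}$, $f\in L^2_\loc L^2\cap L^\infty$, and continuity of $X$), we integrate in $z$ to obtain \eqref{eq:solucontieq} for a.e.\ $x$. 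The delicate points are: (a) $\eta(X(s,\cdot,\cdot))$ is an admissible test function — it is $C^1$ in $(z,y)$ on $(0,\infty)\times\R^2$ and compactly supported in $y$ for each $z$ (by properness of the diffeomorphism), but one must handle $z\to 0$ by a cutoff and then remove it using that $\rho$ is bounded and weak-$*$ continuous up to $0$; (b) the change of variables is clean precisely because $J\equiv1$; (c) one should first prove the formula for smooth $\rho$ and $f$ and pass to the limit, or invoke the DiPerna--Lions theory (\cite[Prop. 2.3]{AmbrosioCrippa2014}) directly once the flow regularity in items \cref{it:ODEflowII}--\cref{it:invflowtransportII} has been established. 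I would present (c) as the main line: the abstract transport-equation solution formula applies verbatim since $u$ satisfies the required local regularity, and our constructed $X$ agrees with the DiPerna--Lions regular Lagrangian flow by uniqueness.
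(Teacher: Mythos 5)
There is a genuine gap, and it sits at the very foundation of your argument. You build everything on the claim that the right-hand side of \eqref{eq:ODEflowII} is ``frozen'' at time $s$, so that the ODE is autonomous and classical Picard--Lindel\"of applies. The display $\partial_t X(t,s,x)=u(s,X(t,s,x))$ is a typo in the statement (compare the proof of the analogous \cref{prop:propflowI}, where the flow is written as $X(t,s,x)=x+\int_s^t u(z,X(z,s,x))\dd z$): the intended object is the two-parameter flow of the \emph{time-dependent} field, $\partial_t X(t,s,x)=u(t,X(t,s,x))$. Under your frozen reading, items \cref{it:flowC1diffeoII}--\cref{it:solucontieq} are simply false: $X(s,t,\cdot)$ would be the flow of the different frozen field $u(t,\cdot)$, so the inverse relation $X^{-1}(t,s,\cdot)=X(s,t,\cdot)$ fails (your ``same frozen field $u(s,\cdot)$'' is incorrect even on your own terms); the transport identity in \cref{it:invflowtransportII} would require differentiating $u(s,\cdot)$ in $s$, which is not assumed; the bound in \cref{it:LinftyboundflowII} would read $\exp\left((t-s)\norm{\nabla u(s)}_\infty\right)$ rather than an integral in $z$; and the characteristics of $\partial_t\rho+\div(\rho u)=f$ are those of the time-dependent field, so \eqref{eq:solucontieq} would not hold. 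Your own computation for \cref{it:solucontieq}, $\tfrac{\d}{\d z}[\rho(z,X(z,s,x))]=(\partial_t\rho+u\cdot\nabla\rho)(z,X(z,s,x))$, silently uses $\partial_z X(z,s,x)=u(z,X(z,s,x))$, i.e.\ the non-frozen flow, so the proposal is internally inconsistent. You should have flagged the display as a typo rather than promoted it to the ``key point''.

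Even after correcting the reading, the proposal skates over the actual difficulty that \eqref{ass:velocityflowtwo} gives no continuity of $\nabla u$ in time (only $\nabla u\in L^2([\eps,\infty);W^{2,4})$, hence an $L^2_\loc$-in-time Lipschitz bound), so the classical $C^1$-dependence theory does not apply verbatim: the variational equation for $DX$ only holds in integral (Carath\'eodory) form, and the joint $C^1((0,\infty)\times\R^2)$ regularity claimed in \cref{it:invflowtransportII} is precisely where the work lies --- your ``differentiate the semigroup identity in $s$'' presupposes the differentiability you are trying to prove. The paper avoids all of this by mollifying, $u_n=\varphi_n*u$, applying \cref{prop:propflowI} to each $u_n$, and proving local uniform convergence of $X_n$, $DX_n$ and $\partial_s X_n$ via Gr\"onwall estimates in terms of $\norm{u_n-u_m}_{L^1_t W^{2,4}}$, after which \cref{it:ODEflowII}--\cref{it:LinftyboundflowII} pass to the limit. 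For \cref{it:solucontieq} your option (c) (reduce to DiPerna--Lions uniqueness) is the right idea and is in the same spirit as the paper, but it cannot be a verbatim citation of \cite[Prop.~2.3]{AmbrosioCrippa2014}: one needs the two-parameter, inhomogeneous version between positive times. The paper does this by proving $L^2$-boundedness of $f\mapsto\int_\tau^s f(z,X(z,s,\cdot))\dd z$ (Minkowski's inequality plus $J\equiv1$), checking that the candidate right-hand side of \eqref{eq:solucontieq} is a bounded weak solution of the backward problem, and invoking uniqueness from \cref{prop:wellposedtrans}; your duality argument with the test function $\psi(z)\eta(X(s,z,y))$ also needs an extra smoothing step, since that function is only $C^1$ and not an admissible $C^\infty_c$ test function.
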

\noindent
For the proof of \cref{prop:propflowII}, see \cref{sec:appendixB}. Most of the proof is quite elementary, but it is given for the sake of completeness.

\begin{remark}
    Let $f \in L^2_\loc([0,\infty);L^2(\R^2))$ and fix $T>0.$ The mapping 
    \begin{align*}
        [\tau,T] \times \R^2 \ni (s,x) \mapsto \int_\tau^s f(z,X(z,s,x)) \dd z
    \end{align*}
    has to be understood as the $L^2$-limit of the functions
    \begin{align*}
        [\tau,T] \times \R^2 \ni (t,x) \mapsto \int_\tau^s f_n(z,X(z,s,x)) \dd z,
    \end{align*}
    where $(f_n)_{n \in \N}$ is a sequence in $C^\infty_c((\tau,T) \times \R^2)$ such that $f_n \to f$ in $L^2([\tau,T];L^2(\R^2)).$
\end{remark}

We finish this section with an important statement about the backward transport equation
\begin{align}\label{eq:backwardtransport}
    \begin{cases}
        \partial_\tau \rho + \div(\rho u) = 0, \quad \tau<s \\
        \rho(s) = \rho_s.
    \end{cases}
\end{align}
Note that all statements previously stated for \cref{eq:conteq} are also true for \cref{eq:backwardtransport}.
\begin{lemma}\label{lem:transportcontiL1}
    Suppose that $u$ satisfies \cref{ass:velocityflow} and let $\rho \in C_{w^*}([0,s];L^\infty(\R^2))$ be the unique solution of \cref{eq:backwardtransport} with terminal value $\rho_s \in C^\infty_c(\R^2).$ Then there exists an $R>0$ such that, for every $\tau \in (0,s]$,
    \begin{align*}
        \supp \{ \rho(\tau) \} \subset B_R(0).
    \end{align*}
    Furthermore, we also have that $\rho \in C([0,s];L^1(\R^2))$.
\end{lemma}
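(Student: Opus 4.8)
The plan is to exploit the method of characteristics provided by the flow $X$ associated to $u$. First I would record that under the stated hypotheses on $u$ (the assumption labelled \cref{ass:velocityflow}, which should guarantee exactly the structure needed to apply the flow theory of \cref{prop:propflowII} on the relevant time interval) the backward problem \cref{eq:backwardtransport} has a unique solution representable along characteristics. Concretely, since the Jacobian satisfies $J(t,s)\equiv 1$ and $X^{-1}(t,s,\cdot)=X(s,t,\cdot)$, the solution of \cref{eq:backwardtransport} with terminal datum $\rho_s$ is $\rho(\tau,x)=\rho_s(X(s,\tau,x))$ for $\tau\in(0,s]$. This is the analogue of \cref{eq:solucontieq} with $f\equiv 0$ applied to the backward equation.

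Next I would establish the compact support claim. Because $\rho_s\in C^\infty_c(\R^2)$, say $\supp\rho_s\subset B_{R_0}(0)$, the support of $\rho(\tau,\cdot)$ is contained in $X(\tau,s,\supp\rho_s)$, i.e.\ the image under the (measure-preserving) flow map of a fixed compact set. So it suffices to bound $\sup_{\tau\in(0,s]}\sup_{y\in B_{R_0}(0)}|X(\tau,s,y)|$. Integrating the ODE \cref{eq:ODEflowII} gives $|X(\tau,s,y)-y|\le \int_\tau^s |u(z,X(z,s,y))|\,\d z$. Here I would use that $u\in C_b([\eps,\infty)\times\R^2)$ for every $\eps>0$ to control the integrand on any interval bounded away from $0$, and near $\tau=0$ I would instead use the Gronwall-type bound on the displacement coming from $u\in L^1_\loc(0,\infty;W^{1,q})$ together with the $L^\infty$-bound on $\nabla u$ — or, more simply, observe that \cref{eq:backwardtransport} is being solved only down to times $\tau>0$ and that on the whole of $(0,s]$ one has, after a change of variables $y=X(s,\tau,x)$ and using $J\equiv1$, the a priori estimate $\|X(\tau,s,\cdot)-\mathrm{id}\|_{L^\infty(B_{R_0})}\le \int_0^s \|u(z)\|_{L^\infty}\,\d z<\infty$ by \cref{ass:velocityflow}. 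This produces a uniform $R>0$ with $\supp\rho(\tau)\subset B_R(0)$ for all $\tau\in(0,s]$.

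For the continuity in $L^1$, I would combine the compact support with the already-available weak-$*$ continuity in $L^\infty$ and a truncation/renormalization argument of exactly the DiPerna–Lions type quoted in the proof of \cref{lem:contieqcontiL2}. Since $\rho\in C_{w^*}([0,s];L^\infty)$ and all the $\rho(\tau)$ are supported in the fixed ball $B_R(0)$ with $\sup_\tau\|\rho(\tau)\|_\infty<\infty$, the map $\tau\mapsto\rho(\tau)$ is weakly continuous in $L^2(B_R)$; testing the renormalized equation with $\beta(r)=r^2$ (or $\beta(r)=|r|$) shows $\tau\mapsto\|\rho(\tau)\|_{L^2}$ is continuous, hence $\rho\in C([0,s];L^2(\R^2))$ by weak continuity plus norm continuity, and then $\rho\in C([0,s];L^1(\R^2))$ follows from the uniform compact support via Cauchy–Schwarz on $B_R$. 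Alternatively, and perhaps most cleanly, one reads off continuity directly from the representation $\rho(\tau,\cdot)=\rho_s(X(s,\tau,\cdot))$: joint continuity of $X$ on $(0,s]^2\times\R^2$ from \cref{prop:propflowII}, uniform continuity of $\rho_s$, and the uniform support bound give $\rho(\tau)\to\rho(\tau_0)$ in $L^1$ as $\tau\to\tau_0$ by dominated convergence.

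The main obstacle I anticipate is controlling the behaviour as $\tau\downarrow 0$: the hypotheses on $u$ only give $L^1_\loc$-in-time regularity on $(0,\infty)$ and continuity on $[\eps,\infty)$, so the flow map $X(\tau,s,\cdot)$ need not extend continuously to $\tau=0$, and a priori the support could spread. The resolution is that the lemma only asserts support and $L^1$-continuity on $(0,s]$ (respectively the continuity statement is on $[0,s]$ but $\rho(0)$ is determined by the unique $C_{w^*}$ solution), and that $\int_0^s\|u(z)\|_{L^\infty}\,\d z<\infty$ is guaranteed by \cref{ass:velocityflow}, so the displacement stays bounded uniformly down to $\tau=0$ even though the flow itself may degenerate; care is needed to phrase this via the measure-preserving change of variables rather than pointwise ODE bounds near $\tau=0$.
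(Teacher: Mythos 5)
Your proposal is correct and follows essentially the same route as the paper: the representation $\rho(\tau,\cdot)=\rho_s(X(s,\tau,\cdot))$ from \cref{prop:propflowII} \cref{it:solucontieq}, the uniform displacement bound $\int_0^s \norm{u(z)}_\infty \dd z<\infty$ coming from $u\in L^1(0,s;W^{1,4}(\R^2))$, and $L^1$-continuity via the $L^2$-continuity of \cref{lem:contieqcontiL2} combined with Cauchy--Schwarz on the fixed ball. The only differences are cosmetic: the paper obtains the uniform support bound by a contradiction argument rather than as the image of $\supp\{\rho_s\}$ under the flow, and at $t_0=0$ it identifies the $L^1$-limit with $\rho(0)$ through the $L^2$-convergence (which in particular guarantees that $\rho(0)$ is also supported in $B_R(0)$, a point your Cauchy--Schwarz step should state explicitly).
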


\begin{proof}
    Fix $R_1>0$ such that $\supp \{ \rho_s \} \subset B_{R_1}(0)$. From the assumption $u \in L^1(0,s;W^{1,4}(\R^2))$, we have that $u \in L^1(0,T;L^{\infty}(\R^2))$, and we can estimate 
    \begin{align*}
        \abs{X(s,\tau,x)} &= \abs{x + \int_\tau^s u(z,X(z,\tau,x))\dd s} \leq  \abs{x} + \int_0^s \norm{u(z)}_\infty \leq  \abs{x} + \int_0^T \norm{u(z)}_{W^{1,4}} \eqqcolon \abs{x} + R_2
    \end{align*}
    Let $\rho$ be the unique solution of 
    \begin{align*}
        \partial_t \rho + \div(\rho u) = 0
    \end{align*}
    such that $\rho(s)=\rho_s.$ Then, by \cref{prop:propflowII} \cref{it:solucontieq}, we have that, for every $\tau > 0$ and almost every $x \in \R^2$,
    \begin{align}\label{eq:formulatransportproof}
        \rho(\tau,x) = \rho_s(X(s,\tau,x)).
    \end{align}
    Note that $\rho_s(X(s,\cdot,\cdot)) \in C((0,s] \times \R^2)$ by \cref{prop:propflowII} \cref{it:invflowtransportII}. Thus, from now on, we can assume that \cref{eq:formulatransportproof} is true for every $\tau \in (0,s]$ and every $x \in \R^2.$
    
    Assume for contradiction that the statement is false. Then, for every $n \in \N,$ there exists a pair $(\tau_n,x_n)$, with $\tau_n \in (0,T]$ and $\abs{x_n} \geq n,$ such that
    \begin{align}\label{eq:notzero}
        \rho(\tau_n,x_n) \neq 0.
    \end{align}
    Pick $n \in \N$ such that $n > R_1 + R_2$ and a pair $(\tau_n,x_n) \in (0,s] \times \R^2$, with $\tau_n \in (0,s]$ and $\abs{x_n} \geq n,$ such that \cref{eq:notzero} is true. We define
    \begin{align*}
        y_n \coloneqq X(s,\tau_n,x_n) = x_n + \int_{\tau_n}^s u(z,X(z,\tau_n,x_n)) \dd z.
    \end{align*}
    Due to \cref{eq:formulatransportproof}, we have
    \begin{align*}
        0 \neq \rho(\tau_n,x_n) = \rho(s,X(s,\tau_n,x_n)) = \rho_s(y_n),
    \end{align*}
    which implies that $y_n \in \supp \{ \rho_s \},$ and we have that $\abs{y_n} \leq R_1.$ From the identity 
    \begin{align*}
        y_n = x_n + \int_{\tau_n}^s u(z,X(z,\tau_n,x_n)) \dd z,
    \end{align*}
    we deduce that
    \begin{align*}
        R_1 & \geq \abs{y_n}
        \geq \abs{x_n} - \abs{\int_{\tau_n}^s u(z,X(z,\tau_n,x_n)) \dd z}
        \geq n - \int_{\tau_n}^s \norm{u(z)}_\infty \dd z
        \geq n - R_2,
    \end{align*}
    which is a contradiction to the assumption $n>R_1+R_2$. 

    Since $\rho_s \in L^2(\R^2) \cap L^\infty(\R^2)$, we deduce by  \cref{lem:contieqcontiL2} that $\rho \in C([0,s];L^2(\R^2)).$ Fix $R>0$ such that, for every $t \in (0,s]$,
    \begin{align*}
        \supp \{ \rho(t) \} \subset B_R(0).
    \end{align*}
    Let $(t_n)_{n \in \N} \subset (0,s]$ and $t_0 \in (0,s]$ such that $t_n \to t_0.$ Then we have
    \begin{align}
        \norm{\rho(t_n)-\rho(t_0)}_{L^1(\R^2)} & = \norm{\rho(t_n)-\rho(t_0)}_{L^1(B_R(0))} \leq R^{1/2} \norm{\rho(t_n)-\rho(t_0)}_{L^2(B_R(0))}\\
        & = R^{1/2} \norm{\rho(t_n)-\rho(t_0)}_{L^2(\R^2)} \underset{n \to \infty}{\longrightarrow} 0.
    \end{align}
    We see that $\rho \in C((0,s];L^1(\R^2))$. For $t_0 = 0,$ we have to argue slightly different. Let $(t_n)_{n \in \N} \subset (0,s]$ such that $t_n \to 0.$ On one hand, the same argument as above shows $(\rho(t_n))_{n \in \N}$ is a Cauchy sequence in $L^1(\R^2)$ with limit $\rho_0.$ On the other hand, by \cref{lem:contieqcontiL2}, we have that $\lim_{n \to \infty} \rho(t_n) = \rho(0)$ in $L^2(\R^2).$ At this point, it is easy to conclude that $\rho_0 = \rho(0),$ which completes the proof.
\end{proof}

\subsection{More estimates on the velocity}

In this section, we collect some estimates on the velocity field of an immediately strong solution. Our goal is to show that the velocity $u$ of an immediately strong solution satisfies \cref{ass:velocityflowtwo}. To this end, let $(\rho,u)$ be an immediately strong solution. From \cite[Prop. 3.2]{Danchin2024}, we know that there is a constant, independent of $(\rho,u)$, such that
\begin{align}\label{eq:Linftyregularity}
     \esssup_{t \in (0,\infty)} t \norm{u(t)}^2_\infty \leq C \norm{(\rho,u)}_Z^2.
\end{align}

\begin{lemma}\label{lem:velocityflow}
    Let $(\rho,u)$ be an immediately strong solution. Then $u$ satisfies \cref{ass:velocityflowtwo}.
\end{lemma}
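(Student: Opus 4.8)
The task is to verify that the velocity $u$ of an immediately strong solution $(\rho,u)$ satisfies the three conditions bundled in \cref{ass:velocityflowtwo}, namely
\begin{align*}
    u \in L^1_\loc(0,\infty;H^1(\R^2) \cap W^{1,4}(\R^2)), \quad u \in C_b([\eps,\infty) \times \R^2), \quad \nabla u \in L^2([\eps,\infty); W^{2,4}(\R^2)),
\end{align*}
for every $\eps>0$. The plan is to extract each of these from the finiteness of $\norm{(\rho,u)}_Z$, i.e.\ from the bounds on $A^0_0, A^0_1, A^0_2$ in \cref{def:Immstrongsol}, together with the $L^\infty$-regularity estimate \cref{eq:Linftyregularity} and standard elliptic (Stokes) regularity applied to the momentum equation. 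The point is purely bookkeeping: all the analytic content is already packaged in the quantities $A^0_i$ and in \cite[Prop. 3.2]{Danchin2024}; one just has to track which weight in $s$ is integrable against which norm.

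\textbf{Step 1: the $L^1_\loc$ condition on $\nabla u$.} Fix $0<\eps<T<\infty$. From $A^0_0(\rho,u)<\infty$ we have $\nabla u \in L^2((0,\infty)\times\R^2) \subset L^2((0,T);L^2(\R^2))$, hence $\nabla u \in L^1((\eps,T);L^2(\R^2))$ by Cauchy--Schwarz in time; combined with $u \in L^\infty((0,\infty);L^2(\R^2))$ (again from $A^0_0$, since $\rho$ is bounded below by $c_0$) this gives $u \in L^1_\loc(0,\infty;H^1(\R^2))$. For the $W^{1,4}$-part, I would interpolate: by Gagliardo--Nirenberg in $\R^2$, $\norm{\nabla u}_4 \lesssim \norm{\nabla u}_2^{1/2}\norm{\nabla^2 u}_2^{1/2}$, so
\begin{align*}
    \int_\eps^T \norm{\nabla u(s)}_4 \dd s \lesssim \int_\eps^T \norm{\nabla u(s)}_2^{1/2}\norm{\nabla^2 u(s)}_2^{1/2}\dd s \lesssim \Big(\int_\eps^T \norm{\nabla u}_2^2\Big)^{1/4}\Big(\int_\eps^T s\norm{\nabla^2 u}_2^2\,\eps^{-1}\Big)^{1/4}T^{1/2},
\end{align*}
which is finite because $\int_0^\infty s\norm{\nabla^2 u}_2^2 \dd s \le A^0_1(\rho,u)<\infty$. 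Controlling $\norm{u}_4$ itself on $(\eps,T)$ is even easier (Ladyzhenskaya plus $A^0_0$). This yields $u \in L^1_\loc(0,\infty;W^{1,4}(\R^2))$.

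\textbf{Step 2: $u \in C_b([\eps,\infty)\times\R^2)$.} Boundedness is immediate from \cref{eq:Linftyregularity}: $\norm{u(t)}_\infty \le (C/\sqrt t)\,\norm{(\rho,u)}_Z$, which is $\le C\eps^{-1/2}\norm{(\rho,u)}_Z$ for $t\ge\eps$. For continuity, I would argue that $u$ lies in a space that embeds into $C([\eps,\infty);C_b(\R^2))$: from the $A^0_i$ bounds one gets, on $[\eps,\infty)$, $u \in L^\infty(\eps,\infty;H^1)$, $\nabla^2 u \in L^2(\eps,\infty;L^2)$ (from $A^0_1$), and $\pt u \in L^2(\eps,\infty;L^2)$ (also from $A^0_1$, using $\rho\ge c_0$ and $\dot u = \pt u + u\cdot\nabla u$ with the $L^\infty$-bound on $u$ to absorb the convective term); hence $u \in C([\eps,\infty);H^1(\R^2))$, and I can bootstrap one more derivative using $A^0_2$ (which controls $s^2\norm{\nabla^2 u}_2^2$ in $L^\infty$ and $\nabla\dot u$, $\partial_s\nabla u$ in weighted $L^2$) to place $u \in C([\eps,\infty);H^2(\R^2))\hookrightarrow C([\eps,\infty);C_b(\R^2))$ in two dimensions. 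Spatial joint continuity then follows.

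\textbf{Step 3: $\nabla u \in L^2([\eps,\infty);W^{2,4}(\R^2))$, equivalently $u \in L^2([\eps,\infty);W^{3,4}(\R^2))$.} This is the main obstacle, since the $A^0_i$ only give $L^2$-type control of up to $\nabla^2 u$ with explicit weights, not $L^4$-based control of third derivatives. The plan is to use the momentum equation as a Stokes system: rewriting $-\nu\Delta u + \nabla P = -\rho\dot u =: g$ with $\div u=0$, elliptic regularity for the stationary Stokes operator gives $\norm{\nabla^2 u}_{L^4} + \norm{\nabla P}_{L^4} \lesssim \norm{g}_{L^4} = \norm{\rho\dot u}_{L^4} \lesssim C_0\norm{\dot u}_{L^4}$, and a further application (differentiating the system) gives $\norm{\nabla^3 u}_{L^4} \lesssim \norm{\nabla(\rho\dot u)}_{L^4} \lesssim \norm{\nabla\rho}\,\norm{\dot u}_{\infty} + C_0\norm{\nabla\dot u}_{L^4}$. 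I then need $\dot u$ (and $\nabla\dot u$) in $L^2_{\mathrm{weight}}$-spaces that, after dividing by $\eps$-powers of $s$, sit in $L^2(\eps,\infty;L^4)$; the quantities $\int s\rho|\dot u|^2$, $\int s^2|\nabla\dot u|^2$ from $A^0_1$, $A^0_2$, combined with Gagliardo--Nirenberg ($\norm{\dot u}_4 \lesssim \norm{\dot u}_2^{1/2}\norm{\nabla\dot u}_2^{1/2}$, $\norm{\nabla\dot u}_4\lesssim\norm{\nabla\dot u}_2^{1/2}\norm{\nabla^2\dot u}_2^{1/2}$ with $\nabla^2\dot u$ controlled in weighted $L^2$ by $A^0_3$, or alternatively an interpolation that avoids $A^0_3$ as the footnote suggests is possible) should close the estimate after a Cauchy--Schwarz in $s$ that trades the $s$-weights against $\eps^{-k}$. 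The delicate point is that $\nabla\rho$ is only a distribution a priori; here I would invoke that $(\rho,u)$ being immediately strong forces $\nabla\rho$ to belong to the relevant Lebesgue space on $(\eps,\infty)$ (this is part of the regularity theory in \cite{Danchin2024} feeding the definition of $Z$), or bound the troublesome term $\nabla\rho\cdot\dot u$ directly using the transport structure of $\rho$ and $\dot u\in L^\infty$ on $[\eps,\infty)$. Assembling Steps 1--3 gives \cref{ass:velocityflowtwo}.
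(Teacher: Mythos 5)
Your Steps 1 and 2 follow essentially the paper's route: Ladyzhenskaya/Gagliardo--Nirenberg interpolation of $\norm{\nabla u}_4$ between $\norm{\nabla u}_2$ and $\norm{\nabla^2 u}_2$ combined with the weighted quantities $A^0_0,A^0_1$, and an Agmon-type interpolation in space-time for the continuity. Two caveats. First, the paper proves the stronger fact $u\in L^1(0,t;W^{1,4}(\R^2))$ \emph{down to} $t=0$ (by keeping the weight $s^{1/2}$ instead of trading $s\geq\eps$ against $\eps^{-1}$), and this integrability at the origin is what is actually used later, e.g.\ in \cref{lem:transportcontiL1}; your version only gives integrability away from $0$. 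Second, your claim $u\in C([\eps,\infty);H^2(\R^2))$ does not follow from the listed bounds (they only yield weak continuity in $H^2$), but this is harmless: as in the paper, $u\in C([\eps,\infty);L^2)\cap L^\infty(\eps,\infty;H^2)$ together with Agmon's inequality already gives continuity in the sup-norm, modulo a careful choice of representatives, which the paper handles via a countable dense set of times.

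The genuine gap is Step 3. To reach third derivatives $\nabla^3 u\in L^4$ you differentiate the Stokes system, which produces the term $\nabla\rho\cdot\dot u$, and you propose to handle it by asserting that immediate strength forces $\nabla\rho$ into a suitable Lebesgue space. That assertion is false: the density of an immediately strong solution is merely a bounded, renormalized solution of the transport equation with $\rho_0\in L^\infty$ only (cf.\ \cref{ass:data1}); none of the quantities $A^0_0,\dots,A^0_3$ in \cref{def:Immstrongsol} involve spatial derivatives of $\rho$, and no spatial regularity of $\rho$ is propagated, so $\nabla\rho$ remains a genuine distribution and the bound $\norm{\nabla^3 u}_4\lesssim\norm{\nabla\rho\cdot\dot u}_4+\norm{\nabla\dot u}_4$ cannot be closed. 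The resolution is that no third-order estimate is needed: the condition in \cref{ass:velocityflowtwo} is to be read (and is only ever used) as $u\in L^2_\loc([\eps,\infty);W^{2,4}(\R^2))$ --- in the proof of \cref{prop:propflowII} only $\norm{u}_{L^2([\sigma,\tau];W^{2,4})}$ enters. Accordingly, the paper applies the $L^4$ Stokes estimate \emph{without} differentiating the equation, $\norm{\nabla^2 u}_4\lesssim\norm{\rho\dot u}_4\lesssim\norm{\dot u}_4\lesssim\norm{\dot u}_2^{1/2}\norm{\nabla\dot u}_2^{1/2}$, and concludes $\int_0^\infty t^4\norm{\nabla^2 u}_4^4\dd t\lesssim\norm{(\rho,u)}_Z^4$ using only the weighted bounds in $A^0_1,A^0_2$ (so that $\nabla^2 u\in L^2_\loc([\eps,\infty);L^4)$), never touching $\nabla\rho$. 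Replacing your differentiated-Stokes step by this undifferentiated one repairs the argument.
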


\begin{proof}
    First we show that, for every $\eps > 0$,
    \begin{align*}
        u \in C_b ([\eps,\infty) \times \R^2).
    \end{align*}
    The boundedness of $u$ follows from \cref{eq:Linftyregularity}. Now we show the $u$ can be modified to continuous on $[\eps,\infty).$ First, since $u \in H^1(\eps,\infty;L^2(\R^2)),$ we can modify $u$, still denotes by $u$, on a negligible set of times such that $u \in C([\eps,\infty);L^2(\R^2)).$ Note that there is a constant $C_1>0$ such that, for every $t_1,t_2 \geq \eps$,
    \begin{align}\label{eq:holdertimeproof}
        \norm{u(t_1)-u(t_2)}_2 \leq C_1 \abs{t_1-t_2}^{1/2}.
    \end{align}
    Since $\esssup_{t \geq \eps} \norm{u(t)}_{H^2} \leq C$, we can find a measurable set $I$ such that $\mathscr{L}^1([0,\infty) \setminus I) = 0$ such that, for every $t \in I$,
    \begin{align}
        u(t) \in H^2(\R^2).
    \end{align}
    Let $A$ be a countable dense subset of $I.$ For every $t \in A$, we can modify $u(t)$, still denoted by $u(t)$, such that, for every $x_1,x_2 \in \R^2$,
    \begin{align*}
        \abs{u(t,x_1) - u(t,x_2)} \leq \norm{u(t)}_{H^2} \abs{x_1 - x_2}^{1/2} \leq C_2 \abs{x_1 - x_2}^{1/2}
    \end{align*}
    Note that even after the modification in space, the property \cref{eq:holdertimeproof} remains true. Using Agmon's inequality, we obtain that, for every $t_1,t_2 \in A$ and every $x \in \R^2,$
    \begin{align*}
        \abs{u(t_1,x_1) - u(t_2,x_2)} 
        & \leq \abs{u(t_1,x_1) - u(t_2,x_1)} + \abs{u(t_2,x_1) - u(t_2,x_2)}\\
        & \lesssim \norm{u(t_1)-u(t_2)}^{1/2}_2 \norm{\nabla^2 u(t_1)- \nabla^2 u(t_2)}^{1/2}_2 + \norm{u(t_2)}_{H^2} \abs{x_1 - x_2}^{1/2}\\
        & \lesssim \abs{t_1 - t_2}^{1/4} + \abs{x_1 - x_2}^{1/2}
    \end{align*}
    Hence, $u$ is Hölder continuous on on $A \times \R^2$ and, therefore, $u$ is also uniformly continuous. Thus, we can extend $u|_{A \times \R^2}$ to a continuous $\widetilde{u}$ on $[t_1,t_2] \times \R^2$. It is easy to see that $u = \widetilde{u}$ almost everywhere on $[t_1,t_2] \times \R^2$ and by repeating this procedure we see that we can modify $u$ to a continuous function on $[\eps,\infty) \times \R^2.$

    Since $u \in L^4(0,\infty;L^4(\R^2)),$ we have $u \in L^1_\loc(0,\infty;L^4(\R^2)).$ Furthermore, we have, by Ladyzhenskaya's inequality, for every $s>0$,
    \begin{align*}
        \norm{\nabla u(s)}_4 \leq C \norm{\nabla u(s)}_2^{1/2} \norm{\nabla^2 u(s)}_2^{1/2},
    \end{align*}
    which implies
    \begin{align*}
        \int_0^\infty s^{1/2} \norm{\nabla u(s)}_4^2 \dd s
        & \leq C \int_0^\infty \norm{\nabla u(s)}_2 s^{1/2} \norm{\nabla^2 u(s)}_2 \dd s\\
        & \leq C \left(\int_0^\infty \norm{\nabla u(s)}_2^2 \dd s \right)^{1/2} \left(\int_0^\infty s \norm{\nabla^2 u(s)}_2^2 \dd s \right)^{1/2} \leq \norm{u}_E^2.
    \end{align*}
    Hence, for every $t>0,$
    \begin{align*}
        \int_0^t \norm{\nabla u(s)}_4 \dd s &=
        \int_0^t s^{-1/4} s^{1/4} \norm{\nabla u(s)}_4 \dd s\\
        & \leq \left( \int_0^t s^{-1/2} \dd s \right)^{1/2} \left(\int_0^t s^{1/2} \norm{\nabla u(s)}_4^2 \dd s \right)^{1/2}\\
        & = \sqrt{2} t^{1/4}\left(\int_0^t s^{1/2} \norm{\nabla u(s)}_4^2 \dd s \right)^{1/2},
    \end{align*}
    and we deduce that, for every $t>0$, $u \in L^1(0,t;W^{1,4}(\R^2))$ and that, for every $\eps > 0$, $u \in L^2_\loc([\eps,\infty);W^{1,4}(\R^2)).$ It remains to prove that , for every $\eps > 0$, $\nabla^2 u \in L^2_\loc([\eps,\infty);L^{4}(\R^2)).$ By applying Ladyzhenskaya's inequality again, we obtain
    \begin{align*}
        \int_0^\infty t^4 \norm{\nabla^2 u}^4_4 \dd t \leq \int_0^\infty t^4 \norm{\dot{u}}^4_4 \dd t \leq \int_0^\infty t^2 \norm{\dot{u}}^2_2 t^2 \norm{\nabla \dot{u}}^2_2 \dd t \leq \norm{(\rho,u)}^4_Z
    \end{align*}
    In the first step, we made use of the $L^4$-estimate for the Stokes operator and in the second step we used the definition immediately strong solutions.
\end{proof}

We need the following lemma which can be obtained by the same arguments as \cite[Lemma 5.1]{HaoShaoWeiZhang2024}.

\begin{lemma}\label{lem:flowestimatenonlip}
    Let $(\rho,u)$ be an immediately strong solution of \cref{eq:ins}, and let $X \colon (0,\infty)^2 \times \R^2 \to \R^2$ be the flow associated to $u$. Then, for every $s>\tau>0$,
    \begin{align}
        \norm{DX (s,\tau)}_\infty \leq \exp \left(C \norm{(\rho,u)}_Z \abs{\ln(s/\tau)}^{1/2} \right).
    \end{align}
\end{lemma}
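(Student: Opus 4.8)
The idea is to feed the pointwise Lipschitz estimate for the flow from \cref{prop:propflowII}\,\cref{it:LinftyboundflowII} into a Cauchy--Schwarz argument that exploits the extra power of time, $z^{1/2}$, which is available for immediately strong solutions. First, since $(\rho,u)$ is an immediately strong solution, \cref{lem:velocityflow} guarantees that $u$ satisfies \cref{ass:velocityflowtwo}, so \cref{prop:propflowII} applies and, writing $D(s,\tau)\coloneqq DX(s,\tau,\cdot)$, we get for every $s>\tau>0$ (apply \cref{it:LinftyboundflowII} with ``$t$''$=s$ and ``$s$''$=\tau$)
\begin{align*}
    \norm{D(s,\tau)}_\infty \leq \exp\left(\int_\tau^s \norm{\nabla u(z)}_\infty \dd z\right).
\end{align*}

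\textbf{Core estimate.} Set $K_0 \coloneqq \int_0^\infty z\,\norm{\nabla u(z)}_\infty^2 \dd z$, which is finite for immediately strong solutions as observed after \cref{eq:replacelip}. Writing $z = z^{-1/2}\cdot z^{1/2}$ and applying Cauchy--Schwarz on $[\tau,s]\subset(0,\infty)$,
\begin{align*}
    \int_\tau^s \norm{\nabla u(z)}_\infty \dd z \leq \left(\int_\tau^s \frac{\dd z}{z}\right)^{1/2}\left(\int_\tau^s z\,\norm{\nabla u(z)}_\infty^2 \dd z\right)^{1/2} \leq \abs{\ln(s/\tau)}^{1/2}\, K_0^{1/2}.
\end{align*}
It remains to bound $K_0$ by the $Z$-norm. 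Here I would run the same interpolation scheme as in the proof of \cref{lem:velocityflow}: in two dimensions $\norm{\nabla u}_\infty \lesssim \norm{\nabla u}_4^{1/2}\norm{\nabla^2 u}_4^{1/2}$ by Gagliardo--Nirenberg, and combining this with Ladyzhenskaya's inequality and the $L^p$-estimates for the Stokes operator (to trade $\nabla^2 u,\nabla^3 u$ for $\dot u,\nabla\dot u$) one distributes the weight $z$ across the factors and estimates each resulting time integral, via Hölder in time, by products of the quantities $\esssup_z z^k\!\int|\cdot|^2$ and $\int_0^\infty z^k\!\int|\cdot|^2$ that make up $A_0^0, A_1^0, A_2^0$. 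This yields $K_0 \leq \norm{(\rho,u)}_Z^2$ (absorbing a universal constant into the normalization of $\norm{\cdot}_Z$; alternatively $K_0 \leq C(c_0,C_0,\nu)\,\norm{(\rho,u)}_Z^2$, which suffices up to a harmless constant in the statement).

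\textbf{Conclusion and main obstacle.} Since $\int_\tau^s z\,\norm{\nabla u(z)}_\infty^2\dd z \leq K_0 \leq \norm{(\rho,u)}_Z^2$, the core estimate gives $\int_\tau^s \norm{\nabla u(z)}_\infty\dd z \leq \norm{(\rho,u)}_Z\,\abs{\ln(s/\tau)}^{1/2}$, and plugging this into the flow bound from \cref{prop:propflowII} yields $\norm{D(s,\tau)}_\infty \leq \exp\!\left(\norm{(\rho,u)}_Z\,\abs{\ln(s/\tau)}^{1/2}\right)$, as claimed. The only genuinely technical point is the quantitative bound $K_0 \leq \norm{(\rho,u)}_Z^2$: one has to choose the interpolation exponents and powers of $z$ so that all time weights balance exactly against the scaling in \cref{def:Immstrongsol}; everything else is a one-line combination of \cref{prop:propflowII}\,\cref{it:LinftyboundflowII} and Cauchy--Schwarz with the weight $1/z$.
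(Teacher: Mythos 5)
Your proposal is correct and follows essentially the same route as the paper: bound $\int_\tau^s z\,\norm{\nabla u(z)}_\infty^2\dd z$ by $\norm{(\rho,u)}_Z^2$ via the interpolation $\norm{\nabla u}_\infty\lesssim\norm{\nabla u}_4^{1/2}\norm{\nabla^2 u}_4^{1/2}$ together with Ladyzhenskaya and the Stokes $L^p$-estimates, then insert $z^{-1/2}z^{1/2}$ and apply Cauchy--Schwarz inside the exponential bound of \cref{prop:propflowII}\,\cref{it:LinftyboundflowII}. The weight-distribution step you sketch is exactly the computation the paper carries out explicitly (splitting $z=z^{1/4}z^{1/4}z^{1/2}$ and using H\"older with four factors bounded by $A^0_0,A^0_1,A^0_1,A^0_2$), so no gap remains beyond the constants you already flag.
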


\begin{proof}
    Recall that, for every $v \in W^{1,4}(\R^2)$,
    \begin{align}\label{eq:interpolationinfty}
        \norm{v}_\infty \leq C \norm{v}_4^{1/2} \norm{\nabla v}_4^{1/2},
    \end{align}
    By applying \cref{eq:interpolationinfty} to $\nabla u$, we can estimate, for every $s>\tau>0,$
    \begin{align*}
        \int_\tau^s z \norm{\nabla u}_\infty^2 \dd z \leq & \int_\tau^s z \norm{\nabla u}_4 \norm{\nabla^2 u}_4 \dd z\\
        \leq & \int_\tau^s z \norm{\nabla u}_2^{1/2} \norm{\nabla^2 u}_2^{1/2} \norm{\dot{u}}_4 \dd z\\
        \leq & \int_\tau^s z \norm{\nabla u}_2^{1/2} \norm{\nabla^2 u}_2^{1/2} \norm{\dot{u}}_2^{1/2} \norm{\nabla \dot{u}}_2^{1/2} \dd z\\
        \leq & \int_\tau^s  \norm{\nabla u}_2^{1/2} z^{1/4}\norm{\nabla^2 u}_2^{1/2} z^{1/4} \norm{\dot{u}}_2 z^{1/2} \norm{\nabla \dot{u}}_2 \dd z\\
        \leq & \left( \int_\tau^s \norm{\nabla u}_2^2 \dd z \right)^{1/4}
        \left( \int_\tau^s z \norm{\nabla^2 u}_2^2 \dd z \right)^{1/4}\\
        & \left( \int_\tau^s z \norm{\dot{u}}_2^2 \dd z \right)^{1/4}
        \left( \int_\tau^s z^2 \norm{\nabla \dot{u}}_2^2 \dd z \right)^{1/4}\\
        \leq & \norm{(\rho,u)}^2_Z.
    \end{align*}
    Using \cref{prop:propflowII} \cref{it:LinftyboundflowII}, we have, for every $\tau \in (0,s),$
    \begin{align*}
        \norm{DX(s,\tau)}_\infty &\leq \exp \left(\int_\tau^s z^{-1/2} z^{1/2} \norm{\nabla u}_\infty \dd z \right)\\
        & \leq \exp \left( \left(\int_\tau^s z^{-1} \dd z \right)^{1/2} 
        \left( \int_\tau^s z \norm{\nabla u}_\infty^2 \dd z \right)^{1/2}\right)\\
        & \leq \exp \left( \abs{\ln(s/\tau)}^{1/2} \norm{(\rho,u)}_Z \right)
    \end{align*}
    In the last step, we used that, for every $s > \tau > 0$,
    \begin{align*}
        \int^s_\tau z^{-1} \dd z = \ln(s) - \ln(\tau) = \ln (s/\tau) = \abs{\ln (s/\tau)},
    \end{align*}
    and the previously shown estimate.
\end{proof}

\subsection{$W^{-1,4}$-estimates}

In the following, we assume that $(\rho_1,u_1)$ is a Leray--Hopf and $(\rho_2,u_2)$ is an immediately strong solution with respect to the initial data
\begin{equation}\label{ass:datatwo}
\begin{aligned} 
    0 < c_0 \leq \rho_0(x) \leq C_0 < \infty \quad \text{for a.e.~$x \in \R^2$}, \quad u_0^1,u^2_0 \in  L^2_\sigma(\R^2).
\end{aligned}
\end{equation}
In the following, we write
\begin{align*}
    \delta \rho = \rho_1 - \rho_2, \quad \delta u = u_1 - u_2.
\end{align*}

\begin{proposition}\label{prop:wminus4stab1}
    Let $(\rho_i,u_i)$, $i=1,2$, be as above. Then we have, for every $\phi \in W^{1,4/3}(\R^2)$ and every $s>0,$
    \begin{align}\label{eq:Wminus14estiphi}
    \begin{aligned}
        \abs{\skal{\delta \rho(s,\cdot),\phi}} \leq \sup_{\tau \in (0,s)} \norm{\sqrt{\rho_1}\delta u}^{\frac{1}{2}}_2 \norm{\nabla \phi}_{\frac{4}{3}} \int_0^s \exp\left( \norm{(\rho,u)}_Z \abs{\ln(s/\tau)}^{1/2}\right) \norm{\nabla \delta u}_2^{\frac{1}{2}}\dd \tau.
    \end{aligned}
    \end{align}
\end{proposition}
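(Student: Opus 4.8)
The plan is to use that $\delta\rho$ is transported by the velocity $u_2$ of the immediately strong solution, to pull everything back along its (non‑Lipschitz) flow, and then to estimate with Ladyzhenskaya's inequality together with the flow bound of \cref{lem:flowestimatenonlip}. Subtracting the two continuity equations and using $\div u_1=\div u_2=0$ yields, in the sense of distributions,
\[
\partial_t\delta\rho+\div(\delta\rho\,u_2)=-\div(\rho_1\delta u),\qquad \delta\rho(0)=0.
\]
Observe that the source $-\div(\rho_1\delta u)=-\nabla\rho_1\cdot\delta u$ is only a distribution — a full spatial derivative of $\rho_1\delta u\in L^\infty_t L^4_x$ — so every identity below is to be understood after testing against $C^\infty_c$ functions. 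By \cref{lem:velocityflow} the field $u_2$ meets the hypotheses of \cref{prop:propflowII}, so its flow $X$ exists, is volume preserving ($J\equiv1$), and obeys $\norm{DX(s,\tau,\cdot)}_\infty\le\exp(\norm{(\rho_2,u_2)}_Z\abs{\ln(s/\tau)}^{1/2})$ by \cref{lem:flowestimatenonlip}; here $\norm{(\rho_2,u_2)}_Z$ is the quantity denoted $\norm{(\rho,u)}_Z$ in the statement.

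I would first fix $\phi\in C^\infty_c(\R^2)$, the general case $\phi\in W^{1,4/3}(\R^2)$ following by density since the asserted bound involves only $\norm{\nabla\phi}_{4/3}$. Starting from the representation \eqref{eq:formulatwoflow} (valid for $s>\eps>0$), I would test it against $\phi$, perform the measure‑preserving change of variables $y=X(\cdot,s,x)$, whose inverse is $X(s,\cdot,y)$, and integrate by parts to move the divergence in the source onto $\phi\circ X(s,z,\cdot)$, obtaining
\[
\skal{\delta\rho(s),\phi}=\skal{\delta\rho(\eps),\,\phi\circ X(s,\eps,\cdot)}+\int_\eps^s\!\!\int_{\R^2}(\rho_1\delta u)(z,y)\cdot[DX(s,z,y)]^{T}(\nabla\phi)(X(s,z,y))\,\dd y\,\dd z.
\]

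The heart of the argument is to let $\eps\to0$ and to see that the boundary term vanishes. Since $\rho_1,\rho_2\in C_{w^*}([0,\infty);L^\infty)$ share the initial value $\rho_0$, we have $\delta\rho(\eps)\wstarlim0$ in $L^\infty(\R^2)$, with $\norm{\delta\rho(\eps)}_\infty$ bounded uniformly (\cref{rmk:novacuum}). Using \eqref{eq:Linftyregularity} we get $\norm{u_2(z)}_\infty\lesssim\norm{(\rho_2,u_2)}_Z z^{-1/2}$, hence $\int_0^s\norm{u_2(z)}_\infty\,\dd z\lesssim\norm{(\rho_2,u_2)}_Z s^{1/2}<\infty$, so the functions $\phi\circ X(s,\eps,\cdot)$ are all supported in one fixed ball. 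A Grönwall comparison of $X(s,\eps,\cdot)$ and $X(s,\eps',\cdot)$ — whose discrepancy at the later starting time is $\lesssim\norm{(\rho_2,u_2)}_Z(\eps^{1/2}-(\eps')^{1/2})$ and whose Grönwall factor $\exp(\int_\eps^s\norm{\nabla u_2}_\infty)$ is bounded by $\exp(\norm{(\rho_2,u_2)}_Z\abs{\ln(s/\eps)}^{1/2})$ (as in the proof of \cref{lem:flowestimatenonlip}) — shows that $X(s,\eps,\cdot)$ converges uniformly as $\eps\to0$, because $\eps^{1/2}\exp(c\abs{\ln(s/\eps)}^{1/2})\to0$. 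Consequently $\phi\circ X(s,\eps,\cdot)$ converges in $L^1$, and together with $\delta\rho(\eps)\wstarlim0$ this forces the boundary term to $0$; the same bounds make the $z$‑integral converge, leaving
\[
\skal{\delta\rho(s),\phi}=\int_0^s\!\!\int_{\R^2}(\rho_1\delta u)(\tau,y)\cdot[DX(s,\tau,y)]^{T}(\nabla\phi)(X(s,\tau,y))\,\dd y\,\dd\tau.
\]

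From here the estimate is routine: Hölder's inequality in $y$ with exponents $(4,4/3)$, the measure‑preserving change of variables $y\mapsto X(s,\tau,y)$ and $\norm{[DX]^{T}}_\infty\le\norm{DX}_\infty$ give $\abs{\skal{\delta\rho(s),\phi}}\le\norm{\nabla\phi}_{4/3}\int_0^s\norm{(\rho_1\delta u)(\tau)}_4\,\norm{DX(s,\tau,\cdot)}_\infty\,\dd\tau$; Ladyzhenskaya's inequality with $c_0\le\rho_1\le C_0$ (\cref{rmk:novacuum}) gives $\norm{(\rho_1\delta u)(\tau)}_4\lesssim\norm{\sqrt{\rho_1(\tau)}\delta u(\tau)}_2^{1/2}\norm{\nabla\delta u(\tau)}_2^{1/2}$; inserting the bound from \cref{lem:flowestimatenonlip} and pulling $\sup_{\tau\in(0,s)}\norm{\sqrt{\rho_1}\delta u}_2^{1/2}$ out of the integral yields \eqref{eq:Wminus14estiphi} (up to the constant depending on $c_0,C_0$ and Ladyzhenskaya's constant, which I would keep track of or absorb). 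The main obstacle is therefore the third step: because $u_2$ is not Lipschitz as $t\downarrow0$ — one only has $\int_0^s z\norm{\nabla u_2(z)}_\infty^2\,\dd z<\infty$, so $\int_\eps^s\norm{\nabla u_2}_\infty\sim\abs{\ln(s/\eps)}^{1/2}$ diverges — one must verify that the $\eps^{1/2}$ smallness of the flow discrepancy beats the $\exp(c\abs{\ln(s/\eps)}^{1/2})$ Grönwall growth; and since the source is genuinely distributional, the change of variables and integration by parts of the second step have to be carried out at the level of dual pairings (for instance after mollifying $\rho_1$ and invoking a DiPerna–Lions commutator to justify the transport identity \eqref{eq:formulatwoflow}).
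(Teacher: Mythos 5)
Your proposal is correct and follows the same skeleton as the paper's proof: represent $\delta\rho$ along the flow of $u_2$ starting from a positive time $\eps$, pair with $\phi\in C^\infty_c$, change variables and integrate by parts to land the derivative on $\phi\circ X(s,\tau,\cdot)$, bound $\norm{DX(s,\tau)}_\infty$ by \cref{lem:flowestimatenonlip}, use Ladyzhenskaya together with $c_0\le\rho_1\le C_0$ to convert $\norm{\rho_1\delta u}_4$ into $\norm{\sqrt{\rho_1}\delta u}_2^{1/2}\norm{\nabla\delta u}_2^{1/2}$, and finally kill the boundary term at time $\eps$ by combining $\delta\rho(\eps)\wstarlim 0$ in $L^\infty$ with strong $L^1$-convergence of $\phi\circ X(s,\eps,\cdot)$. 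The justification of the representation formula for the genuinely distributional source, which you only sketch (``mollify and use a DiPerna--Lions commutator''), is exactly what the paper carries out, mollifying $\delta\rho$ and the source and controlling the remainder $R_\gamma$ following \cite[Prop.~4.2]{CrinBaratSkondricViolini2024}, so this is a deferral rather than a gap.

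The one place where you genuinely diverge is the treatment of the boundary term. The paper identifies $\phi(X(s,\cdot,\cdot))$ with the renormalized solution of the backward transport equation \eqref{eq:transportbackward} and invokes \cref{lem:transportcontiL1} (uniform compact support plus $L^2$-continuity from \cref{lem:contieqcontiL2}) to get $\phi(X(s,\cdot))\in C([0,s];L^1(\R^2))$. You instead argue quantitatively on the flow itself: by the semigroup property and \eqref{eq:Linftyregularity}, the discrepancy between $X(s,\eps,\cdot)$ and $X(s,\eps',\cdot)$ is at most of order $\eps^{1/2}$ times the Gr\"onwall factor $\exp\left(\norm{(\rho_2,u_2)}_Z\abs{\ln(s/\eps)}^{1/2}\right)$, and since $\eps^{1/2}\exp\left(c\abs{\ln(s/\eps)}^{1/2}\right)\to 0$, the maps $X(s,\eps,\cdot)$ are uniformly Cauchy as $\eps\to 0$; together with the fixed compact support (from $\int_0^s\norm{u_2(z)}_\infty\dd z\lesssim s^{1/2}$) this gives the needed $L^1$-convergence of $\phi\circ X(s,\eps,\cdot)$. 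I checked this computation and it is correct; it is more elementary and self-contained than the paper's route, bypassing \cref{lem:transportcontiL1} and \cref{lem:contieqcontiL2} entirely, at the cost of using the dispersive bound \eqref{eq:Linftyregularity} in an essential way, whereas the paper's argument only needs $u_2\in L^1(0,s;W^{1,4})$. The remaining deviations (mollifying $\rho_1$ rather than $\delta\rho$, and keeping track of the harmless constant $C(c_0,C_0)$ from Ladyzhenskaya that the paper's displayed inequality silently absorbs) are immaterial.
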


\begin{proof}
    Let $X \colon (0,\infty)^2 \times \R^2 \to \R^2$ be the flow associated to $u_2$. Furthermore, for every $\gamma > 0$, we write
    \begin{align*}
        \delta \rho_\gamma \coloneqq (\rho_1 - \rho_2) * \varphi_\gamma,
        \quad \textrm{ and } \quad 
        R_\gamma \coloneqq \delta \rho_\gamma u_2 - (\delta \rho u_2)* \varphi_\gamma.
    \end{align*}
    Here, $(\varphi_\gamma)_{\gamma > 0}$ denotes the standard mollification sequence. Since
    \begin{spreadlines}{2ex}
    \begin{equation}\label{eq:transporteqrhogamma}
    \begin{dcases}
    \begin{aligned}
    \partial_s \delta \rho_\gamma + \div(\delta \rho_\gamma u_2) &= - \div(\rho_1 \delta u)_\gamma + \dive(R_\gamma), \qquad  & t>0 \\
    \rho_\gamma|_\eps &= \rho_\gamma(\eps),
    \end{aligned}
    \end{dcases}
    \end{equation}
    \end{spreadlines}
    we can write, due to \cref{prop:propflowII} \cref{it:solucontieq}, for every $s,\eps > 0$,
    \begin{align}\label{eq:formularhogamma}
        \delta \rho_\gamma(s,x) = \delta \rho_\gamma(\eps,X(\eps,s,x)) - \int_\eps^s \div(\rho_1 \delta u)_\gamma(\tau,X(\tau,s,x)) - \dive(R_\gamma)(\tau,X(\tau,s,x))\dd \tau.
    \end{align}
    It suffices to proof \cref{eq:Wminus14estiphi} for every $\phi \in C^\infty_c(\R^2).$ To this end, we multiply \cref{eq:formularhogamma} with $\phi$ and get, for every $s>0$,
    \begin{align*}
        \skal{\delta \rho_\gamma(s),\phi} = \skal{\delta \rho_\gamma(\eps,X(\eps,s)),\phi} - \int_\eps^s \skal{\div(\rho_1 \delta u)_\gamma(\tau,X(\tau,s)) - \dive(R_\gamma)(\tau,X(\tau,s)),\phi} \dd \tau
    \end{align*}
    Then we have, by using a change of coordinates and integration by parts, for every $s>0,$
    \begin{align*}
        \skal{\delta \rho_\gamma(s),\phi(x)} 
        =& \skal{\delta \rho_\gamma(\eps),\phi(X(s,\eps))} - \int_\eps^s \skal{\div(\rho_1 \delta u)_\gamma(\tau,\cdot),\phi(X(s,\tau))} \dd \tau\\
        & + \int_\eps^s \skal{\dive(R_\gamma)(\tau),\phi(X(s,\tau))} \dd \tau\\
        =& \skal{\delta \rho_\gamma(\eps),\phi(X(s,\eps))} + \int_\eps^s \skal{(\rho_1 \delta u)_\gamma(\tau), D X(s,\tau) \nabla \phi(X(s,\tau))} \dd \tau\\
        & + \int_\eps^s \skal{\dive(R_\gamma)(\tau),\phi(X(s,\tau))} \dd \tau\\
    \end{align*}
    We have, for every $\tau \in (\eps,t)$, by \cref{lem:velocityflow},
    \begin{align*}
        \norm{D X(s,\tau)}_\infty \leq \exp \left( \norm{(\rho,u)}_E \abs{\ln(s/\tau)}^{1/2} \right).
    \end{align*}
    Hence, following the arguments in the proof of \cite[Prop. 4.2]{CrinBaratSkondricViolini2024}, we obtain, by passing to the limit $\gamma \to 0$, for every $s>0$,
    \begin{align*}
        \abs{\skal{\delta \rho(s),\phi}} \leq & \abs{\skal{\delta \rho(\eps),\phi(X(s,\eps))}}+ \int_\eps^s \exp \left( \norm{(\rho,u)}_E \abs{\ln(s/\tau)}^{1/2} \right) \norm{\rho_1 \delta u}_4 \norm{\nabla \phi}_{4/3} \dd \tau \\
        \leq & \abs{\skal{\delta \rho(\eps),\phi(X(s,\eps))}}+ \sup_{\tau \in (0,s)} \norm{\sqrt{\rho_1}\delta u}_2^{1/2} s \norm{\nabla \phi}_{4/3}\\ 
        & \frac{1}{s} \int_0^s \exp \left( \norm{(\rho,u)}_E \abs{\ln(s/\tau)}^{1/2} \right) \norm{\nabla \delta u}_2^{1/2} \dd \tau.
    \end{align*}
    Note that we that
    \begin{align*}
        \int_\eps^s \skal{\dive(R_\gamma)(\tau),\phi(X(s,\tau))} \dd \tau \underset{\gamma \to 0}{\longrightarrow} 0.
    \end{align*}
    This follows from the commutator estimates in \cite[Lemma II.1 ii)]{DiPernaLions1989}.
    
    It only remains to argue why 
    \begin{align}\label{eq:remaindereps}
        \skal{\delta \rho(\eps),\phi(X(s,\eps))}\underset{\eps \to 0}{\longrightarrow}0.
    \end{align}
    By \cref{prop:propflowII} \cref{it:solucontieq}, we have that $\widetilde{\phi} \coloneqq \phi(X(s,\cdot,\cdot))$ is the unique renormalized solution of the following backward transport equation
    \begin{spreadlines}{2ex}
    \begin{equation}\label{eq:transportbackward}
    \begin{dcases}
    \begin{aligned}
    \partial_\tau \widetilde{\phi} + u_2 \cdot \nabla \widetilde{\phi} & =0, \qquad  &\tau < s \\
    \widetilde{\phi}(s) &= \phi_s.
    \end{aligned}
    \end{dcases}
    \end{equation}
    \end{spreadlines}
    
    Since $\phi_s \in C^\infty_c(\R^2),$ we have, by \cref{lem:transportcontiL1}, $\phi(X(s,\cdot)) \in C([0,s];L^1(\R^2))$. Thus there is a $\phi_0$ such that $\lim_{\eps \to 0} \phi(X(s,\eps)) = \phi_0$ in $L^1(\R^2)$ as $\eps \to 0$. Using that $\delta \rho(\eps) \wstarlim 0$ in $L^\infty(\R^2)$ as $\eps \to 0,$ we infer that
    \begin{align*}
        \skal{\delta \rho(\eps),\phi_s(X(s,\eps))}\underset{\eps \to 0}{\longrightarrow}0,
    \end{align*}
    which completes the proof.
\end{proof}
    
\section{Proof of \cref{thm:mainthmrefined}}\label{sec:section4}

In this section, we prove \cref{thm:mainthmrefined}. As mentioned above, we use the relative energy method to show stability in $E$. So, given $(\rho_0,u_0)$ as in \cref{ass:data1}, we write , for every $n \in \N$, 
\begin{align*}
    u^n_0 \coloneqq \varphi_n * u_0,
\end{align*}
and let, for every $n \in \N$, $(\rho_n,u_n)$ be the solution of \cref{ass:data1} from \cref{prop:PZZ}. We start with the proof of \cref{prop:smoothcauchy}.

\begin{proof}[Proof of \cref{prop:smoothcauchy}]
    Let $(\rho_0,u_0),(\rho_0,u^n_0)$ and $(\rho_n,u_n)$ be as above. Due to the additional regularity of $(\rho_n,u_n)$, we have by \cite[Lemma 4.1]{CrinBaratSkondricViolini2024}, for every $n \in \N$ and every $t > 0$,
    \begin{align}\label{eq:energydiffnm}
        \begin{aligned}
        \frac{1}{2} & \norm{\sqrt{\rho_m(t)} \delta^m_n u(t)}_2^2 + \nu \int_0^t \norm{\nabla \delta^m_n u}^2_2 \dd s \\ &= -\int_0^t \int_{\R^2} \delta^m_n \rho 
        \dot{u}_n \cdot \delta^m_n u +\rho_m \delta^m_n u \otimes \delta^m_n u : \nabla u_n + \frac{1}{2}\norm{\sqrt{\rho_0} \delta^m_n u(0)}_2^2\\
        & \eqqcolon I^1_{mn}(t) + I^2_{mn}(t)+ \frac{1}{2}\norm{\sqrt{\rho_0} \delta^m_n u(0)}_2^2.
        \end{aligned}
    \end{align}
    Here we used the notation
    \begin{align*}
        \delta^m_n \rho \coloneqq \rho_m-\rho_n, \quad \delta^m_n u \coloneqq u_m-u_n.
    \end{align*}
    Note that \cref{eq:energydiff} remains true for almost every time $t>0$ if we replace $(\rho_m, u_m)$ by any Leray-Hopf solution.
    
    We start by estimating $I^2_{mn}.$ With Hölder's and Ladyzhenskaya's inequality, we infer, for every $t>0$,
    \begin{align*}
        \abs{I^2_{mn}(t)} \leq & C_0 \int_0^t \norm{\nabla u_n}_2 \norm{\delta^m_n u(s)}^2_4 \dd s\\
        \leq & C_0 \int_0^t \norm{\nabla u_n}_2 \norm{\delta^m_n u(s)}_2 \norm{\nabla \delta^m_n u(s)}_2 \dd s\\
        \leq & C_0 C \int_0^t \norm{\nabla u_n}_2^2 \left( \sup_{\tau \leq s} \norm{\delta^m_n u(s)}_2^2 + \int_0^s \norm{\nabla \delta^m_n u}^2_2 \dd \tau \right)\dd s + 
        \frac{\nu}{4}\int_0^t \norm{\nabla \delta^m_n u(s)}^2_2 \dd s.
    \end{align*}
    The constant $C$ in the last line only depends on $\nu.$ Note that in the first step we used that, by \cref{rmk:novacuum}, for every $t>0$ and every $m \in \N$,
    \begin{align}\label{eq:unifbdrhom}
        c_0 \leq \rho_m(t,x) \leq C_0  \quad \text{for~a.e.~$x \in \R^2$}.
    \end{align}
    Next we estimate $I^1_{mn}.$ Using \cref{prop:wminus4stab1} with $\phi = \dot{u}_n(s) \delta^m_n u(s)$, we get, for every $t>0$,
    \begin{align}\label{eq:estimateI1mn1}
    \begin{aligned}
        \abs{I^1_{mn}(t)} \leq & \int_0^t \sup_{\tau \in (0,s)} \norm{\sqrt{\rho_m}\delta^m_n u}^{1/2}_2 \cdot s \norm{\nabla (\dot{u}_n(s) \cdot \delta^m_n u(s))}_{4/3}\\ 
        & \times \frac{1}{s} \int_0^s \exp\left(\norm{(\norm{(\rho_n,u_n)}_Z)} \abs{\ln(s/\tau)}^{1/2}\right) \norm{\nabla \delta^m_n u}_2^{1/2}\dd \tau.
    \end{aligned}
    \end{align}
    Since, for every $n \in \N,$ $\norm{u^0_n}_2 = \norm{\varphi_n * u^0}_2 \leq \norm{u^0}_2,$ we have, by \cref{prop:PZZ}, for every $n \in \N,$
    \begin{align*}
        \norm{(\rho_n,u_n)}_Z \leq f(\norm{u^n_0}_2) \leq f(\norm{u_0}_2).
    \end{align*}
    Thus, we can estimate $I^1_{m,n}$ uniformly, for every $t>0$ and every $m,n \in \N$, as
    \begin{align}\label{eq:estimateI1mn2}
    \begin{aligned}
        \abs{I^1_{mn}(t)} \leq & C_0 \int_0^t \sup_{\tau \in (0,s)} \norm{\delta^m_n u}^{1/2}_2 \cdot s \norm{\nabla (\dot{u}_n(s) \cdot \delta^m_n u(s))}_{4/3}\\ 
        & \times \frac{1}{s} \int_0^s \exp\left(f(\norm{u_0}_2) \abs{\ln(s/\tau)}^{1/2}\right) \norm{\nabla \delta^m_n u}_2^{1/2}\dd \tau.
    \end{aligned}
    \end{align}
    Note that we used the estimate \cref{eq:unifbdrhom} to drop the factor $\sqrt{\rho_m}$.
    
    Using the product rule for Sobolev functions, Hölder's inequality and Ladyzhenskaya's inequality several times, we get, for every $t>0$,  
    \begin{align*}
        \norm{\nabla (\dot{u}_n(s) \cdot \delta^m_n u(s))}_{4/3} 
        \leq &  \norm{\nabla \dot{u}_n(s)}_2\norm{\delta^m_n u(s)}_4 +\norm{\dot{u}_n(s)}_4\norm{\nabla \delta^m_n u(s)}_2\\
        \lesssim & \norm{\nabla \dot{u}_n(s)}_2\norm{\delta^m_n u(s)}_2^{1/2} \norm{\nabla \delta^m_n u(s)}_2^{1/2}\\
        & + \norm{\dot{u}_n(s)}_2^{1/2}  \norm{\nabla \dot{u}_n(s)}_2^{1/2} \norm{\nabla \delta^m_n u(s)}_2\\
        \lesssim  & \norm{\nabla \dot{u}_n(s)}_2 \sup_{\tau \leq s} \norm{\delta^m_n u(\tau)}^{1/2}_2 \norm{\nabla \delta^m_n u(s)}_2^{1/2}\\
        & + \norm{\dot{u}_n(s)}_2^{1/2}  \norm{\nabla \dot{u}_n(s)}_2^{1/2} \norm{\nabla \delta^m_n u(s)}_2.
    \end{align*}
    Collecting the previous estimates in this step, and using Young's inequality with exponents $(4,4,2)$, we conclude, for every $t>0$ and every $m,n \in \N$,
    \begin{align*}
        \abs{I^1_{mn}(t)} \leq & \int_0^t s \norm{\nabla \dot{u}_n(s)}_2 
        \sup_{\tau \leq s} \norm{\delta^m_n u(\tau)}_2  \norm{\nabla \delta^m_n u(s)}_2^{1/2}\\
        & \times \frac{1}{s}\int_0^s \exp\left(f(\norm{u_0}_2)\abs{\ln(s/\tau)}^{1/2}\right)\norm{\nabla \delta^m_n u(\tau)}_2^{1/2} \dd \tau \dd s \\
        & + \int_0^t s \norm{\dot{u}_n(s)}_2^{1/2}  
        \norm{\nabla \dot{u}_n(s)}_2^{1/2}
        \sup_{\tau \leq s} \norm{\delta^m_n u(\tau)}_2^{1/2} 
        \norm{\nabla \delta^m_n u(s)}_2\\
        & \times \frac{1}{s}\int_0^s \exp\left(f(\norm{u_0}_2) \abs{\ln(s/\tau)}^{1/2}\right) \norm{\nabla \delta^m_n u(\tau)}_2^{1/2} \dd \tau \dd s \\
        \leq & C \int_0^t \left(s^2\norm{\nabla \dot{u}_n(s)}_2^2 
        + s^4\norm{\dot{u}_n(s)}_2^2 \norm{\nabla \dot{u}_n(s)}_2^2 \right)
        \sup_{\tau \leq s} \norm{\delta^m_n u(\tau)}_2^2 \dd s\\
        & + \frac{\nu}{8L} \int_0^t \left( \frac{1}{s}\int_0^s \exp\left(f(\norm{u_0}_2) \abs{\ln(s/\tau)}^{1/2}\right)
        \norm{\nabla \delta^m_n u(\tau)}_2^{1/2} \dd \tau \right)^4 \dd s\\ &+ 
        \frac{\nu}{8} \int_0^t \norm{\nabla \delta^m_n u(s)}^2_2 
        \dd s.
    \end{align*}
    The constant $L>0$ will be chosen later, and the constant $C>0$ only depends on $L>0.$ By \cref{prop:PZZ}, we have, for every $n \in \N$ and every $s>0$,
    \begin{align*}
        s^2 \norm{\dot{u}_n(s)}_2^2 \leq \norm{(\rho_n,u_n)}_Z^2 \leq f(\norm{u^n_0}_2) \leq f(\norm{u_0}_2).
    \end{align*}
    According to \cref{lem:Lpinequality}, we can pick the constant $L>0$ such that, for every $t>0$ and every $m,n \in \N$,
    \begin{align*}
        \int_0^t \left( \frac{1}{s}\int_0^s \exp\left(f(\norm{u_0}_2) \abs{\ln(s/\tau)}^{1/2}\right) \norm{\nabla \delta^m_n u(\tau)}_2^{1/2} \dd \tau\right)^4 \dd s \leq L \int_0^t \norm{\nabla \delta^m_n u(s)}^2_2 \dd s.
    \end{align*}
    The constant $L$ only depends on $f(\norm{u_0}_2)$, but neither on $t>0$ nor $m,n \in \N.$ Thus, the constant $C>0$ appearing above is also independent of $t,m,n.$ 
    
    We conclude that there is a $C>0$ such that, for every $t>0$ and every $m,n \in \N,$
    \begin{align*}
        \abs{I^1_{mn}(t)} \leq & C \int_0^t s^2\norm{\nabla \dot{u}_n(s)}_2^2 \sup_{\tau \leq s} \norm{\delta^m_n u(\tau)}_2^2 \dd s + \frac{\nu}{4} \int_0^t \norm{\nabla \delta^m_n u(s)}^2_2 \dd s\\
        \leq & C \int_0^t s^2\norm{\nabla \dot{u}_n(s)}_2^2 \left(\sup_{\tau \leq s} \norm{\delta^m_n u(\tau)}_2^2 + \int^s_0 \norm{\nabla \delta^m_n u} \dd s\right) \dd s+ \frac{\nu}{4} \int_0^t \norm{\nabla \delta^m_n u(s)}^2_2 \dd s
    \end{align*}
    We define, for every $n,m \in \N$ and every $t>0$,
    \begin{align*}
        f^m_n(s) \coloneqq \sup_{\tau \leq s} \frac{1}{2} \norm{\delta^m_n u(\tau)}_2^2 + \int_0^s \norm{\nabla \delta^m_n u}^2_2 \dd \tau,
    \end{align*}
    and by the previous consideration, we have, for every $t>0$ and every $m,n \in \N$,
    \begin{align*}
        f^m_n(t) \leq C\norm{\delta^m_n u(0)}_2^2 + C \int_0^t \left(\norm{\nabla u_n(s)}^2_2 + s^2\norm{\nabla \dot{u}_n(s)}_2^2 \right) f^m_n(s) \dd s.
    \end{align*}
    The constant $C$ only depends on $f(\norm{u_0}_2).$ Since $f^m_n \in L^\infty((0,\infty))$ and 
    \begin{align*}
        g_n(s) \coloneqq \norm{\nabla u_n(s)}^2_2 + s^2\norm{\nabla \dot{u}_n(s)}_2^2 \in L^1((0,\infty)),
    \end{align*}
    more precisely, by \cref{prop:PZZ}, for every $n \in \N,$
    \begin{align*}
        \int_0^\infty g_n(s) \dd s \leq \norm{(\rho_n,u_n)}_Z^2 \leq f(\norm{u_0}_2)^2,
    \end{align*}
    we infer, by employing Grönwall's inequality, for every $t>0$ and every $m,n \in \N$,
    \begin{align*}
        f^m_n(t) \leq C f^m_n(0) \exp\left( C \int_0^t g_n(s) \dd s \right) \leq C \exp\left( C f(\norm{u_0}_2)^2 \right) \norm{u_n(0)-u_m(0)}^2_2.
    \end{align*}
    We conclude that, for every $m,n \in \N$,
    \begin{align}\label{eq:stabcauchyproof}
        \frac{1}{2} \sup_{s \in (0,\infty)} \norm{\delta^m_n u(s)}_2^2 + \nu \int_0^\infty \norm{\nabla \delta^m_n u(s)}^2_2 \leq C \norm{u_n(0)-u_m(0)}^2_2.
    \end{align}
    Note that \cref{eq:stabcauchyproof} remains true if we replace $(\rho_m,u_m)$ by any Leray--Hopf solution $(\rho,u)$ as we made no use of the additional regularity of $(\rho_m,u_m)$. Thus, the sequence $(u_n)_{n \in \N}$ is a Cauchy sequence in the space $E$ and, since $E$ is a Banach space, there is a $u \in E$ such that $u_n \to u$ in $E$ as $n \to \infty.$ 

    Let us now argue why there is a $\rho \in C_{w^*}([0,\infty);L^\infty(\R^2))$ such that 
    \begin{align}\label{eq:convwstarrho}
        \rho_n(t) \underset{n \to \infty}{\wstarlim} \rho(t) \quad \textrm{ in } \quad L^\infty(\R^2). 
    \end{align}
    Since, for every $t>0$, $(\rho_n(t))_{n \in \N}$ is bounded in $L^\infty(\R^2)$ by \cref{eq:unifbdrhom}, by the Banach-Alaoglu Theorem, there is a subsequence $(n_k)_{k \in \N} = (n_k(t))_{k \in \N}$ such that
    \begin{align*}
        \rho_{n_k}(t) \underset{n \to \infty}{\wstarlim} \rho(t) \quad \textrm{ in } \quad L^\infty(\R^2). 
    \end{align*}
    In the next step, we show that, for every $\phi \in C^\infty_c(\R^2)$, $(\skal{\rho_n(t),\phi})_{n \in \N}$ is a Cauchy sequence. From this, due to the density of $C^\infty_c(\R^2)$ in $L^1(\R^2)$, \cref{eq:convwstarrho} follows immediately.
    
    Let $\phi \in C^\infty_c(\R^2)$. By \cref{eq:Wminus14estiphi}, we have, for every $t>0$ and every $n,m \in \N$, 
    \begin{align*}
        \abs{\skal{\rho_n(t)-\rho_m(t),\phi}} \leq & \sup_{s \in (0,t)}
        \norm{u_n(s)-u(s)_m}^{1/2}_2 \norm{\nabla \phi}_{4/3}\\
        & \int_0^t \exp \left( \norm{(\rho_n,u_n)}_Z \abs{\ln(s/t)}^{1/2} \right)\norm{\nabla (u_n-u_m)}_2^{1/2} \dd s\\
        \leq & \sup_{s \in (0,t)}
        \norm{u_n(s)-u(s)_m}^{1/2}_2 \norm{\nabla \phi}_{4/3}\\
        & \left( \int_0^t \exp \left( 2 f(\norm{u_0}_Z) \abs{\ln(s/t)}^{1/2} \right) \right)^{1/2} \left( \int_0^t \norm{\nabla(u_n-u_m)}_2 \right)^{1/2}.
    \end{align*}
    Since $(u_n)_{n \in \N}$ is a Cauchy sequence in $E,$ we deduce, for every $t>0$,
    \begin{align*}
        \abs{\skal{\rho_n(t)-\rho_m(t),\phi}} \underset{n,m \to \infty}{\to} 0.
    \end{align*}
    The same argument shows also that, for every $T>0$,
    \begin{align*}
        \rho_n \wstarlim \rho \quad \text{in} \quad L^\infty((0,T) \times \R^2).
    \end{align*}
    Note that, for every $n \in \N$, every $t>0$ and every $\phi \in C^\infty_c(\R^2)$,
    \begin{align*}
        \int_{\R^2} \rho_n(t) \phi \dd x - \int_{\R^2} \rho_0 \phi \dd x = \int_0^t \int_{\R^2} \rho_n u_n \cdot \nabla \phi \dd x \dd s.
    \end{align*}
    Since, for every $t > 0$,
    \begin{align*}
        \rho_n(t) \underset{n \to \infty}{\wstarlim} \rho(t),\quad \rho_n \underset{n \to \infty}{\wstarlim} \rho \quad \text{in} \quad L^\infty((0,t) \times \R^2),\quad u_n \underset{n \to \infty}{\to} u \quad \text{in} \quad L^2((0,t) \times \R^2),
    \end{align*}
    we can, due to \cref{lem:weaktimesstrong} \cref{it:weaktimesstrong} and \cref{it:weaksqstimesstrong}, pass to the limit $n \to \infty$
    and deduce that, for every $t>0$ and every $\phi \in C^\infty_c(\R^2)$,
    \begin{align}\label{eq:weakformconvmassproof}
        \int_{\R^2} \rho(t) \phi \dd x - \int_{\R^2} \rho_0 \phi \dd x = \int_0^t \int_{\R^2} \rho u \cdot \nabla \phi  \dd x \dd s.
    \end{align}
    From \cref{eq:weakformconvmassproof} it follows immediately that $\rho$ is a weak solution of the conservation of mass equation and that 
    \begin{align*}
        \rho \in C_{w^*}([0,\infty);L^\infty(\R^2)).
    \end{align*}
    Since $(\rho_n,u_n)$ satisfies the energy equality for every $n \in \N$ by \cite[Lemma 2.9]{CrinBaratDeNittiSkondricViolini2024} and since, by \cref{lem:weaktimesstrong} \cref{it:weaktimesstrong} and by what was done before, for every $t>0$, 
    \begin{align*}
        \rho_n(t) u_n(t) \wlim \rho(t) u(t),\quad u_n(t) \to u(t), \quad \nabla u_n \to \nabla u \quad \text{ in } \quad L^2((0,t) \times L^2(\R^2)),
    \end{align*}
    we obtain that, for every $t>0$,
    \begin{align}
        \frac{1}{2}\int_{\R^2} \rho(t)\abs{u(t)}^2 \dd x + \nu \int_0^t \int_{\R^2} \abs{\nabla u(s)}^2 \dd x \dd s =  \frac{1}{2}\int_{\R^2} \rho_0\abs{u_0}^2 \dd x.
    \end{align}
    Hence, $(\rho,u)$ satisfies the the energy equality. It is also clear that, for every $\varphi \in C^\infty_c([0,\infty) \times \R^2)$,
    \begin{align*}
        \int_0^\infty \int_{\R^2} u \cdot \nabla \varphi \dd x \dd s = 0.
    \end{align*}
    From the convergence in $E$, we conclude that
    \begin{align*}
        u_n \to u \quad \text{ in } \quad L^4((0,\infty) \times \R^2),\quad u_n \otimes u_n \to u \otimes u \quad \text{ in } \quad L^2((0,\infty) \times \R^2),
    \end{align*}
    and this implies that we can pass to the limit in the weak formulation of the momentum equation,\ie , we get that, for every $\varphi \in C_{c,\sigma}^\infty([0,\infty) \times \R^2;\R^2)$,
    \begin{align*}
        \int_0^\infty \int_{\R^2} &\rho \partial_s \varphi \cdot u+
        \rho u \otimes u : \nabla \varphi \dd x \dd s  
        - \nu \int_0^\infty \int_{\R^2} \nabla u : \nabla \varphi \dd x \dd s = -\int_{\R^2} \rho_0 u_0 \cdot \varphi(0)\dd x.
    \end{align*}
    Moreover, we have 
    \begin{align*}
        \norm{(\rho,u)}_Z \leq C
    \end{align*}
    as all bounds appearing in the definition of $\norm{\cdot}_Z$ are preserved. Thus, $(\rho,u)$ is also an immediately strong solution.

    Since $\rho \in C_{w^*}([0,\infty);L^\infty(\R^2))$, we have, by \cref{lem:weakcontibeta}, for every $\alpha>0$,
    \begin{align*}
        \rho^\alpha, \rho^{2 \alpha} \in C_{w_*}([0,\infty);L^\infty(\R^2)).
    \end{align*}
    From this for $\alpha=1/2,1$, we conclude, by using \cref{lem:weaktimesstrong} \cref{it:contiproduct}, that
    \begin{align*}
        \rho u, \sqrt{\rho} u \in C([0,\infty);L^2(\R^2)),
    \end{align*}
    which completes the proof.
\end{proof}

Now, we can prove \cref{thm:mainthmrefined}.

\begin{proof}[Proof of \cref{thm:mainthmrefined}]
    It only remains to show the uniqueness and the stability estimate. To this end, let $(\rho_0,u_0)$ satisfy \cref{ass:data1} and let $v_0 \in L^2_\sigma(\R^2)$. Furthermore, let $(\varrho,v)$ be a Leray--Hopf solution of with respect to the initial data $(\rho_0,v_0)$, and let $(\rho_n,u_n)$ and $(\rho,u)$ be as in the previous proof. As we mentioned, the estimate \cref{eq:stabcauchyproof} remains true if we replace $(\rho_m,u_m)$ by $(\varrho,v)$, hence, for every $n \in \N$,
    \begin{align}\label{eq:proofmainN1}
        \frac{1}{2} \sup_{t \in (0,\infty)} \norm{u_n(t)-v(s)}_2^2 + \nu \int_0^\infty \norm{\nabla (u_n-v)}^2_2 \leq C \norm{u^0_n-v_0}^2_2.
    \end{align}
    The previous proof shows that $C$ does not depend on $n.$ By passing to the limit, we deduce that 
    \begin{align}\label{eq:proofmainN2}
        \frac{1}{2} \sup_{t \in (0,\infty)} \norm{u(t)-v(s)}_2^2 + \nu \int_0^\infty \norm{\nabla (u-v)}^2_2 \leq C \norm{u^0-v_0}^2_2.
    \end{align}
    We see, if $u_0=v_0,$ then we have that $u=v$ and, by \cref{eq:Wminus14estiphi} in \cref{prop:wminus4stab1}, we have that, for every $t>0$ and every $\phi \in C^\infty_0(\R^2)$,
    \begin{align}\label{eq:proofmainN3}
        \abs{\skal{\rho(t)-\varrho(t),\phi}} \leq 0,
    \end{align}
    and, therefore, we have that $\rho = \varrho.$

    We conclude that every Leray-Hopf solution with respect to initial data $(\rho_0,u_0)$ satisfying \cref{ass:data1} coincides with the solution constructed in \cref{prop:smoothcauchy}, \ie, Leray--Hopf solutions are unique. Since $(\rho,u)$ is the only Leray--Hopf solution with respect to the initial data $(\rho_0,u)$, the inequality \cref{eq:proofmainN2} is already the desired stability estimate \cref{it:stability}.
\end{proof}

\begin{appendix}

\section{Some technical tools}

Here we collect some auxiliary statements that we need throughout this notes.

\begin{lemma}\label{lem:weaktimesstrong}
    Let $\Omega \subset \R^d$ be measurable. We have that following statements:
    \begin{enumerate}[label=(\roman*)]
        \item \label{it:weaktimesstrong} Assume that $\rho_n \wstarlim \rho$ in $L^\infty(\Omega)$ and that $u_n \to u$ in $L^2(\Omega),$ then
        \begin{align*}
            \rho_n u_n \wlim \rho u \quad \text{in} \quad L^2(\Omega).
        \end{align*}
        \item \label{it:weaksqstimesstrong} Assume that additionally $\rho_n^2 \wlim \rho$ in $L^2(\Omega)$, then
        \begin{align*}
            \rho_n u_n \to \rho u \quad \text{in} \quad L^2(\Omega).
        \end{align*}
        \item \label{it:contiproduct} Let $\rho, \rho^2 \in C_{w^*}([0,\infty);L^\infty(\R^2))$ and $u \in C([0,\infty);L^2(\R^2)),$ then we have
        \begin{align*}
            \rho u \in C([0,\infty);L^2(\R^2)).
        \end{align*}
    \end{enumerate}
\end{lemma}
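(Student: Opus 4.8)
The plan is to establish the three parts in order, each reducing to an elementary algebraic splitting of the product together with the uniform boundedness principle. For \cref{it:weaktimesstrong}, I would first record that a weak-$*$ convergent sequence in $L^\infty(\Omega) = (L^1(\Omega))^*$ is bounded, so $M \coloneqq \sup_n \norm{\rho_n}_\infty < \infty$. Writing $\rho_n u_n - \rho u = \rho_n (u_n - u) + (\rho_n - \rho) u$, the first summand is controlled in $L^2(\Omega)$ by $M \norm{u_n - u}_2 \to 0$, hence converges to $0$ strongly and a fortiori weakly. For the second summand, given any $\varphi \in L^2(\Omega)$ one has $u\varphi \in L^1(\Omega)$, so $\int_\Omega (\rho_n - \rho) u\varphi \to 0$ by the weak-$*$ convergence; thus $(\rho_n - \rho) u \wlim 0$ in $L^2(\Omega)$. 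Summing the two contributions gives $\rho_n u_n \wlim \rho u$.

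For \cref{it:weaksqstimesstrong}, the extra hypothesis is to be read as $\rho_n^2 \wstarlim \rho^2$ in $L^\infty(\Omega)$. Since $L^2(\Omega)$ is a Hilbert space and \cref{it:weaktimesstrong} already gives $\rho_n u_n \wlim \rho u$ there, it suffices to prove norm convergence $\norm{\rho_n u_n}_2^2 = \int_\Omega \rho_n^2 |u_n|^2 \to \int_\Omega \rho^2 |u|^2 = \norm{\rho u}_2^2$, because weak convergence plus convergence of norms forces strong convergence in a Hilbert space. I would split $\rho_n^2 |u_n|^2 - \rho^2 |u|^2 = \rho_n^2\big(|u_n|^2 - |u|^2\big) + (\rho_n^2 - \rho^2)|u|^2$. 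Using $|u_n|^2 - |u|^2 = (u_n - u)\cdot(u_n + u)$ and the Cauchy--Schwarz inequality yields $\norm{\,|u_n|^2 - |u|^2\,}_1 \le \norm{u_n - u}_2\,\norm{u_n + u}_2 \to 0$, so the first term tends to $0$ since $\sup_n \norm{\rho_n^2}_\infty < \infty$; the second term tends to $0$ because $|u|^2 \in L^1(\Omega)$ and $\rho_n^2 \wstarlim \rho^2$. This gives $\norm{\rho_n u_n}_2 \to \norm{\rho u}_2$ and hence the claimed strong convergence.

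For \cref{it:contiproduct}, I would argue by sequential continuity, which characterizes continuity into the metric space $L^2(\R^2)$. Fix $t_0 \in [0,\infty)$ and a sequence $t_n \to t_0$ in $[0,\infty)$. The assumptions $\rho, \rho^2 \in C_{w^*}([0,\infty);L^\infty(\R^2))$ give $\rho(t_n) \wstarlim \rho(t_0)$ and $\rho^2(t_n) \wstarlim \rho^2(t_0)$ in $L^\infty(\R^2)$, while $u \in C([0,\infty);L^2(\R^2))$ gives $u(t_n) \to u(t_0)$ in $L^2(\R^2)$; since $\rho^2(t_n)$ is the pointwise square of $\rho(t_n)$, the hypotheses of \cref{it:weaksqstimesstrong} apply to the sequences $(\rho(t_n))_n$ and $(u(t_n))_n$, whence $\rho(t_n) u(t_n) \to \rho(t_0) u(t_0)$ in $L^2(\R^2)$. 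The endpoint $t_0 = 0$ is covered since one-sided sequences are allowed.

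The only genuinely non-routine point is \cref{it:weaksqstimesstrong}: one must recognise that proving convergence of the $L^2$-norms suffices and then handle the quadratic term by exploiting that $\rho_n^2$ converges against the fixed $L^1$ weight $|u|^2$; the remaining arguments are bookkeeping with the uniform boundedness principle and Hölder's inequality.
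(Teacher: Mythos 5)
Your proof is correct and follows essentially the same route as the paper: an elementary splitting argument for (i), weak convergence plus convergence of the $L^2$-norms (verified through the same decomposition, using $|u|^2 \in L^1(\Omega)$ as a fixed weight) to upgrade to strong convergence in (ii), and sequential continuity via (ii) for (iii). Your reading of the hypothesis in (ii) as $\rho_n^2 \wstarlim \rho^2$ is the intended one.
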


\begin{proof}
    Statement \cref{it:weaktimesstrong} is elementary. The statement in \cref{it:weaksqstimesstrong} follows from \cref{it:weaktimesstrong} and the observation
    \begin{align*}
        \norm{\rho_n u_n}_2 \to \norm{\rho u}_2.
    \end{align*}
    From the assumptions and \cref{it:weaksqstimesstrong}, it follows directly that $\rho u$ is sequentially continuous with respect to the $L^2(\R^2)$ norm, which completes the proof.
\end{proof}

Let $\Phi \colon \R \to \R$ denote the Gauss error function, \ie,
\begin{align*}
    \Phi(z) \coloneqq \frac{2}{\sqrt{\pi}} \int_0^z \exp(-t^2) \dd t,
\end{align*}
and let $\widetilde{\Phi} = 1 - \Phi.$

\begin{lemma}\label{lem:intfC}
    Let $C>0$ and define $f_C \colon (0,1) \to (0,\infty)$ be given by 
    \begin{align*}
        f_C(x) = \exp \left( C \abs{\ln (x)}^{1/2} \right) = \exp \left( C \sqrt{-\ln (x)} \right).
    \end{align*}
    Then $f_C \in L^1(0,1),$ and we have 
    \begin{align*}
        \int_0^1 f_C(x) \dd x = \frac{2+\sqrt{\pi}Ce^{C^2/4}\widetilde{\Phi}(-C/2)}{2}.
    \end{align*}
\end{lemma}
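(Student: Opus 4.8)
The plan is to reduce the integral to a standard Gaussian integral via the substitution $t = \sqrt{-\ln x}$, i.e.\ $x = e^{-t^2}$. First I would record that this change of variables gives $\dd x = -2t\, e^{-t^2}\dd t$ and maps $(0,1)$ onto $(0,\infty)$ with reversed orientation, so that
\begin{align*}
    \int_0^1 f_C(x)\dd x = \int_0^\infty 2t\, e^{Ct - t^2}\dd t.
\end{align*}
Since the integrand on the right is continuous on $[0,\infty)$ and decays faster than $e^{-t^2/2}$ at infinity, this identity already yields $f_C \in L^1(0,1)$. (Alternatively, integrability can be seen directly from the pointwise bound $f_C(x) \le e^{C^2/(4\eps)}\, x^{-\eps}$ for any $\eps \in (0,1)$, obtained from Young's inequality $Ct \le \eps t^2 + C^2/(4\eps)$ applied with $t = \sqrt{-\ln x}$.)

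Next I would complete the square, $Ct - t^2 = \tfrac{C^2}{4} - \bigl(t - \tfrac{C}{2}\bigr)^2$, and substitute $v = t - \tfrac{C}{2}$ to obtain
\begin{align*}
    \int_0^1 f_C(x)\dd x = 2 e^{C^2/4} \int_{-C/2}^\infty \Bigl(v + \tfrac{C}{2}\Bigr)\, e^{-v^2}\dd v.
\end{align*}
The integral on the right splits into two pieces. The first, $\int_{-C/2}^\infty v\, e^{-v^2}\dd v$, is elementary and equals $\tfrac12 e^{-C^2/4}$. The second, $\int_{-C/2}^\infty e^{-v^2}\dd v$, is by definition of the (complementary) error function equal to $\tfrac{\sqrt\pi}{2}\,\widetilde{\Phi}(-C/2)$, since $\widetilde{\Phi} = 1 - \Phi = \mathrm{erfc}$ and $\int_a^\infty e^{-v^2}\dd v = \tfrac{\sqrt\pi}{2}\,\mathrm{erfc}(a)$.

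Finally, combining the two evaluations gives
\begin{align*}
    \int_0^1 f_C(x)\dd x = 2 e^{C^2/4}\Bigl( \tfrac12 e^{-C^2/4} + \tfrac{C}{2}\cdot\tfrac{\sqrt\pi}{2}\,\widetilde{\Phi}(-C/2)\Bigr) = 1 + \tfrac{\sqrt\pi}{2}\, C\, e^{C^2/4}\,\widetilde{\Phi}(-C/2) = \frac{2+\sqrt{\pi}C e^{C^2/4}\widetilde{\Phi}(-C/2)}{2},
\end{align*}
which is the asserted identity. There is no genuine difficulty in this argument; the only points requiring care are the orientation of the substitution $t = \sqrt{-\ln x}$ and fixing the normalisation convention for $\Phi$ so that $\int_a^\infty e^{-v^2}\dd v = \tfrac{\sqrt\pi}{2}\,\widetilde{\Phi}(a)$.
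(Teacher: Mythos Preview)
Your proof is correct. It differs from the paper's argument in method: the paper writes down an explicit antiderivative
\[
F_C(x) = x\,\exp\bigl(C\sqrt{-\ln x}\bigr) - \tfrac{\sqrt{\pi}}{2}\,C\,e^{C^2/4}\,\Phi\bigl(\sqrt{-\ln x}-\tfrac{C}{2}\bigr)
\]
(modulo what appears to be a typographical slip of $e^{C^2/2}$ for $e^{C^2/4}$ in the paper), verifies $F_C' = f_C$ by direct differentiation, and then evaluates the limits at $0$ and $1$. Your approach is instead constructive: the substitution $x = e^{-t^2}$ reduces the problem to a Gaussian moment, and completing the square makes the error-function term appear naturally rather than by inspection. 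Both arguments are elementary and of comparable length; yours has the advantage of explaining where the closed form comes from, while the paper's has the advantage of giving the indefinite integral explicitly. Either is entirely adequate here, since the lemma is only used qualitatively (finiteness of the integral) in \cref{lem:Lpinequality}.
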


\begin{proof}
    We set, for every $x \in (0,1)$,
    \begin{align*}
        F_C(x) = x \exp \left( C \sqrt{-\ln (x)} \right) - \frac{\sqrt{\pi}C e^{C^2/2}\Phi((2\sqrt{-\ln(x)}-C)/2)}{2}.
    \end{align*}
    A direct computation shows that, for every $x \in (0,1)$,
    \begin{align*}
        F_C'(x) = f_C(x).
    \end{align*}
    and that
    \begin{align*}
        \lim_{x \to 0} F_C(x) = - \frac{\sqrt{\pi}C e^{C^2/2}}{2}, \quad
        \lim_{x \to 1} F_C(x) = 1 - \frac{\sqrt{\pi}C e^{C^2/2}\Phi(-C/2)}{2}
    \end{align*}
    This completes the proof.
\end{proof}

\begin{lemma}[Minkowski's inequality for integrals] \label{lem:minkow}
    Let $(\Omega_1,\mu_1),(\Omega_2,\mu_2)$ be $\sigma$-finite measure spaces, $p \in (1,\infty)$, and let $F \colon \Omega_1 \times \Omega_2$ be measurable. Then we have
    \begin{align*}
        \left( \int_{\Omega_2} \abs{\int_{\Omega_1} F(x,y) \dd \mu_1(y)}^p \dd \mu_2(y) \right)^{1/p} \leq  \int_{\Omega_1} \left( \int_{\Omega_2} \abs{ F(x,y)}^p \dd \mu_1(y)\right)^{1/p} \dd \mu_2(y) 
    \end{align*}
\end{lemma}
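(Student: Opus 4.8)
The plan is to deduce the inequality from $L^p$--$L^{p'}$ duality combined with Tonelli's theorem, where $p' \coloneqq p/(p-1) \in (1,\infty)$ denotes the conjugate exponent. Since $\abs{\int_{\Omega_1} F(x,\cdot)\dd\mu_1(x)} \leq \int_{\Omega_1}\abs{F(x,\cdot)}\dd\mu_1(x)$ pointwise on $\Omega_2$, and since the right-hand side of the asserted inequality is unchanged when $F$ is replaced by $\abs{F}$, we may and do assume $F \geq 0$. Set $G(y) \coloneqq \int_{\Omega_1} F(x,y)\dd\mu_1(x) \in [0,\infty]$, which is $\mu_2$-measurable by Tonelli's theorem; it suffices to show $\norm{G}_{L^p(\mu_2)} \leq \int_{\Omega_1}\norm{F(x,\cdot)}_{L^p(\mu_2)}\dd\mu_1(x)$, and we may assume the right-hand side is finite.

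First I would invoke the $\sigma$-finite version of the duality characterization
\[
    \norm{G}_{L^p(\mu_2)} = \sup\Bigl\{ \int_{\Omega_2} G\,h\dd\mu_2 : h \geq 0,\ h \text{ simple},\ \mu_2(\{h\neq 0\})<\infty,\ \norm{h}_{L^{p'}(\mu_2)}\leq 1 \Bigr\},
\]
valid for every nonnegative measurable $G$ with the convention that both sides may equal $+\infty$; this is exactly where $\sigma$-finiteness of $\mu_2$ enters, and it spares us from assuming a priori that $G\in L^p(\mu_2)$. Fixing such an $h$, the nonnegativity of $F$ and $h$ makes Tonelli's theorem applicable, whence
\[
    \int_{\Omega_2} G(y)\,h(y)\dd\mu_2(y) = \int_{\Omega_1}\Bigl( \int_{\Omega_2} F(x,y)\,h(y)\dd\mu_2(y) \Bigr)\dd\mu_1(x),
\]
and Hölder's inequality in $y$ with exponents $p$ and $p'$ bounds the inner integral by $\norm{F(x,\cdot)}_{L^p(\mu_2)}\norm{h}_{L^{p'}(\mu_2)} \leq \norm{F(x,\cdot)}_{L^p(\mu_2)}$. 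Integrating over $\Omega_1$ and taking the supremum over all admissible $h$ gives the claim, and undoing the reduction $F\geq 0$ finishes the proof.

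The step I expect to be the main obstacle is justifying the duality formula for $\norm{G}_{L^p(\mu_2)}$ without knowing in advance that $G\in L^p(\mu_2)$: one exhausts $\Omega_2$ by sets $E_k\uparrow\Omega_2$ of finite $\mu_2$-measure, replaces $G$ by $\min(G,k)\mathbf{1}_{E_k}$, applies the elementary finite-measure duality to each, and passes to the limit via the monotone convergence theorem. If one prefers to avoid duality altogether, the same conclusion follows from the direct argument: assume first that $\mu_1,\mu_2$ are finite and $F$ is bounded, so that $G\in L^p(\mu_2)$; write $\norm{G}_{L^p(\mu_2)}^p = \int_{\Omega_2} G^{p-1}\bigl(\int_{\Omega_1}F\dd\mu_1\bigr)\dd\mu_2$, swap the order of integration by Tonelli, apply Hölder using $\norm{G^{p-1}}_{L^{p'}(\mu_2)} = \norm{G}_{L^p(\mu_2)}^{p-1}$, divide by $\norm{G}_{L^p(\mu_2)}^{p-1}$, and then remove the finiteness and boundedness restrictions by a monotone approximation of $F$ together with an exhaustion of $\Omega_1$ and $\Omega_2$ and the monotone convergence theorem; $\sigma$-finiteness is once more what makes the exhaustion available. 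The excluded endpoint cases $p=1$ and $p=\infty$ reduce, respectively, to Tonelli's theorem and to an elementary pointwise bound.
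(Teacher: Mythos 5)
Your proof is correct. Note that the paper does not actually prove this lemma at all: its ``proof'' is a one-line citation to \cite[Prop.~1.3]{BahouriCheminDanchin2011}, so any complete argument is new relative to the paper. Your main route --- reduce to $F\geq 0$, characterize $\norm{G}_{L^p(\mu_2)}$ by duality against nonnegative simple functions with finite-measure support, swap the order of integration by Tonelli, and apply H\"older in the inner variable --- is the standard textbook proof and is carried out correctly; in particular you correctly identify and resolve the only delicate point, namely that the duality formula must be justified without assuming a priori that $G\in L^p(\mu_2)$, which your truncation-and-exhaustion argument (using $\sigma$-finiteness of $\mu_2$) handles. The alternative direct argument you sketch (write $\norm{G}_p^p=\int G^{p-1}G$, swap, H\"older, divide by $\norm{G}_p^{p-1}$) is also fine, and you rightly note that the division step forces the preliminary reduction to finite measures and bounded $F$, removed afterwards by monotone convergence. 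One small point worth making explicit if this were written out in full: the measurability of $x\mapsto\norm{F(x,\cdot)}_{L^p(\mu_2)}$, needed for the right-hand side to be meaningful, follows from Tonelli applied to $\abs{F}^p$; and the joint measurability of $F$ (implicit in the lemma's statement) is what licenses both applications of Tonelli.
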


\begin{proof}
    See \cite[Prop. 1.3]{BahouriCheminDanchin2011} .
\end{proof}

The following lemma is an extension of \cite[Lemma 5.2]{HaoShaoWeiZhang2024}. For the sake of completeness, we give a proof.

\begin{lemma}\label{lem:Lpinequality}
    Let $C >0, p \in (1,\infty)$ and $t \in (0,\infty).$ Then there is a $L = L(p,C),$ which is independent of $t$ such that, for every $f \in L^p(0,t)$, we have  
    \begin{align*}
        \int_0^t \abs{ \left( \frac{1}{s}\int_0^s \exp\left(C\abs{\ln(\tau/s)}^{1/2}\right) f(\tau) \dd \tau \right)}^p \dd s \leq L \int_0^t \abs{f(\tau)}^p \dd s.
    \end{align*}
\end{lemma}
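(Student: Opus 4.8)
The plan is to recognise the operator
\[
  Tf(s) \coloneqq \frac{1}{s}\int_0^s \exp\bigl(C\abs{\ln(\tau/s)}^{1/2}\bigr)\, f(\tau)\,\dd\tau
\]
as an integral operator whose kernel is homogeneous of degree $-1$, and then to apply Minkowski's integral inequality (\cref{lem:minkow}) after rescaling the inner variable. The first step is a harmless reduction to $t=\infty$: if we extend $f$ by zero outside $(0,t)$, the value of $Tf(s)$ for $s\in(0,t)$ is unchanged, and since the integrand $\abs{Tf}^p$ is nonnegative we have $\int_0^t \abs{Tf(s)}^p\,\dd s \le \int_0^\infty \abs{Tf(s)}^p\,\dd s$. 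Hence it suffices to show $\norm{Tf}_{L^p(0,\infty)}\le L^{1/p}\norm{f}_{L^p(0,\infty)}$ with $L$ depending only on $p$ and $C$.

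Next I would substitute $\tau = s\sigma$ in the inner integral, so that $\abs{\ln(\tau/s)} = -\ln\sigma$ and $\dd\tau = s\,\dd\sigma$, which gives
\[
  Tf(s) = \int_0^1 \exp\bigl(C(-\ln\sigma)^{1/2}\bigr)\, f(s\sigma)\,\dd\sigma .
\]
Applying \cref{lem:minkow} with $\Omega_1=(0,1)$ (variable $\sigma$) and $\Omega_2=(0,\infty)$ (variable $s$), together with the scaling identity $\int_0^\infty \abs{f(s\sigma)}^p\,\dd s = \sigma^{-1}\norm{f}_{L^p(0,\infty)}^p$, yields
\[
  \norm{Tf}_{L^p(0,\infty)} \le \Bigl(\int_0^1 \sigma^{-1/p}\exp\bigl(C(-\ln\sigma)^{1/2}\bigr)\,\dd\sigma\Bigr)\norm{f}_{L^p(0,\infty)} \eqqcolon L^{1/p}\norm{f}_{L^p(0,\infty)} .
\]

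It then remains to verify $L = \bigl(\int_0^1 \sigma^{-1/p}\exp(C(-\ln\sigma)^{1/2})\,\dd\sigma\bigr)^p<\infty$. Substituting $\sigma=e^{-y}$ transforms the integral into $\int_0^\infty \exp\bigl(C\sqrt{y}-y/p'\bigr)\,\dd y$, where $p'$ is the conjugate exponent of $p$; since $p>1$ we have $1/p'>0$, the exponent tends to $-\infty$, and the integral converges, giving a finite $L=L(p,C)$ independent of $t$ and $f$. (If a closed form is wanted, this last integral can be evaluated in terms of the Gauss error function exactly as in \cref{lem:intfC}, but it is not needed.) Raising the displayed inequality to the $p$-th power and combining with the reduction of the first step completes the proof. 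There is no genuinely hard step here; the only points requiring care are the reduction to $(0,\infty)$ via nonnegativity of the integrand, the correct bookkeeping of the rescaling inside Minkowski's inequality, and the elementary convergence check, in which the hypothesis $p>1$ is used crucially.
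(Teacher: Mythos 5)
Your proof is correct and follows essentially the same route as the paper: rescale the inner variable to write the operator with the kernel $\exp(C(-\ln\sigma)^{1/2})$ on $(0,1)$, apply Minkowski's integral inequality (\cref{lem:minkow}), and bound the resulting constant $\int_0^1 \sigma^{-1/p}\exp(C(-\ln\sigma)^{1/2})\,\dd\sigma$. The only cosmetic difference is the finiteness check of that constant, where you substitute $\sigma=e^{-y}$ directly instead of the paper's H\"older argument combined with \cref{lem:intfC}; both are valid.
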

\begin{proof}
    Let $f \in L^p(0,t).$ We estimate 
    \begin{align*}
        \int_0^t & \abs{ \left( \frac{1}{s}\int_0^s \exp\left(C\abs{\ln(\tau/s)}^{1/2}\right) f(\tau) \dd \tau \right)}^p \dd s\\
        & \leq \int_0^t \abs{ \left( \int_0^1 \exp\left(C\abs{\ln(\tau)}^{1/2}\right) f(s \tau) \dd \tau \right)}^p \dd s\\
        & \leq \left( \int_0^1 \left( \int_0^t \exp\left(C\abs{\ln(\tau)}^{1/2}\right)^p \abs{f(s \tau)}^p \dd s \right)^{1/p} \dd \tau \right)^p\\
        & \leq \left( \int_0^1 \exp\left(C\abs{\ln(\tau)}^{1/2}\right) \tau^{-1/p} \left( \int_0^t \tau \abs{f(s \tau)}^p \dd s \right)^{1/p} \dd \tau \right)^p\\
        & \leq \left( \int_0^1 \tau^{-1/p} \exp\left(C\abs{\ln(\tau)}^{1/2}\right)  \left( \int_0^{t \tau} \abs{f(s)}^p \dd s \right)^{1/p} \dd \tau \right)^p\\
        & \leq \left( \int_0^1 \tau^{-1/p} \exp\left(C\abs{\ln(\tau)}^{1/2}\right) \dd \tau \right)^p\int_0^t \abs{f(s)}^p \dd s.
    \end{align*}
    In the first step, we used a change of variables, in the second step, we used \cref{lem:minkow} and in the fourth step, we used a change of variables again.

    Fix $q \in (1,p)$ and let $q'=1/q-1.$ Then, we can estimate
    \begin{align*}
        \int_0^1 \tau^{-1/p} \exp\left(C\abs{\ln(\tau)}^{1/2}\right) \dd \tau \leq \left( \int_0^1 \tau^{-q/p} \dd \tau \right)^{1/q} \cdot \left( \int_0^1 \exp \left(Cq' \abs{\ln(\tau)}^{1/2} \right) \dd \tau \right)^{1/q'}.
    \end{align*}
    The first factor on the right-hand side is finite since $q/p<1,$ and the second factor is bounded by \cref{lem:intfC}. Thus, we can set 
    \begin{align*}
        L \coloneqq \left( \int_0^1 \tau^{-q/p} \dd \tau \right)^{p/q} \cdot \left( \int_0^1 \exp \left(Cq' \abs{\ln(\tau)}^{1/2} \right) \dd \tau \right)^{p/q'},
    \end{align*}
    and this completes the proof.
\end{proof}

\section{Proof of \cref{lem:weakcontibeta} and \cref{prop:propflowII}}\label{sec:appendixB}

In this section, we give the proofs of \cref{lem:weakcontibeta} and \cref{prop:propflowII}.

\begin{proof}[Proof of \cref{lem:weakcontibeta}]
    Let $(\varphi_\gamma)_{\gamma > 0}$ denote the standard mollification sequence. Since, by \cref{rmk:novacuum}, for every $t \geq 0,$
    \begin{align*}
        0 < c_0 \leq \rho(t,x) \leq C_0 \quad \text{for~a.e.~$x \in \R^2$},
    \end{align*}
    we deduce that, for every $t \geq 0$, 
    \begin{align*}
        0 < c_0 \leq \rho_\gamma (t,x) \leq C_0 \quad \text{and} \quad  \rho_\gamma (t,x) \to \rho(t,x) \quad \text{for~a.e.~$x \in \R^2$},
    \end{align*}
    This implies that, for every $\beta \in C^1((0,\infty))$ and every $t \geq 0$,
    \begin{align*}
        \beta(\rho_\gamma (t,x)) \to \beta(\rho(t,x)) \quad \text{for~a.e.~$x \in \R^2$}.
    \end{align*}
    Furthermore, it is easy to see that 
    \begin{align*}
        \rho_\gamma \in C_{w^*}([0,T];L^\infty(\R^2)).
    \end{align*}

    Let $\psi \in C^\infty_c(\R^2)$ be a smooth radial cut-off function with $\psi \equiv 1$ on $B_{1/2}(0)$ and $\psi \equiv 0$ outside $B_1(0),$ and set, for every $n \in \N$, $\psi_n \coloneqq \psi(\cdot /n).$ Fix $T>0.$ Note that in $L^2([0,T];W^{1,4}_0(B_n(0)))$
    \begin{align*}
        \partial_t(\rho_\gamma \psi_n) = \psi_n \partial_t \rho_\gamma = - \psi_n \div (u \rho)_\gamma
    \end{align*}
    Since
    \begin{align*}
        H^1([0,T];W^{1,4}_0(B_n(0))) \hookrightarrow C([0,T];W^{1,4}_0(B_n(0))) \hookrightarrow C([0,T] \times B_n(0)),
    \end{align*}
    we can modify $\rho_\gamma \psi_n$ to $\widetilde{\rho}_{\gamma,n}$ such that 
    \begin{align*}
        \widetilde{\rho}_{\gamma,n} \in C([0,T] \times B_n(0)).
    \end{align*}
    It is easy to see that $\rho_{\gamma,n} \in C_{w^*}([0,T];L^\infty(\R^2))$, and we get that, for every $t \in [0,T]$,
    \begin{align*}
        \widetilde{\rho}_{\gamma,n}(t) = \psi_n \rho_\gamma(t) \quad \text{ in } \quad L^\infty(\R^2).
    \end{align*}
    Owing to the regularity in space of $\widetilde{\rho}_{\gamma,n}$ and $\psi_n \rho_\gamma,$ we deduce that, for every $\gamma > 0$ and every $n \in \N$,
    \begin{align*}
        \rho_\gamma(t,x) \psi_n(x) = \widetilde{\rho}_{\gamma,n}(t,x) \quad \text{ for every } \quad (t,x) \in [0,T] \times \R^2.
    \end{align*}
    We conclude that, for every $\gamma > 0$,
    \begin{align*}
        \rho_\gamma \in C([0,T] \times \R^2).
    \end{align*}
    Let $\phi \in C^\infty_c(\R^2)$. Now we can show that, for every $t \in [0,T]$,
    \begin{align*}
        \int_{\R^2} \beta(\rho(t)) \dd x - \int_{\R^2} \beta(\rho(0)) \dd x = \int_0^t \int_{\R^2} \beta(\rho) u \cdot \nabla \phi \dd x \dd s.
    \end{align*}
    Fix $R>0$ such that $\supp \{ \phi \} \subset B_R(0),$ and $\Omega_T \coloneqq [0,T] \times \overline{B_R(0))}$. Since
    \begin{align*}
        \rho_\gamma \in H^1([0,T];W^{1,4}(B_R(0))),
    \end{align*}
    by \cite[Lemma 2.2]{Masuda1984}, there is a sequence $(\rho_{\gamma,m}) \subset C^1([0,T] \times B_R(0))$ such that 
    \begin{align}\label{eq:convL2W14}
        \norm{\rho_{\gamma,m}-\rho_\gamma}_{H^1([0,T];W^{1,4}_0(B_R(0)))} \to 0.
    \end{align}
    In particular, we have that 
    \begin{align}\label{eq:convLinfty}
        \norm{\rho_{\gamma,m}-\rho_\gamma}_{L^\infty(\Omega_T)} \to 0.
    \end{align}
    Thus, we can assume, for every $m \in \N$ and every $(t,x) \in [0,T] \times \overline{B_R(0))}$, that $\rho_{m,\gamma} \geq c_0/2.$ Employing \cref{eq:convL2W14}, \cref{eq:convLinfty} and the chain rule, yields
    \begin{align*}
        \int_{\R^2} \beta(\rho_\gamma(t)) \dd x - \int_{\R^2} \beta(\rho_\gamma(0)) \dd x 
        & = \lim_{m \to \infty} \int_{\R^2} \beta(\rho_{\gamma,m}(t)) \dd x - \int_{\R^2} \beta(\rho_{\gamma,m}(0)) \dd x \\
        & = \lim_{m \to \infty} \int_0^T \int_{\R^2} \partial_t \beta(\rho_{\gamma,m}) \phi \dd x \dd t\\
        & = \lim_{m \to \infty} \int_0^T \int_{\R^2} \partial_t \rho_{\gamma,m} \beta'(\rho_{\gamma,m}) \phi \dd x \dd t\\
        & = \int_0^T \int_{\R^2} \partial_t \rho_{\gamma} \beta'(\rho_{\gamma}) \phi \dd x \dd t\\
    \end{align*}
    In the third step, we used that
    \begin{align*}
        \partial_t \rho_{\gamma,m} \to \pt \rho_{\gamma} \quad \text{ and } \quad \beta'(\rho_{\gamma,m}) \to \beta'(\rho_{\gamma}) \quad \text{ in } L^2(\Omega_T). 
    \end{align*}
    The latter follows from the fact $\rho_{\gamma,m}$ is bounded away from $0$ which allows us to conclude
    \begin{align*}
        \norm{\beta'(\rho_{\gamma,m}) \to \beta'(\rho_{\gamma})}_{L^\infty(\Omega_T)} \to 0.
    \end{align*}
    Since, due to the smoothness of $x \mapsto \rho_\gamma(t,x)$,
    \begin{align*}
        \nabla \beta(\rho_\gamma) = \beta'(\rho_\gamma) \nabla \beta(\rho_\gamma)
    \end{align*}
    and since 
    \begin{align*}
        \partial_t \rho_\gamma + u \cdot \nabla \rho_\gamma = R_\gamma,
    \end{align*}
    where , for every $\gamma > 0$,
    \begin{align*}
        R_\gamma \coloneqq u \cdot \nabla \rho_\gamma - \div ((\rho u)_\gamma) = \div(\rho_\gamma u - (\rho u)_\gamma),
    \end{align*}
    we have that 
    \begin{align*}
        \int_{\R^2} \beta(\rho_\gamma(t)) \dd x - \int_{\R^2} \beta(\rho_\gamma(0)) \dd x = \int_0^t \int_{\R^2} \beta(\rho_\gamma) u \cdot \nabla \phi \dd x \dd s + \int_0^t \int_{\R^2} \beta'(\rho_\gamma) R_\gamma \phi \dd x \dd s.
    \end{align*}
    Using that, for every $t \geq 0$,
    \begin{align*}
        \beta(\rho_\gamma(t)) \underset{\gamma \to 0}{\wstarlim} \beta(\rho(t)) \quad \text{ and } \quad R_\gamma \underset{\gamma \to 0}{\longrightarrow} 0 \quad \text{ in } \quad L^2([0,t];L^2_\loc(\R^2)),
    \end{align*}
    and that, for every $t \geq 0$ and every $\gamma$, 
    \begin{align*}
        |\beta'(\rho_\gamma(t)) \phi| \leq C,
    \end{align*}
    we obtain, due to the compact support of $\phi$, for every $t>0,$
    \begin{align*}
        \int_{\R^2} \beta(\rho(t)) \phi \dd x - \int_{\R^2} \beta(\rho(0)) \phi \dd x = \int_0^t \int_{\R^2} \beta(\rho) u \cdot \nabla \phi \dd x \dd s,
    \end{align*}
    which completes the proof.
\end{proof}

Then, we recall some classical properties of the flow associated to a vector field $u$ satisfying the following regularity assumptions, for every $\eps > 0$, 
\begin{align}\label{ass:velocityflow}
    u \in L^1_\loc(0,\infty;H^1(\R^2) \cap W^{1,4}(\R^2)), \quad u \in C_b([\eps,\infty) \times \R^2) \quad \textrm{ and } \quad \nabla u \in C_b ([\eps,\infty) \times \R^2).
\end{align}

\begin{proposition}[Existence, uniqueness and regularity of the flow associated to $u$]\label{prop:propflowI}
    Suppose that $u$ satisfies \cref{ass:velocityflow}. Then there exists a unique mapping $X \colon (0,\infty)^2 \times \R^2 \to \R^2$ satisfying the following properties.
    \begin{enumerate}[label=(\roman*)]
        \item \label{it:ODEflow} For every $s \in (0,\infty)$ and every $x \in \R^2$, $X(\cdot,s,x) \in C^1((0,\infty))$ and $X(\cdot,s,x)$ satisfies the ODE
        \begin{align}\label{eq:ODEflow}
            \begin{cases}
                \partial_t X(t,s,x) = u(t,X(t,s,x)),\quad t>0\\
                X(s,s,x) = x.
            \end{cases}
        \end{align}
        \item \label{it:semigroupprop} For every $t,s,\tau \in (0,\infty)$ and every $x \in \R^2$,
        \begin{align} \label{eq:semigroupprop}
            X(t,\tau,X(\tau,s,x))=X(t,s,x).
        \end{align}
        In particular, for every $s,t \in (0,\infty)$, $X(s,t,\cdot)$ is invertible and we have
        \begin{align*}
            X^{-1}(t,s,\cdot) = X(s,t,x)
        \end{align*}
        \item \label{it:ODEderivative} For every $s \in (0,\infty)$, $X(\cdot,s,\cdot) \in C^1((0,\infty) \times \R^2)$ and we have that
        \begin{align}\label{eq:ODEderivative}
            \pt DX(t,s,x) = Du(t,x) \cdot DX(t,s,x).
        \end{align}
        \item \label{it:ODEjacobian} Let, for every $t,s>0$ and every $x \in \R^2$, $J(t,s,x) \coloneqq \det DX(t,s,x),$ then
        \begin{align}\label{eq:ODEjacobian}
            \pt J(t,s,x) = \div(u(t,x)) \cdot J(t,s,x).
        \end{align}
        \item \label{it:invflowtransport} For every $t \in (0,\infty),$ the mapping $X(t,\cdot,\cdot) \colon (0,\infty) \times \R^2 \to \R^2$ is in the class $C^1((0,\infty) \times \R^2)$ and 
        \begin{align*}
            \partial_s X(t,s,x) + (u(s,x) \cdot \nabla) X(t,s,x) = 0.
        \end{align*}
    \end{enumerate}
\end{proposition}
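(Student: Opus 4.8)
The field $u$ in \cref{prop:propflowI} is, for every $\eps>0$, continuous and globally Lipschitz in the space variable on $[\eps,\infty)\times\R^2$, with Lipschitz constant $\norm{\nabla u}_{L^\infty([\eps,\infty)\times\R^2)}$. The plan is therefore to run the classical Cauchy--Lipschitz (Picard--Lindel\"of) theory for non-autonomous ODEs on each slab $[\eps,\infty)\times\R^2$ and then exhaust $(0,\infty)$ by letting $\eps\to0$; the $L^1_\loc(0,\infty;H^1\cap W^{1,4})$ part of \cref{ass:velocityflow} is not needed here. Concretely, fix $s>0$ and $x\in\R^2$. For $0<\eps<s$ the Picard--Lindel\"of theorem yields a unique $X(\cdot,s,x)\in C^1([\eps,\infty))$ solving \eqref{eq:ODEflow}: global forward existence is automatic because $u$ is bounded on $[\eps,\infty)\times\R^2$, and backward existence down to time $\eps$ follows from the same Lipschitz bound. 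Since $\eps\in(0,s)$ is arbitrary we obtain a unique $X(\cdot,s,x)\in C^1((0,\infty))$, which proves \cref{it:ODEflow}. Two-sided uniqueness on $(0,\infty)$ then gives \cref{it:semigroupprop}: for fixed $s,\tau,x$ both $t\mapsto X(t,\tau,X(\tau,s,x))$ and $t\mapsto X(t,s,x)$ solve \eqref{eq:ODEflow} and agree at $t=\tau$, hence coincide; evaluating at $t=s$ shows $X(s,\tau,\cdot)$ is a bijection of $\R^2$ with inverse $X(\tau,s,\cdot)$. Joint continuity of $X$ on $(0,\infty)^2\times\R^2$ follows from a Grönwall comparison of solutions with nearby data, using the uniform slabwise Lipschitz bound.

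Since $\nabla u\in C_b([\eps,\infty)\times\R^2)$, the map $x\mapsto u(t,x)$ is $C^1$ with derivative jointly continuous in $(t,x)$, so the standard theorem on differentiable dependence on initial conditions applies on each slab: $X(\cdot,s,\cdot)\in C^1((0,\infty)\times\R^2)$, and $DX$ solves the linear variational equation \eqref{eq:ODEderivative} with $DX(s,s,\cdot)=\mathrm{Id}$. This proves \cref{it:ODEderivative}. For \cref{it:ODEjacobian}, set $J=\det DX$ and differentiate using Jacobi's formula together with \eqref{eq:ODEderivative}, which gives $\pt J=(\div u)\,J$, i.e.\ \eqref{eq:ODEjacobian}; integrating the linear ODE shows $J(t,s,x)=\exp\!\big(\int_s^t(\div u)(\sigma,X(\sigma,s,x))\dd\sigma\big)$, in particular $J$ never vanishes and $DX(t,s,\cdot)$ is everywhere invertible.

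The remaining part \cref{it:invflowtransport} is the only point that requires transferring regularity from the \emph{position} slot to the \emph{initial-time} slot. Fix a reference time $t_0>0$. By \cref{it:semigroupprop} the map $X(t_0,s,\cdot)$ is the inverse of $X(s,t_0,\cdot)$, and by \cref{it:ODEderivative} (applied with initial time $t_0$) the map $(s,x)\mapsto X(s,t_0,x)$ is $C^1$ with $D_xX(s,t_0,x)$ invertible by the previous step. Hence $(s,x)\mapsto(s,X(s,t_0,x))$ is a $C^1$ bijection of $(0,\infty)\times\R^2$ with everywhere-invertible differential (block lower-triangular, diagonal blocks $1$ and $D_xX$), so by the inverse function theorem it is a $C^1$-diffeomorphism and its inverse $(s,y)\mapsto(s,X(t_0,s,y))$ is $C^1$; thus $(s,x)\mapsto X(t_0,s,x)$ is $C^1$. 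Composing with the $C^1$ map $X(t,t_0,\cdot)$ and using $X(t,s,x)=X(t,t_0,X(t_0,s,x))$ from \eqref{eq:semigroupprop} gives $X(t,\cdot,\cdot)\in C^1((0,\infty)\times\R^2)$ for each fixed $t$. For the transport identity, differentiate the relation $X(t,s,X(s,s_0,x_0))=X(t,s_0,x_0)$ — whose right-hand side does not depend on $s$ — with respect to $s$; using $\partial_sX(s,s_0,x_0)=u(s,X(s,s_0,x_0))$, obtained by evaluating \eqref{eq:ODEflow} at its own terminal time, the chain rule yields, with $y=X(s,s_0,x_0)$, the identity $\partial_sX(t,s,y)+D_xX(t,s,y)\,u(s,y)=0$. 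As $(s_0,x_0)\mapsto X(s,s_0,x_0)$ is onto $\R^2$ (take $s_0=s$), this holds for every $y\in\R^2$, i.e.\ $\partial_sX(t,s,x)+(u(s,x)\cdot\nabla)X(t,s,x)=0$.

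The expected main obstacle is not any single estimate but the organization: everything reduces to textbook ODE theory once one localizes to the slabs $[\eps,\infty)\times\R^2$, and the only genuinely non-routine step is \cref{it:invflowtransport}, where one must upgrade the $C^1$ dependence of $X$ in the position variable to $C^1$ dependence in the initial-time variable; this is handled by combining the flow identity \eqref{eq:semigroupprop}, the non-vanishing of the Jacobian from \eqref{eq:ODEjacobian}, and the inverse function theorem, after which the backward transport equation drops out by differentiating the flow identity.
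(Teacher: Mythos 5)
Your proposal is correct, and its skeleton coincides with the paper's: both treat the field slabwise on $[\eps,\infty)\times\R^2$, where \cref{ass:velocityflow} gives boundedness and a uniform spatial Lipschitz bound, invoke Cauchy--Lipschitz/Picard--Lindel\"of for \cref{it:ODEflow}, and obtain \cref{it:semigroupprop} (and the uniqueness of the mapping $X$ itself, which you leave implicit but which is immediate) from ODE uniqueness. The genuine difference is in \cref{it:ODEderivative}--\cref{it:invflowtransport}: the paper simply refers to Crippa's lecture notes (Sec.~1.3 and equality (1.15) there), whereas you give self-contained arguments -- the classical theorem on differentiable dependence for \cref{it:ODEderivative}, Jacobi's formula plus the explicit exponential representation of $J$ for \cref{it:ODEjacobian}, and, for \cref{it:invflowtransport}, an inverse-function-theorem argument on $(s,x)\mapsto(s,X(s,t_0,x))$ to transfer $C^1$ regularity from the position slot to the initial-time slot, followed by differentiating the flow identity \cref{eq:semigroupprop} in $s$ to produce the backward transport equation. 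That last step is sound (the Jacobian never vanishes by your formula for $J$, so the block-triangular differential is invertible, and the chain rule applies because joint $C^1$ regularity in $(s,x)$ has just been established); it is essentially a proof of the textbook identity $\partial_{t_0}X=-D_xX\cdot u(t_0,\cdot)$ that the paper imports by citation, so your version is more self-contained at the cost of length. One further small divergence: the paper additionally uses $u\in L^1_\loc([0,\infty);L^\infty)$ (via $W^{1,4}\hookrightarrow L^\infty$) to bound $\abs{X(t,s,x)}$ uniformly down to $t=0$, while you correctly observe that for the flow on $(0,\infty)^2\times\R^2$ as stated the slabwise $C_b$ hypotheses already suffice; both routes are valid.
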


\begin{proof}
    Since, for every $\eps > 0$,
    \begin{align}
        u \in C_b([\eps, \infty) \times \R^2;\R^2), \quad 
        \nabla u \in C_b([\eps, \infty) \times \R^2;\R^{2 \times 2}),
    \end{align}
    for every $t>0$ and every $x \in \R^2,$ it follows from the Cauchy-Lipschitz theorem that there are times $t_{-},t_{+} \in (0,\infty)$, with $t_{-}<t<t_{+}$, and that there is a unique $X(\cdot,s,x) \in C^1((t_{-},t_{+}))$ satisfying \cref{eq:ODEflow}. In the next step, we show that $t_{-}=0$. The case $t_{+}=\infty$ is similar. Note that, for every $t \in (t_{-},s)$,
    \begin{align*}
        X(t,s,x) = x + \int_s^t u(z,X(z,s,x)) \dd z,
    \end{align*}
    and we conclude that 
    \begin{align*}
        \abs{X(t,s,x)} \leq \abs{x} + \int^s_t \norm{u(z)}_{\infty} \dd z \leq \abs{x} + \int^s_0 \norm{u(z)}_{\infty} \dd z < \infty.
    \end{align*}
    Thus, there this no blow up for finite times since $u \in L^1_\loc([0,\infty);L^\infty)$. This shows \cref{it:ODEflow}. The property \cref{it:semigroupprop} is the direct consequence of the uniqueness of solutions of \cref{eq:ODEflow}. The statements \cref{it:ODEderivative} and \cref{it:ODEjacobian} were discussed in \cite[Sec. 1.3]{Crippa2008}. For the property \cref{it:invflowtransport}, see \cite[Equality (1.15)]{Crippa2008}.
    
    This shows that there is at least one mapping $X \colon (0,\infty)^2 \times \R^2 \to \R^2$ satisfying \cref{it:ODEflow}-\cref{it:invflowtransport}. Assume that $\widetilde{X} \colon (0,\infty)^2 \times \R^2 \to \R^2$ is mapping which satisfies \cref{it:ODEflow}-\cref{it:invflowtransport}. Then, due to the uniqueness of solutions of \cref{eq:ODEflow}, it follows immediately that, for every $s,t \in (0,\infty)$ and every $x \in \R^2$, $X(s,t,x) = \widetilde{X}(s,t,x)$. Hence, there is at most one mapping satisfying \cref{it:ODEflow}-\cref{it:invflowtransport} which completes the proof.
\end{proof}

\begin{proof}[Proof of \cref{prop:propflowII}]
    For every $n \in \N$ and every $t>0,$ we define
    \begin{align*}
        u_n(t) \coloneqq \varphi_n * u(t).
    \end{align*}
    It is easy to see that, for every $n \in \N$, $u_n$ satisfies \cref{ass:velocityflow}. Thus, by \cref{prop:propflowI}, there is a unique flow $X_n$ satisfying \cref{it:ODEflow}-\cref{it:invflowtransport} from \cref{prop:propflowI}. Furthermore, it follows immediately that $u_n \to u$ uniformly on every compact subset of $(0,\infty) \times \R^2.$ Recall also that, for every $0< \sigma \leq \tau$,
    \begin{align*}
        \norm{u_n}_{L^2([\sigma,\tau];W^{2,4})} \leq \norm{u}_{L^2([\sigma,\tau];W^{2,4})} \quad \text{ and } \quad \norm{u_n-u}_{L^2([\sigma,\tau];W^{2,4})} \underset{n \to \infty}{\longrightarrow} 0.
    \end{align*}
    We show that there is an $X \in C((0,\infty)^2 \times \R^2)$ such that, for every $\tau>\sigma>0$ and every compact $K \subset \R^2$,
    \begin{align*}
        X_n \to X \quad \text{in }C_\loc ([\sigma,\tau]^2 \times K) \quad \text{ and } \quad D X_n \to D X \quad \text{in }C ([\sigma,\tau]^2 \times \R^2).
    \end{align*}
    This suffices to deduce \cref{it:ODEflowII}, \cref{it:flowC1diffeoII} and \cref{it:LinftyboundflowII}. Then we discuss \cref{it:invflowtransportII} and \cref{it:solucontieq}. Since $X$ satisfies \cref{it:ODEflowII}, we deduce that for any other mapping $\widetilde{X}$ which satisfies \cref{it:ODEflowII}-\cref{it:solucontieq} coincides with $X.$ 

    We define 
    \begin{align*}
        H_l \coloneqq\{(s,t) \in (0,\infty)^2 : s \leq t \} \quad \text{ and } \quad H_u \coloneqq\{(s,t) \in (0,\infty)^2 : s \geq t \}
    \end{align*}
    First, we show that there is a $Y \in C((0,\infty)^2 \times \R^2)$ such that, for every $0<\sigma < \tau$,
    \begin{align}\label{eq:convergenceDXn}
        \sup_{(t,s) \in [\sigma,\tau]^2} \norm{DX_n(t,s)-Y(t,s)}_\infty \underset{n \to \infty}{\longrightarrow} 0.
    \end{align}
    From now on let $0 < \sigma < \tau$ be fixed. We start by showing that there is a constant $C = C(\sigma,\tau)>0$ such that, for every $n \in \N$,
    \begin{align}\label{eq:boundednessDXn}
        \sup_{(t,s) \in [\sigma,\tau]^2} \norm{DX_n(t,s)}_\infty \leq C.
    \end{align}
    Let for the moment $\abs{\cdot}$ denote the Frobenius norm on $\R^{2 \times 2}$\footnote{For a matrix $A \in \R^{m \times n}$, the Frobenius norm is defined as $\abs{A} = \left(\sum_{i=1}^m \sum_{j=1}^n a^2_{ij} \right)^{1/2}.$} and let 
    \begin{align*}
        S_l \coloneqq H_l \cap [\sigma,\tau]^2 \quad \text{ and } \quad S_u \coloneqq H_u \cap [\sigma,\tau]^2
    \end{align*}
    We show that \cref{eq:boundednessDXn} is true on $S_l$ and then we discuss the case $S_u.$ Integrating \cref{prop:propflowI} \cref{it:ODEderivative} in time, yields, for every $(t,s) \in S_l$ and every $x \in \R^2,$
    \begin{align*}
        \abs{DX_n(t,s,x)} \lesssim_{\sigma, \tau} 1 + \int_s^t \norm{\nabla u(z)}_\infty \abs{DX_n(z,s,x)} \dd z.
    \end{align*}
    Thus, by Grönwall's inequality and by taking the supremum over $(t,s)$ and $x$, we obtain, for every $n \in \N$,
    \begin{align*}
        \sup_{(t,s) \in S_u} \norm{DX_n(t,s)}_\infty \lesssim_{\sigma, \tau} \exp \left( \int_s^t \norm{\nabla u(z)}_\infty \dd z \right) \leq \exp \left( \int_s^t \norm{u(z)}_{W^{2,4}} \dd z \right).
    \end{align*}
    Since, for every $t,s >0$ and every $n \in \N$, $J_n(t,s) \equiv 1,$ we have that 
    \begin{align*}
        (DX_n(t,s,x))^{-1} = \text{adj}(DX_n(t,s,x))=
        \begin{pmatrix}
            (DX_n(t,s,x))_{22} & -(DX_n(t,s,x))_{12} \\
            -(DX_n(t,s,x))_{21} & (DX_n(t,s,x))_{11} \\
        \end{pmatrix}.
    \end{align*}
    In particular, we deduce that $\abs{DX_n(t,s,x)} = \abs{(DX_n(t,s,x))^{-1}}.$ Owing to the inverse mapping theorem, we have, for every $n \in \N$, every $(s,t) \in S_u$ and every $x,y \in \R^2$ with $X_n(t,s,x)=y,$
    \begin{align*}
        DX_n(s,t,y) = (DX_n(t,s,x))^{-1}.
    \end{align*}
    Using that $(t,s) \in S_l$ if and only if $(s,t) \in S_u$ and that $X_n(t,s)$ is surjective, we get, for every $n \in \N$,
    \begin{align*}
        \sup_{(s,t) \in S_l} \norm{DX_n(s,t)}_\infty = \sup_{(t,s) \in S_u} \norm{DX_n(t,s)}_\infty \lesssim_{\sigma, \tau} \exp \left( \int_s^t \norm{\nabla u(z)}_\infty \dd z \right).
    \end{align*}
    This shows \cref{eq:boundednessDXn}.

    Now, we show \cref{eq:convergenceDXn}. Using \cref{prop:propflowI} \cref{it:ODEderivative} and \cref{eq:boundednessDXn}, we have, for every $n,m \in \N$, every $(t,s) \in S_u$ and every $x \in \R^2$,
    \begin{align*}
        \abs{D X_n(t,s,x)-D X_m(t,s,x)}
        \leq & \int_s^t \norm{\nabla u_n(z)- \nabla u_m(z)}_\infty \abs{D X_n(t,s,x)} \dd z\\
        & + \int_s^t \norm{\nabla u_m(z)}_\infty \abs{D X_n(z,s,x)-D X_m(z,s,x)} \dd z\\
        \lesssim & \int_s^t \norm{u_n(z)-u_m(z)}_{W^{2,4}} \dd z \\
        &+ \int_s^t \norm{\nabla u(z)}_{W^{2,4}} \abs{D X_n(z,s,x)-D X_m(z,s,x)} \dd z.
    \end{align*}
    Again, with Gronwall's inequality, we deduce that, for every $n,m \in \N,$
    \begin{align*}
        \sup_{(t,s) \in S_u} \norm{DX_n(t,s)-DX_m(t,s)}_\infty \leq 
        \norm{u_n-u_m}_{L^1(s,t;W^{2,4})} \exp \left(\int_s^t \norm{u(z)}_{W^{2,4}} \dd z \right)
    \end{align*}
    Using the identity, for every $n,m \in \N$, every $t,s \in (0,\infty)$ and every $x \in \R^2$,
    \begin{align*}
        D X_n(t,s,x)-D X_m(t,s,x) = D X_n(t,s,x)(D X_n(s,t,x)-D X_m(s,t,x))D X_m(t,s,x)
    \end{align*}
    and what was done before, we conclude that, for every $n,m \in \N$,
    \begin{align*}
        \sup_{\sigma \leq t \leq s \leq \tau} \norm{DX_n(t,s)-DX_m(t,s)}_\infty \leq 
        \norm{u_n-u_m}_{L^1(s,t;W^{2,4})} \exp \left(3 \int_s^t \norm{u(z)}_{W^{2,4}} \dd z \right).
    \end{align*}
    Hence, there is a $Y \in C((0,\infty)^2 \times \R^2)$ and we have \cref{eq:convergenceDXn}.

    Next, we show that there is an $X \in (0,\infty)^2 \times \R^2$ such that, for every $0 < \sigma < \tau$ and every compact $K \subset \R^2$, $X_n \to X$ uniformly on $[\sigma,\tau]^2 \times K.$ Fix a compact $K \subset \R^2.$ We obtain with \cref{prop:propflowI} \cref{it:ODEflow} that, for every $n \in \N$, every $(t,s) \in [\sigma,\tau]^2 \cap S_u$ and every $x \in K$,  
    \begin{align*}
        \abs{X_n(t,s,x)} \leq \abs{x} + \int_s^t \abs{u_n(z,X_n(t,s,x))} \dd z \leq \abs{x} + \int_\sigma^\tau \norm{u(z)}_\infty \dd z.
    \end{align*}
    On the other hand, if $(t,s) \in [\sigma,\tau]^2 \cap S_u$, we can argue with \cref{prop:propflowI} \cref{it:invflowtransport} to infer a similar estimate. By applying Grönwall's inequality and taking the supremum, we see that there is a constant $C=C(K)>0$ such that, for every $n \in \N$,
    \begin{align*}
        \sup_{(t,s) \in [\sigma,\tau]^2 \times K} \norm {X_n(t,s,x)} \leq C.
    \end{align*}
    Moreover, with the same arguments as above, we get, for every $n,m \in \N$, 
    \begin{align*}
        \abs{X_n(t,s,x)-X_m(t,s,x)} \leq & \int_s^t \norm{u_n(z)-u_m(z)}_\infty \dd z\\
        & + \int_s^t \norm{\nabla u_m(z)}_\infty \abs{X_n(z,s,x)-X_m(z,s,x)} \dd z\\
        \leq & \int_s^t \norm{u_n(z)-u_m(z)}_{W^{2,4}} \dd z \\
        &+ \int_s^t \norm{u(z)}_{W^{2,4}} \abs{X_n(z,s,x)-X_m(z,s,x)} \dd z.
    \end{align*}
    Thus, by Grönwall's inequality, we obtain, for every $(t,s) \in [\sigma,\tau]$ and every $x \in K$,
    \begin{align*}
        \abs{X_n(t,s,x)-X_m(t,s,x)} \leq \norm{u_n-u_m}_{L^1(s,t;W^{2,4})} \exp \left( \int_s^t \norm{\nabla u(z)}_{W^{2,4}} \dd z \right)
    \end{align*}
    Hence, there is an $X \in C((0,\infty)^2 \times \R^2)$ such that $X_n \to X$ in $C_\loc((0,\infty)^2 \times \R^2).$ 
    
    Since $u_n \to u$ uniformly on every compact subset on $(0,\infty) \times \R^2$, it follows immediately that, for every $s \in (0,\infty)$ and every $x \in \R^2$, $X(\cdot,s,x) \in C^1((0,\infty))$ and that $X(\cdot,s,x)$ satisfies \cref{eq:ODEflowII}. This shows that $X$ satisfies \cref{it:ODEflowII}. Due \cref{prop:propflowI} and due the uniform convergence of $X_n$ to $X$, we deduce that, for every $(t,s,\tau) \in (0,\infty)^3$ and every $x \in \R^2$,
    \begin{align*}
        X(t,s,x) = \lim_{n \to \infty} X_n(t,s,x) = \lim_{n \to \infty} X_n(t,\tau,X_n(\tau,s,x)) = X(t,\tau,X(\tau,s,x)).
    \end{align*}
    We obtain the semi group property and, since, for every $(t,s) \in (0,\infty)^2$, $X(t,s)$ is continuously differentiable, we conclude that $X$ satisfies \cref{it:flowC1diffeoII}. The property \cref{it:LinftyboundflowII} follows from the fact that, for every $n \in \N$ and for every $0<s<t$,
    \begin{align*}
        \norm{DX_n(t,s)}_\infty \lesssim \exp \left(\int_s^t \norm{\nabla u_n(z)}_\infty \dd z\right) \leq \exp \left(\int_s^t \norm{\nabla u(z)}_\infty \dd z\right)
    \end{align*}
    and from the local uniform convergence of $(DX_n)_{n \in \N}$ to $DX.$ In order to show \cref{it:invflowtransportII}, recall that, for every $n \in \N$ and every $t>0$,
    \begin{align*}
        \partial_s X_n (t,s,x) = - (u_n(s,x) \cdot \nabla) X_n (t,s,x).
    \end{align*}
    Observe that the right-hand side, considered as function in $s$ and $x$, converges uniformly on compact sets of $(0,\infty) \times \R^2$ to $(u \cdot \nabla) X (t,\cdot,\cdot)$. This shows that $\partial_s X_n(t,\cdot,\cdot)$ converges locally uniformly to $\partial_s X(t,\cdot,\cdot)$ and we deduce that, for every $t>0$, $X(t,\cdot,\cdot) \in C^1((0,\infty) \times \R^2)$ and that
    \begin{align*}
        \partial_s X (t,s,x) + (u(s,x) \cdot \nabla) X (t,s,x) = 0.
    \end{align*}
    This shows \cref{it:invflowtransportII}. 

    Let us now discuss the proof of \cref{eq:solucontieq}. Let $f \in L^2_\loc((0,\infty)) \times \R^2) \cap L^\infty((0,\infty) \times \R^2)$ and assume that $\rho \in C_{w^*}([0,\infty);L^\infty(\R^2))$ satisfies in the sense of distributions
    \begin{align*}
        \partial_s \rho + \div(\rho u) = f.
    \end{align*}
    Fix $\tau > 0.$ We show that, for every $s \in (0,\tau]$ and almost every $x \in \R^2,$
    \begin{align*}
        \rho(s,x) = \rho(\tau,X(\tau,s,x)) + \int_\tau^s f(z,X)(z,s,x) \dd z.
    \end{align*}
    The case $s \in [\tau,\infty)$ works exactly the same. We define the operator
    \begin{align}
        S \colon C^\infty_c((0,\tau) \times \R^2) \to L^2(0,\tau;L^2(\R^2)),\quad 
        Sf(s,x) = \int_\tau^s f(z,X(z,\tau,x)) \dd z.
    \end{align}
    First we show that $S$ is bounded, then we can extend $S$ on $L^2(0,\tau;L^2(\R^2))$ by setting, for every $f \in L^2(0,\tau;L^2(\R^2))$, 
    \begin{align}\label{eq:extSbylim}
        Sf = \lim_{n \to \infty} Sf_n,
    \end{align}
    where $(f_n)_{n \in \N} \subset C^\infty_c((0,\tau) \times \R^2)$ is such that
    \begin{align*}
        f = \lim_{n \to \infty} f_n.
    \end{align*}
    To this end, let $f \in C^\infty_c((0,\tau) \times \R^2)$. Then we have, for every $s \in (0,\tau)$,
    \begin{align*}
        \norm{Sf(s)}_2 &= \left( \int_{\R^2} \left( \int_\tau^s f(z,X(s,z,x)) \dd z \right)^2 \dd x \right)^{1/2}\\
        &= \left( \int_{\R^2} \left( \int_s^\tau f(z,X(s,z,x)) \dd z \right)^2 \dd x \right)^{1/2}\\
        & \leq \int_s^\tau  \left( \int_{\R^2} (f(z,X(s,z,x)))^2 \dd x\right)^{1/2}\dd z\\
        & \leq \int_s^\tau  \left( \int_{\R^2} (f(z,x))^2 \dd x \right)^{1/2} \dd z\\
        & \leq \tau^{3/4} \norm{f}_2
    \end{align*}  
    In the third step, we used \cref{lem:minkow} and in the fourth step, we used, for every $(z,\tau)$, the change of coordinates $x \mapsto X(z,s).$ The $L^2$-norm is invariant under this change of coordinates since $X(z,s)$ is $C^1$ diffeomorphism with $J(z,s) = \det DX(z,s) \equiv 1$.
    
    We see that, for every $f \in C^\infty_c((0,\tau) \times \R^2)$,
    \begin{align*}
        \norm{Sf}_2 \leq \sup_{s \in [0,\tau]} \norm{Sf(s)}_2 \leq \tau^{3/4} \norm{f}_2
    \end{align*}
    We deduce that $S$ can extended to an operator on $L^2(0,\tau;L^2(\R^2))$ via \cref{eq:extSbylim}. Furthermore, by a similar calculation as above, we see that if additionally $f \in L^\infty((0,\tau) \times \R^2)$, then
    \begin{align*}
        \sup_{s \in (0,\tau)} \norm{S f(s)}_\infty \leq C(\tau) \norm{f}_\infty.
    \end{align*}

    Let now $\rho \in L^\infty(\R^2)$ and $f \in L^2(0,\tau;L^2(\R^2))$. Then, $\rho$ be as in \cref{prop:propflowII} \cref{it:solucontieq}. Then, $\rho$ is a weak solution of 
    \begin{align}\label{eq:backwardtransportproof}
        \begin{cases}
            \partial \rho + \div(\rho u) = f, \quad \tau<s \\
            \rho(s) = \rho_s.
        \end{cases}
    \end{align}
    We show that $\widetilde{\rho}$, defined, for every $(s,x) \in (0,\tau] \times \R^2$, by
    \begin{align*}
        \widetilde{\rho}(s,x) \coloneqq \rho(\tau,X(\tau,s,x)) + \int_\tau^s f(z,X(s,z,x)) \dd z
    \end{align*}
    is a weak solution of \cref{eq:backwardtransportproof}.

    To this end, let, for every $n \in \N$,
    \begin{align*}
        \rho_n(\tau,x) \coloneqq (\varphi_n * \rho) (x).
    \end{align*}
    Note that $\rho_n(\tau,x) \to \rho(\tau,x)$ for almost every $x \in \R^2$ as $n \to \infty$ and that, for every $n \in \N$,
    \begin{align*}
        \norm{\rho_n}_\infty \leq \norm{\rho}_\infty.
    \end{align*}
    It follows immediately that, for every $s \in (0,\tau)$ and almost every $x \in \R^2$,
    \begin{align*}
        \rho_n(X(\tau,s,x),x) \to \rho(X(\tau,s,x),x).
    \end{align*}
    Let $(f_n)_{n \in \N} \subset C^\infty_c((0,\tau) \times \R^2)$ be a sequence such that
    \begin{itemize}
        \item $f_n \to f$ in $L^2(0,\tau;L^2(\R^2))$,
        \item $f_n \to f$ for almost every $(s,x) \in (0,\tau) \times \R^2$,
        \item $\norm{f_n}_\infty \leq \norm{f}_\infty$ for every $n \in \N.$
    \end{itemize}
    Then we have that 
    \begin{align*}
        S f_n \underset{n \to \infty}{\longrightarrow} S f \quad \text{ in } \quad L^2(0,\tau;L^2(\R^2)) \quad \text{ and } \quad \norm{S f}_\infty \leq C \norm{f}_\infty.
    \end{align*}
    Set, for every $n \in \N$, every $s \in (0,\tau]$ and every $x \in \R^2,$
    \begin{align*}
        \widetilde{\rho}_n(s,x) \coloneqq \rho_n(\tau,X(\tau,s,x)) + Sf_n(s,x) =  \rho_n(\tau,X(\tau,s,x)) + \int_\tau^s f_n(z,X(s,z,x)) \dd z
    \end{align*}
    Since $X(\tau,\cdot,\cdot) \in C^1((0,\tau] \times \R^2)$ by \cref{prop:propflowII} \cref{it:invflowtransportII}, we get with the chain rule
    \begin{align*}
        \partial_t \widetilde{\rho}_n + u \cdot \nabla \widetilde{\rho}_n = f_n
    \end{align*}
    Using that $u$ is divergence free, we get that, for every $\phi \in C^\infty((0,\tau] \times \R^2)$,
    \begin{align}\label{eq:weakformphin}
        \int_0^\tau \int_{\R^2} \widetilde{\rho}_n \partial_s \phi + \widetilde{\rho}_n u \cdot \nabla \phi \dd x \dd s = \int_{\R^2} \widetilde{\rho}_n(\tau) \phi(\tau) \dd x + \int_0^\tau \int_{\R^2} f_n \phi \dd x \dd s
    \end{align}
    and that, for every $s \in (0,\tau]$ and every $\psi \in C^\infty_c(\R^2),$
    \begin{align}\label{eq:weakformpsin}
        \int_{\R^2} \widetilde{\rho}_n(\tau) \psi \dd x - \int_{\R^2} \widetilde{\rho}_n(s) \psi \dd x = \int_0^\tau \int_{\R^2} \widetilde{\rho}_n u \cdot \nabla \psi \dd x \dd s - \int_0^\tau \int_{\R^2} f_n \psi \dd x \dd s
    \end{align}
    By passing to the limit $n \to \infty$, we deduce that, for every $\phi \in C^\infty((0,\tau] \times \R^2)$,
    \begin{align}\label{eq:weakformphi}
        \int_0^\tau \int_{\R^2} \widetilde{\rho} \partial_s \phi + \widetilde{\rho} u \cdot \nabla \phi \dd x \dd s = \int_{\R^2} \widetilde{\rho}(\tau) \phi(\tau) \dd x + \int_0^\tau \int_{\R^2} f \phi \dd x \dd s
    \end{align}
    and that, for every $s \in (0,\tau]$ and every $\psi \in C^\infty_c(\R^2),$
    \begin{align}\label{eq:weakformpsi}
        \int_{\R^2} \widetilde{\rho}(\tau) \psi \dd x - \int_{\R^2} \widetilde{\rho}(s) \psi \dd x = \int_0^\tau \int_{\R^2} \widetilde{\rho} u \cdot \nabla \psi \dd x \dd s - \int_0^\tau \int_{\R^2} f \psi \dd x \dd s
    \end{align}
    From \cref{eq:weakformphi}, we see that $\widetilde{\rho}$ is a weak solution of \cref{eq:backwardtransportproof}, and by combining \cref{eq:weakformpsi} with the fact that $\widetilde{\rho} \in L^\infty((0,\tau) \times \R^2)$ we deduce that 
    \begin{align*}
        \widetilde{\rho} \in C_{w^*} ((0,\tau];L^\infty(\R^2)).
    \end{align*}
    Now we have that in the sense of distributions
    \begin{align*}
        \begin{cases}
            \partial (\rho - \widetilde{\rho}) + \div((\rho - \widetilde{\rho}) u) = 0, \quad \tau<s \\
            \rho(s) - \widetilde{\rho}(s) = 0.
        \end{cases}
    \end{align*}
    With \cref{prop:wellposedtrans}, we conclude that, $\widetilde{\rho} = \rho$, \ie, for every $s \in (0,\tau]$ and almost every $x \in \R^2,$
    \begin{align*}
        \rho(s,x) = \widetilde{\rho}(s,x) = \rho(\tau,X(\tau,s,x)) + \int_\tau^s f(z,X(s,z,x)) \dd z,
    \end{align*}
    which completes the proof.
\end{proof}

\end{appendix}

\section*{Acknowledgments}
The author is supported by the Deutsche Forschungsgemeinschaft (DFG) via the project ``Inhomogeneous and compressible fluids: statistical solutions and dissipative anomalies'' within the SPP 2410 ``Hyperbolic Balance Laws in Fluid Mechanics: Complexity, Scales, Randomness'' (CoScaRa).

Part of this work was done while the author was visiting the Institut de Mathématiques de Toulouse. The author is grateful the kind hospitality of its members. The visit was supported by CIMI Labex.

The author is indebted to the anonymous referees for their valuable suggestions and comments on the manuscript. The author thanks Timothée Crin-Barat, Gianluca Crippa, Raphaël Danchin and Emil Wiedemann for some helpful conversations.

\medbreak
\noindent \textbf{Data availability statement} \,
 Data sharing not applicable to this article as no data sets were generated or analyzed during the current study.
 
\medbreak
\noindent \textbf{Conflicts of interest} The authors have no competing interests to declare that are relevant to the content of this
article.

\vspace{3mm}
\bibliographystyle{abbrv}
\bibliography{Ns-ref.bib}

\vfill 

\end{document}